\documentclass[final,onefignum,onetabnum]{siamonline220329}



\usepackage[utf8]{inputenc} 
\usepackage[T1]{fontenc}    
\usepackage{lipsum}
\usepackage{hyperref}       
\usepackage{url}            
\usepackage{booktabs}       
\usepackage{amsfonts}       
\usepackage{nicefrac}       
\usepackage{microtype}      
\usepackage{graphicx}
\usepackage[numbers]{natbib}
\usepackage{doi}
\usepackage{graphicx}
\usepackage{array}
\usepackage{amssymb}
\usepackage{cases} 
\usepackage{enumerate}
\usepackage{enumitem}
\usepackage{bbm}
\usepackage{float}
\usepackage{mathtools}
\usepackage{setspace}
\usepackage{natbib}
\usepackage{newtxmath}
\usepackage{mathtools}
\mathtoolsset{showonlyrefs = false}
\usepackage{cases} 

\usepackage{doi}
\usepackage{cases}
\usepackage{empheq}
\usepackage{caption}
\usepackage{algorithm}
\usepackage[noend]{algpseudocode}

\usepackage{subcaption}

\ifpdf
  \DeclareGraphicsExtensions{.eps,.pdf,.png,.jpg}
\else
  \DeclareGraphicsExtensions{.eps}
\fi

\usepackage{enumitem}
\setlist[enumerate]{leftmargin=.5in}
\setlist[itemize]{leftmargin=.5in}


\newsiamremark{remark}{Remark}
\newsiamremark{hypothesis}{Hypothesis}
\crefname{hypothesis}{Hypothesis}{Hypotheses}
\newsiamthm{claim}{Claim}
\newsiamthm{assumption}{Assumption}

\headers{An adaptively inexact first-order method for bilevel optimization}{M. S. Salehi, S. Mukherjee, L. Roberts, M. J. Ehrhardt}

\title{An adaptively inexact first-order method for bilevel optimization with application to hyperparameter learning \thanks{Submitted to the editors DATE. 
\funding{The work of Mohammad Sadegh Salehi was supported by a scholarship from the EPSRC Centre for Doctoral Training in Statistical Applied Mathematics at Bath (SAMBa), under the project EP/S022945/1. Matthias J. Ehrhardt acknowledges support from the EPSRC (EP/S026045/1, EP/T026693/1, EP/V026259/1). Lindon Roberts is supported by the Australian Research Council Discovery Early Career Award DE240100006.}}}

\author{Mohammad Sadegh Salehi\thanks{Department of Mathematical Sciences, University of Bath, Bath, BA2 7AY, UK (\email{mss226@bath.ac.uk}).}\and Subhadip Mukherjee\thanks{Department of Electronics \& Electrical Communication Engineering, Indian Institute of Technology (IIT), Kharagpur, India (\email{smukherjee@ece.iitkgp.ac.in}).} \and Lindon Roberts\thanks{School of Mathematics and Statistics, University of Sydney, Camperdown NSW 2006, Australia (\email{lindon.roberts@sydney.edu.au)}.} \and Matthias J. Ehrhardt \thanks{Department of Mathematical Sciences, University of Bath, Bath, BA2 7AY, UK (\email{m.ehrhardt@bath.ac.uk})}.}

\usepackage{amsopn}


\ifpdf
\hypersetup{
  pdftitle={An adaptively inexact first-order method for bilevel optimization with application to hyperparameter learning},
  pdfauthor={Mohammad Sadegh Salehi, Subhadip Mukherjee, Lindon Roberts, and Matthias~J.~Ehrhardt}
}
\fi


\externaldocument[][nocite]{ex_supplement}

\newcommand{\rv}[2][final]{%
  \ifthenelse{\equal{#1}{revision}}%
    {\textcolor{blue}{#2}}
    {#2}
}

\newcommand{\rvv}[2][final]{%
  \ifthenelse{\equal{#1}{revision}}%
    {\textcolor{blue}{#2}}
    {#2}
}

\begin{document}

\maketitle

\begin{abstract}
Various tasks in data science are modeled utilizing the variational regularization approach, where manually selecting regularization parameters presents a challenge. The difficulty gets exacerbated when employing regularizers involving a large number of hyperparameters. To overcome this challenge, bilevel learning can be employed to learn such parameters from data. However, neither exact function values nor exact gradients with respect to the hyperparameters are attainable, necessitating methods that only rely on inexact evaluation of such quantities. State-of-the-art inexact gradient-based methods a priori select a sequence of the required accuracies and cannot identify an appropriate step size since the Lipschitz constant of the hypergradient is unknown. 
In this work, we propose an algorithm with backtracking line search that only relies on inexact function evaluations and hypergradients and show convergence to a stationary point. Furthermore, the proposed algorithm determines the required accuracy dynamically rather than manually selected before running it. 
Our numerical experiments demonstrate the efficiency and feasibility of our approach for hyperparameter estimation on a range of relevant problems in imaging and data science such as total variation and field of experts denoising and multinomial logistic regression. Particularly, the results show that the algorithm is robust to its own hyperparameters such as the initial accuracies and step size.
\end{abstract}

\begin{keywords}
Bilevel Optimization, Bilevel Learning, Hyperparameter Optimization, Backtracking Line Search, Machine Learning.
\end{keywords}

\begin{MSCcodes}
65K10, 90C25, 90C26, 90C06, 90C31, 94A08
\end{MSCcodes}
\section{Introduction}
In this work, we consider bilevel learning, where the hyperparameter learning problem is formulated as a bilevel optimization problem \cite{Crockett_2022,Reyes2023} where our goal is to solve
\begin{subequations}\label{GENERAL}
    \begin{align}
        \min_{\theta \in \mathbb{R}^d} \Bigl\{f(\theta) &\coloneqq \frac{1}{m}\sum_{i=1}^{m}g_i(\hat{x}_i(\theta)) +r(\theta)\Bigr\}\label{UPPER_LEVEL}\\ s.t. \quad
         {\hat{x}_i(\theta)} &\coloneqq \arg\min_{x\in \mathbb{R}^n} h_i(x,\theta), \quad i = 1,\dots, m.
         \label{LOWER_LEVEL}
    \end{align}
\end{subequations}
The upper-level functions $g_i$ are a measure of quality for the solution of the lower-level problem~\eqref{LOWER_LEVEL}. For example, in supervised learning, they may take the form $g_i(x)= \|x - x^*_i\|^2$ where $x^*_i$ is the desired solution of the lower-level problem.

Problems of the form \eqref{GENERAL} arise in many areas of data science, for example, when the task at hand is modeled using variational regularization approaches, which are commonly used in the fields of image reconstruction, image processing and machine vision. For instance, one can learn the weights of the regularizer in regression \cite{Pedregosa}, or the parameters of the regularizer~\cite{Kunisch2013ABO,Yunjin_Chen_2014,Ochs} for tasks such as image denoising, deblurring, inpainting, segmentation, and single-image super-resolution. Moreover, sophisticated data-adaptive regularizers can have millions of parameters; examples include input-convex neural networks \cite{InputConvex,amos2017input} and convex-ridge regularizer neural networks \cite{goujon2022neuralnetworkbased}. However, currently, neither of these data-adaptive regularizers are learned by bilevel learning due to computational difficulties. The relevance of bilevel learning in variational regularization methods is not restricted to regularizers. It can also be employed to learn the parameters in analytical deep priors \cite{Dittmer_2019, Arndt_2022}, and the sampling operator in magnetic resonance imaging (MRI) \cite{sherry2020learning} and seismic imaging \cite{downing2023optimisation}. These tasks can also involve a large number of parameters; for example, the authors in \cite{sherry2020learning} demonstrated the use of more than a million parameters. Furthermore, based on the theoretical approach introduced in \cite{Holler_graph}, bilevel learning can be employed to learn the optimal structure of the regularizer.

To address the bilevel learning problem \eqref{GENERAL}, one can employ model-free approaches like grid search and random search \cite{randomsearch} when dealing with a limited set of hyperparameters and a small, bounded hyperparameter space. Moving beyond the capabilities of model-free approaches, model-based approaches such as Bayesian methods \cite{bayesian} and derivative-free optimization \cite{cartis2017derivativefree,DFOLS1,DFO} prove to be efficient in solving~\eqref{GENERAL} when a small number of hyperparameters are involved. However, here we are interested in computationally scalable algorithms that can solve bilevel learning problems potentially with millions of parameters, thus, in this work, we consider gradient-based methods. In the context of the gradient-based approach, firstly, a first-order or quasi-Newton algorithm \cite{Yunjin_Chen_2014,maclaurin2015gradientbased} is employed to solve the lower-level problem \eqref{LOWER_LEVEL}. Then, gradient descent is applied to the upper-level problem \eqref{UPPER_LEVEL} after computing the gradient $\nabla f(\theta)$ (also referred to as the \textit{hypergradient}) with respect to parameters $\theta$.

Under certain regularity assumptions, when the problem is sufficiently smooth \cite{DFO, Pedregosa}, the hypergradient can be calculated by utilizing the implicit function theorem (IFT) \cite{DFO} or by using automatic differentiation (AD) \cite{AD}. Another type of algorithm called piggyback \cite{piggyback} utilizes AD to compute the hypergradient, and they can differ in the required assumptions on the lower-level problem \eqref{LOWER_LEVEL}. This approach is favorable for nonsmooth lower-level problems. The works~\cite{Piggy_chambolle,Piggy_nonsmooth_bolte} provided an extensive analysis of this method, where \cite{Piggy_nonsmooth_bolte} considered approximating the hypergradient under the usage of different splitting methods, and \cite{Piggy_chambolle} studied the convergence of piggyback AD when the lower-level problem is a saddle point problem and employed primal-dual algorithms for solving it. As an application, the authors in \cite{bubba2023bilevel} applied piggyback AD to learn discretization of total variation.
%
Quasi-Newton algorithms \cite{Nocedal2006Numerical} are used in \cite{ramzi2023shine} for solving the lower-level problem and utilized their approximation of the inverse of the Hessian of the lower-level objective to replace \rv{linear solvers like conjugate gradient (CG)}.

In practice, due to the usage of numerical solvers and the large-scale nature of the problems of interest, computing the exact hypergradient is not feasible \cite{boundMatthias}; hence, it should be approximated with an accuracy that leads to optimization with inexact gradients in the upper-level problem~\eqref{UPPER_LEVEL}. Computing an approximate hypergradient using the IFT approach requires solving a linear system and solving it up to a tolerance using CG. The first error-bound analysis of the approximate gradient computed in this approach was done in \cite{Pedregosa} and improved in \cite{Zucchet_2022}. \rv{In \cite{ghadimi2018approximation}, convergence rates for bilevel problems with approximate hypergradients were studied using a pre-specified number of lower-level iterations. Improved rates were later presented in \cite{bilevel_complexity_warmstart} by fixing the number of lower-level iterations and using warm-start.} \rvv{Numerous other works (see \cite{TTSA,JMLR:stochastic} and references therein) discuss convergence rates for solving bilevel problems in the stochastic setting.} \rv{In \cite{grazzi20a}, an iteration complexity analysis and a priori error bounds for the approximate hypergradient and all sub-problems were introduced by separately considering IFT + CG and AD approaches and reformulating the lower-level problem as a fixed-point problem.} Subsequently, the error bound analysis of IFT + CG was further extended by providing computable a priori and a posteriori bounds in \cite{boundMatthias}. On the other hand, the error-bound analysis of approximating the hypergradient using AD was studied in \cite{AD}. Furthermore, the authors in \cite{boundMatthias} proposed a unified perspective in which AD can be seen as equivalent to IFT+CG.
\subsection{Challenges}
\paragraph{Required accuracy for hypergradient approximation} Although much work has been done on different methods of finding the hypergradient and its error bound, convergence theory for hyperparameter optimization with an inexact hypergradient has often assumed the presence of an accurate approximate hypergradient. However, in practice, approximating the hypergradient to a high degree of accuracy results in prohibitively high computational costs. Moreover, empirical evidence, as demonstrated in studies such as \cite{boundMatthias}, indicates that even when the accuracy is not very high or strictly increasing under a priori assumptions like summability, inexact gradient descent on the upper-level problem can still yield progress. As elevating accuracy necessitates additional computations, implementing a dynamic strategy for selecting the required accuracy, allowing it to adjust according to the demands of the optimization, opens the possibility of reducing the overall computational cost and increase the efficiency of the optimization process.

\paragraph{Selecting upper-level step sizes}
The convergence of inexact gradient descent on the upper-level problem has primarily been studied under the assumption of a sufficiently small fixed step size \cite{Pedregosa}. However, due to the unavailability of a closed-form solution for the lower-level problem \eqref{LOWER_LEVEL}, estimating the Lipschitz constant of the hypergradient and consequently employing a method with the largest possible fixed step size is unrealistic. While line search approaches such as backtracking may seem plausible to address this issue, they typically require evaluating the exact upper-level objective value, which is unattainable since only an approximate lower-level solution is available. Additionally, line search methods like the Armijo rule \cite{Nocedal2006Numerical} necessitate the approximated hypergradient to be a descent direction, which is not obvious. 


\subsection{Existing work and contributions}
The IFT+CG approach for approximating the hypergradient, given a decreasing sequence of accuracies, was initially utilized in the hyperparameter optimization approximate gradient (HOAG) algorithm \cite{Pedregosa}. In this algorithm, convergence of the inexact gradient descent in the gradient mapping was demonstrated under the conditions of having an inexact gradient with a summable error bound and a sufficiently small step size \cite{Pedregosa}. While HOAG offered improved computational efficiency compared to methods relying on highly accurate approximate hypergradients, the assumption of summability may require more lower-level computations than necessary. Moreover, as discussed regarding the challenge of finding a suitable small enough fixed step size, these assumptions could potentially limit the practical performance of the method. In addressing the challenge of determining a suitable step size for inexact gradient descent in the upper-level, the author in \cite{Pedregosa} proposed a heuristic line search in numerical experiments. However, this line search lacked any convergence guarantee. Building on this, an accelerated version with a restarting scheme was later introduced in \cite{yang2023accelerating} to improve the performance. \rv{Additionally, a bilevel method with adaptive step sizes was presented in \cite{Huang2021BiAdamFA}. This method, which employs a fixed number of lower-level solver steps, falls within the category of stochastic bilevel methods.}

In \cite{valkonen_single_step}, the authors introduced an IFT-based algorithm that performs only a single step to solve the lower-level problem. The algorithm utilizes CG with high accuracy or analytic exact inversion of the Hessian of the lower-level problem. \rv{Another class of single-level methods, which replace the lower-level problem with a so-called \textit{value function} \cite{dempe_bilevel_2020}, are the fully first-order methods \cite{bome,fullyFirstOrderStochastic,penaltyMethodsNonconvexBO}. These methods reformulate the bilevel problem as a constrained optimization problem, eliminating the need for second-order differentiation. They use a fixed number of iterations to approximate the lower-level solution and require careful tuning of step sizes and penalty multipliers. The majority of works in these approaches primarily consider stochastic bilevel problems.}

Moreover, some recent work has developed line search methods for single-level problems where the objective function value is computed with errors and the estimated gradient is inexact \cite{LineSearchNoise_deterministic} and potentially stochastic \cite{LineSearchNoise_stochastic}. However, in these methods, the error is bounded but irreducible and not controllable. \rv{Additionally, two other stochastic line search methods were proposed in \cite{Painless_Stochastic_linesearch} and \cite{non_monotone_stochastic_linesearch}. The method in \cite{Painless_Stochastic_linesearch} was later applied within a stochastic bilevel framework in \cite{Auto-tune_stochastic_bi}. Meanwhile, \cite{non_monotone_stochastic_linesearch} addresses the issue of overly small step sizes in \cite{Painless_Stochastic_linesearch} by relaxing the monotonic decrease condition, thus providing a non-monotone stochastic line search method for single-level problems.}
    
    An existing adaptive and inexact framework for bilevel learning, known as DFO-LS \cite{DFO}, operates by solving the lower-level problem up to a specified tolerance. Subsequently, leveraging an approximate lower-level solution, the framework utilizes a derivative-free algorithm \cite{DFOLS1} tailored for solving nonlinear least-squares to update the parameters in the upper-level. This method dynamically determines the necessary accuracy for solving the lower-level problem by considering the trust-region radius of the derivative-free algorithm, ensuring that the upper-level solver can progress effectively. While this approach shows promise in tasks such as determining the parameters of total variation denoising, it faces scalability challenges as the number of parameters increases \cite{Crockett_2022}. 
    
    In this work, we propose a verifiable backtracking line search scheme in \cref{sec:bt_linesearch} that relies solely on inexact function evaluations and the inexact hypergradient. This scheme guarantees sufficient decrease in the exact upper-level objective function. We also establish the required conditions for finding a valid step size using this line search method in \cref{sec:step_existence} and prove the convergence of inexact gradient descent in the upper-level objective function when employing our inexact line search in \cref{sec:convergence}.
    \rv{As part of our line search scheme, we analyze the sufficient conditions to ensure that the hypergradient is a descent direction in \cref{sec:descent_direction}. In practice, we use the the posteriori error bound for the approximate hypergradient from \cite{boundMatthias}, which will be reviewed in \cref{sec:background}, to verify these conditions in numerical settings.}
    
    We present a practical algorithm in \cref{sec:Theory_algorithm} that connects all of our theoretical results, determining the required accuracy for the inexact hypergradient to be considered a descent direction and the inexact line search (sufficient decrease condition) to be held, adaptively. As a result, our algorithm provides a robust and efficient method for bilevel learning. Furthermore, as the numerical experiments in \cref{sec:numerical_experiments} demonstrate, our approach outperforms state-of-the-art methods such as HOAG \cite{Pedregosa} and DFO-LS \cite{DFO} in solving problems like multinomial logistic regression and variational image denoising, respectively.
\subsection{Notation}
We denote the parameters as $\theta \in \Theta \subseteq \mathbb{R}^d$, the $k$-th iterate of parameters as $\theta_k$, and each element of the vector of parameters as $\theta[i]$, where $1\leq i \leq d$. The notation $\|\cdot\|$ represents the Euclidean norm of vectors and the 2-norm of matrices. The gradient of the lower-level objective function $h_i: \mathbb{R}^n \times \mathbb{R}^d \rightarrow \mathbb{R}$ with respect to $x$ is expressed by $\nabla_x h_i$. Throughout the paper, we denote the minimizer of $h_i$ w.r.t.\ $x$ for a fixed $\theta$ by $\hat{x}_i$ and the approximation of it by $\tilde{x}_i$. Additionally, the Hessian of $h_i$ with respect to $x$ is represented as $\nabla^2_x h_i$ and the Jacobian of the lower-level objective $h_i$ is denoted as $\nabla^2_{x\theta} h_i$. Moreover, $\partial\hat{x}_i: \mathbb{R}^d \rightarrow \mathbb{R}^n \times \mathbb{R}^d$ stands for the derivative of $\theta \mapsto \hat{x}_i(\theta)$ with respect to $\theta$.
\section{Background}\label{sec:background}
For ease of notation, we will only consider one data point \rv{in \eqref{generalBilevel}} and the index of the data points $i$ is omitted in the following discussion. Similarly, we do not consider the regularizer $r$ \rv{in \eqref{UPPER_LEVEL}}. Thus, our general bilevel learning problem we will study in the main part of the paper takes the form:
\begin{subequations}\label{generalBilevel}
    \begin{align}
        \min_{\theta \in \mathbb{R}^d} \{f(\theta) &\coloneqq g(\hat{x}(\theta))\}\label{uppergeneral}\\
         \rvv{s.t. \quad} {\hat{x}(\theta)} &\coloneqq \arg\min_{x\in \mathbb{R}^n} h(x,\theta).\label{lowergeneral}
    \end{align}
\end{subequations}
Now, we consider the following assumptions.
\begin{assumption}\label{ass1}
\rv{
We make the following assumptions on the lower level loss $h$:
\begin{enumerate}[label = \Roman *.]
    \item There exist $\mu(\theta), \ L(\theta) \in \mathbb{R}$, $0<\mu(\theta)\leq L(\theta)$ such that $\mu(\theta)I \preceq \nabla_{x}^2h(x,\theta) \preceq L(\theta) I$. Moreover, $\nabla_xh$ and $\nabla_{x}^2h$ are continuous in $\theta$.\label{assumption1.1}
    \item $\nabla_x^2h(x,\theta)$ is $L_H$ Lipschitz continuous in $x$ and $\nabla_{x\theta}^2h(x,\theta)$ is $L_J$ Lipschitz in $x$ uniformly for all $\theta$.\label{assumption1.2}
\end{enumerate}
}
\end{assumption}
\begin{assumption}
\rv{
    \label{ass2}
    The function $f$ is $L_{\nabla f}$-smooth and $g$ is $L_{\nabla g}$-smooth, which means $f$ and $g$ are continuously differentiable with $L_{\nabla f}$ and $L_{\nabla g}$ Lipschitz gradients, respectively. Moreover, $L_{\nabla f} > 0$ and $L_{\nabla g} > 0$, and the upper-level loss $g$ is bounded below.}
\end{assumption}

\begin{remark}\label{rk:different_UL_loss}
\rv{
     \cref{ass1} \ref{assumption1.1} implies that the lower-level objective function is strongly convex. \cref{ass1} \ref{assumption1.2} will be needed solely in \cref{errorBound}, which we utilize in the practical part of our work and not in our analysis. Our assumptions on the upper-level loss $g$ in \cref{ass2} encompass commonly used loss functions such as the \textit{mean-squared error} (MSE) and \textit{multinomial logistic loss} \cite{pml1Book}, as well as popular non-convex losses, including the \textit{peak signal-to-noise ratio} (PSNR), which is related to MSE, the \textit{structural similarity index measure} (SSIM) \cite{Crockett_2022}, and the \textit{biweight loss} \cite{Tuckey,Carmon2017ConvexUP}. Our method and theory considers non-convex $g$, but remain applicable—and can be improved with tighter bounds—for a convex  $g$, with slight modifications to the proofs and the line search conditions we derive.}
\end{remark}

\begin{remark}\label{about_smoothness}
    \rvv{Note that having $\mu(\theta)$ and $L(\theta)$ uniformly bounded in \cref{ass1} \ref{assumption1.1} (i.e., there exist $0<\mu \leq L<\infty$ such that $\mu \leq \mu(\theta)\leq L(\theta) \leq L$)  is sufficient to ensure that $f$ is $L_{\nabla f}$-smooth, but it is not necessary (e.g., when $\nabla g(x) = 0$). Moreover, the existence of  $L_{\nabla f}$  is required for our convergence analysis, but we do not use its value in our algorithm or practical applications.
    }
\end{remark}
The gradient of $f$ takes the form below 
\begin{equation}
    \nabla f(\theta) = \nabla (g \circ \hat{x})(\theta)= \partial \hat{x}(\theta)^T \nabla g(\hat{x}(\theta)).
\end{equation}
For all $x\in \mathbb{R}^n$, under \cref{ass1} and with the use of the implicit function theorem \cite{IFT_Benjio}, we can state 
$$\partial \hat{x}(\theta)^T \coloneqq  -\nabla_{x\theta}^2h(x,\theta)^T\nabla_x^2h(x,\theta)^{-1}.$$ 

Based on \cref{ass1} and \cref{ass2}, we have the following useful inequalities, which we will use in later sections.
   As stated in \cite[Lemma 1.2.3]{NestrovBook}, if  $f:\mathbb{R}^d \rightarrow \mathbb{R}$ is a $L_{\nabla f}$-smooth function, then for any $x,y\in\mathbb{R}^d$ we have
  \begin{equation}\label{descent_lemma}
      f(y) \leq f(x) +  \nabla f(x)^T( y-x ) +\frac{L_{\nabla f}}{2} \|y - x\|^2.
  \end{equation}
Note that (see e.g.\ \cite{NestrovBook}) if $f:\mathbb{R}^d \rightarrow \mathbb{R}$ is convex and smooth, then for any $x,y\in\mathbb{R}^d$ we have
  \begin{equation}\label{convexity}
      f(y)  \geq  f(x) +  \nabla f(x)^T( y-x ).
  \end{equation}
At each iteration $k\in \mathbb{N}_0$, as finding $  {\hat{x}(\theta_k)}$ exactly is not feasible due to the usage of numerical solvers, we solve \eqref{lowergeneral} inexactly with tolerance $\epsilon_k$ to find    {$\Tilde{x}(\theta_k)$} such that $\|   {\Tilde{x}(\theta_k)} -   {\hat{x}(\theta_k)}\| \leq \epsilon_k$ using the a posteriori bound $\|\nabla_xh(   {\Tilde{x}(\theta_k)},\theta_k)\|\leq \epsilon_k \mu(\theta_k)$. This a posteriori bound is derived from the strong convexity of the lower-level objective function, as explained in \cite{DFO}. Also, instead of computing $\nabla_x^2h(   {\Tilde{x}(\theta_k)},\theta_k)^{-1}\nabla g(\tilde{x}(\theta_k))$, we solve the linear system $\nabla_x^2h(\Tilde{x}(\theta_k),\theta_k)q_k = \nabla g(\Tilde{x}(\theta_k))$ with residual $\delta_k$. Then, we denote the inexact gradient (hypergradient) of the upper-level problem \eqref{uppergeneral} at each iteration $k$ by $z_k \coloneqq -\nabla_{x \theta}h(   {\Tilde{x}(\theta_k)},\theta_k)^Tq_k$ and the corresponding error by $e_k \coloneqq z_k - \nabla f(\theta_k)$.

The following theorem provides a computable a posteriori bound for the error of the approximate hypergradient, which we utilize in the next section.
\begin{theorem}
\label{errorBound}
(\cite[Theorem 10]{boundMatthias}) Suppose Assumption \ref{ass1} hold. Let us define
$$c(x(\theta)) \coloneqq \frac{L_{\nabla_g} \|\nabla_{x\theta}^2h(x(\theta),\theta)\|}{\mu\rv{(\theta)}}+L_{H^{-1}}\|\nabla g(x(\theta))\|\|\nabla_{x\theta}^2h(x(\theta),\theta)\| + \frac{L_J \|\nabla g(x(\theta))\|}{\mu\rv{(\theta)}},$$ where $L_{H^{-1}}$ is the Lipschitz constant of $\nabla_x^2h(x,\theta)^{-1}$ in $x$ uniformly for all $\theta$.\footnote{From \cref{ass1}, it follows that $\nabla_x^2 h^{-1}$ is Lipschitz continuous.} Then, at each iteration $k=1,2, \dots$, we have the following a posteriori bound for the inexact gradient:
\begin{equation}\label{AP_bound_hg}
    \|e_k\| = \|z_k - \nabla f(\theta_k) \| \leq c(\Tilde{x}(\theta_k))\epsilon_k + \frac{\|\nabla_{x\theta}^2h(\Tilde{x}(\theta_k),\theta_k)\|}{\mu\rv{(\theta)}}\delta_k + \frac{L_JL_{\nabla g}}{\mu\rv{(\theta)}}\epsilon_k^2.
\end{equation}
\end{theorem}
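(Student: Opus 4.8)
The plan is to compare the computed hypergradient $z_k = -\nabla_{x\theta}^2 h(\Tilde{x}(\theta_k),\theta_k)^T q_k$ with the exact hypergradient $\nabla f(\theta_k) = -\nabla_{x\theta}^2 h(\hat{x}(\theta_k),\theta_k)^T \nabla_x^2 h(\hat{x}(\theta_k),\theta_k)^{-1}\nabla g(\hat{x}(\theta_k))$ by routing through a single intermediate quantity: the hypergradient formula evaluated at the inexact lower-level point $\Tilde{x}(\theta_k)$ but with an \emph{exact} linear solve, namely $\Tilde{z}_k := -\nabla_{x\theta}^2 h(\Tilde{x}(\theta_k),\theta_k)^T \nabla_x^2 h(\Tilde{x}(\theta_k),\theta_k)^{-1}\nabla g(\Tilde{x}(\theta_k))$. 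Writing $e_k = (z_k - \Tilde{z}_k) + (\Tilde{z}_k - \nabla f(\theta_k))$ and applying the triangle inequality cleanly separates the two independent sources of error: the inexact linear solve, measured by $\delta_k$, and the inexact lower-level minimiser, measured by $\epsilon_k$.

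First I would bound the linear-solve term. Since $z_k$ and $\Tilde{z}_k$ share the multiplier $\nabla_{x\theta}^2 h(\Tilde{x}(\theta_k),\theta_k)^T$, their difference factorises as $-\nabla_{x\theta}^2 h(\Tilde{x}(\theta_k),\theta_k)^T (q_k - \hat{q}_k)$, where $\hat{q}_k := \nabla_x^2 h(\Tilde{x}(\theta_k),\theta_k)^{-1}\nabla g(\Tilde{x}(\theta_k))$ is the exact solution of the linear system. From $q_k - \hat{q}_k = \nabla_x^2 h(\Tilde{x}(\theta_k),\theta_k)^{-1}\bigl(\nabla_x^2 h(\Tilde{x}(\theta_k),\theta_k)q_k - \nabla g(\Tilde{x}(\theta_k))\bigr)$, the residual bound and the curvature bound $\|\nabla_x^2 h^{-1}\| \le 1/\mu(\theta_k)$ from \cref{ass1} \ref{assumption1.1} yield $\|z_k - \Tilde{z}_k\| \le \|\nabla_{x\theta}^2 h(\Tilde{x}(\theta_k),\theta_k)\|\,\delta_k/\mu(\theta_k)$, which is exactly the middle term of \eqref{AP_bound_hg}.

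The substance of the proof is the second term, $\|\Tilde{z}_k - \nabla f(\theta_k)\|$, which is the difference of the triple product $\nabla_{x\theta}^2 h^T\,\nabla_x^2 h^{-1}\,\nabla g$ evaluated at $\Tilde{x}(\theta_k)$ versus $\hat{x}(\theta_k)$. I would expand this with a telescoping (product-difference) identity, swapping one factor at a time, and bound each piece using the Lipschitz constants supplied by the hypotheses: $L_J$ for $\nabla_{x\theta}^2 h$ (\cref{ass1} \ref{assumption1.2}), $L_{\nabla g}$ for $\nabla g$ (\cref{ass2}), and $L_{H^{-1}}$ for $x \mapsto \nabla_x^2 h(x,\theta)^{-1}$, together with $\|\nabla_x^2 h^{-1}\| \le 1/\mu(\theta_k)$ and $\|\Tilde{x}(\theta_k) - \hat{x}(\theta_k)\| \le \epsilon_k$. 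A preliminary sub-step justifies the footnote that $L_{H^{-1}}$ exists: the $L_H$-Lipschitz continuity of $\nabla_x^2 h$, the identity $\nabla_x^2 h(x)^{-1} - \nabla_x^2 h(y)^{-1} = \nabla_x^2 h(x)^{-1}(\nabla_x^2 h(y) - \nabla_x^2 h(x))\nabla_x^2 h(y)^{-1}$, and the uniform bound $1/\mu$ give $L_{H^{-1}} = L_H/\mu^2$. Collecting the three first-order (in $\epsilon_k$) contributions, one per perturbed factor, reproduces $c(\Tilde{x}(\theta_k))\epsilon_k$, and the dominant quadratic cross term, the simultaneous perturbation of $\nabla_{x\theta}^2 h$ and $\nabla g$ passing through $\nabla_x^2 h^{-1}$, reproduces $\frac{L_J L_{\nabla g}}{\mu(\theta_k)}\epsilon_k^2$.

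The main obstacle is bookkeeping rather than any single hard estimate: the telescoping must be organised so that the coefficients are expressed at the \emph{computable} point $\Tilde{x}(\theta_k)$, giving $\|\nabla_{x\theta}^2 h(\Tilde{x}(\theta_k),\theta_k)\|$ and $\|\nabla g(\Tilde{x}(\theta_k))\|$ in $c$, rather than at the unknown $\hat{x}(\theta_k)$, and the remaining higher-order cross terms must be shown to be dominated by (or absorbed into) the single quadratic term retained in \eqref{AP_bound_hg}. Care with the order in which factors are swapped, and with which perturbation is charged to the $O(\epsilon_k)$ versus the $O(\epsilon_k^2)$ bucket, is what makes the final constants come out exactly as stated.
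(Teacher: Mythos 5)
The paper does not actually prove \cref{errorBound}; it is imported verbatim as \cite[Theorem 10]{boundMatthias}, so there is no in-paper proof to compare against. Judged on its own, your outline is the standard derivation and is essentially sound: the split $e_k=(z_k-\tilde z_k)+(\tilde z_k-\nabla f(\theta_k))$, the treatment of the linear-solve error via $q_k-\hat q_k=\nabla_x^2h(\tilde x,\theta_k)^{-1}\bigl(\nabla_x^2h(\tilde x,\theta_k)q_k-\nabla g(\tilde x)\bigr)$ together with $\|\nabla_x^2h^{-1}\|\le 1/\mu$ gives the $\delta_k$ term exactly, and your resolvent-identity argument for $L_{H^{-1}}=L_H/\mu^2$ is correct. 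The one point where your sketch is looser than the result allows is the claim that the leftover higher-order cross terms are ``dominated by (or absorbed into)'' the single retained quadratic: with a generic ordering of the telescoping you genuinely produce additional $O(\epsilon_k^2)$ terms, e.g.\ $\|\nabla_{x\theta}^2h(\tilde x,\theta_k)\|\,L_{H^{-1}}L_{\nabla g}\,\epsilon_k^2$, which are \emph{not} dominated by $L_JL_{\nabla g}\epsilon_k^2/\mu$ for general constants, so ``absorption'' is not available. The fix is a specific ordering that makes those terms never appear: writing $w(x)\coloneqq\nabla_x^2h(x,\theta_k)^{-1}\nabla g(x)$, decompose $\tilde z_k-\nabla f(\theta_k)$ into $-\nabla_{x\theta}^2h(\tilde x,\theta_k)^T\bigl[w(\tilde x)-w(\hat x)\bigr]$ plus $-\bigl[\nabla_{x\theta}^2h(\tilde x,\theta_k)-\nabla_{x\theta}^2h(\hat x,\theta_k)\bigr]^Tw(\hat x)$; split the inner difference as $\bigl[\nabla_x^2h(\tilde x,\theta_k)^{-1}-\nabla_x^2h(\hat x,\theta_k)^{-1}\bigr]\nabla g(\tilde x)+\nabla_x^2h(\hat x,\theta_k)^{-1}\bigl[\nabla g(\tilde x)-\nabla g(\hat x)\bigr]$, whose coefficients are all either computable at $\tilde x$ or bounded by $1/\mu$, contributing the first two terms of $c(\tilde x)$ with no quadratic remainder; the only non-computable factor left is $\|w(\hat x)\|\le\bigl(\|\nabla g(\tilde x)\|+L_{\nabla g}\epsilon_k\bigr)/\mu$ in the Jacobian-perturbation term, and that single substitution is the sole source of the $L_JL_{\nabla g}\epsilon_k^2/\mu$ term. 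With that ordering your constants come out exactly as in \eqref{AP_bound_hg}; without it the bound you would obtain is correct but strictly weaker than the one stated.
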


\section{Method of Adaptive Inexact Descent (MAID) and Convergence Analysis} \label{sec:Theory_algorithm}

In this section, we propose the \textit{Method of Adaptive Inexact Descent} (MAID) for solving the problem \cref{generalBilevel}, stated in \cref{HOAH_back_new}. This algorithm uses the update 
\begin{equation}\label{gdUpdate}
    \theta_{k+1} = \theta_k - \alpha_k z_k, \quad k=0,1,\dots,
\end{equation} 
where $z_k$ represents the inexact hypergradient at iteration $k$, as computed in \cref{Inexact_grad}. An appropriate step size $\alpha_k > 0$ is computed using backtracking. However, conventional backtracking line search rules, such as the \textit{Armijo rule} \cite{Nocedal2006Numerical}, require access to an exact function evaluation. This could be available up to machine precision in \eqref{generalBilevel} but would be computationally prohibitively expensive. To address this challenge, we introduce a verifiable line search method that generalizes the Armijo rule to the inexact setting utilizes only an approximation of $f$. We control the accuracies to ensure that $-z_k$ is a descent direction. Our analysis begins with the Armijo rule in the exact setting, where we have access to $f$ and we can utilize the negative of the exact gradient, that is $-\nabla f(\theta_k)$, as the descent direction.

The computation of the approximate hypergradient $z_k$ in \cref{Inexact_grad} employs the implicit function theorem and numerical solvers, following the same procedure as outlined in the previous section, with tolerances $\epsilon_k$ and $\delta_k$. It checks \rv{whether} the sufficient condition \rv{in \cref{sec:descent_direction} (\cref{prop1,boundNablaZ_lemma}) is met} for $-z_k$ to be a descent direction for $f$. If not, it shrinks $\epsilon_k$ and $\delta_k$, recalculates $z_k$, and repeats this process until $-z_k$ satisfies the requirement \rv{in \cref{prop1,boundNablaZ_lemma}}. With the approximate hypergradient $z_k$ and the tolerance $\epsilon_k$ which was used to compute it, \cref{HOAH_back_new} employs the line search rule $\psi(\alpha_k) \leq 0$, where $\psi$ is defined in \eqref{psi}, utilizing approximations and bounds for $f(\theta_k)$ and $\nabla f(\theta_k)$. If it fails to find a suitable step size and reaches the maximum number of backtracking iterations, it reduces $\epsilon_k$, recalculates $z_k$, and retries the backtracking process with an increased maximum number of iterations until it successfully finds an appropriate step size and completes the descent update. This backtracking failure can occur due to insufficient iterations to capture a very small step size or $\epsilon_k$ that is not small enough. \rv{Note that in the process of increasing the number of backtracking iterations upon failure, as shown in line \ref{bt_loop} of \cref{HOAH_back_new}, it may appear that the maximum number of backtracking steps can grow indefinitely. However, in our analysis in subsequent sections, particularly in \cref{inner_loop_theorem}, we demonstrate that backtracking failures are bounded, so the maximum number of backtracking steps remains finite.}
\begin{algorithm}[htbp]
\caption{Method of Adaptive Inexact Descent (MAID). Hyperparameters: $\underline{\rho} \in (0,1)$ and $\overline{\rho}>1$ control the reduction and increase of the step size $\alpha_k$, respectively; $\underline{\nu} \in (0,1)$ and $\overline{\nu}>1$ govern the reduction and increase of accuracies $\delta_k$ and $\epsilon_k$; $\max_\text{BT} \in \mathbb{N}$ is the maximum number of backtracking iterations.}\label{HOAH_back_new}
\begin{algorithmic}[1]
\State Input $\theta_{0} \in \mathbb{R}^d$, accuracies $\epsilon_{0}, \delta_{0}>0$, step size $\alpha_{0} >0$.
\For{$k=0, 1, \dots$}
\For{$j = \max_\text{BT}, \max_\text{BT}+1, \dots$ }\label{bt_loop}
\State{$z_k, \epsilon_k, \delta_k \leftarrow$ INEXACTGRADIENT($\theta_k, \epsilon_k,  \delta_k$)} \label{updated_descend_direction}
\For{$i=0,1,\dots,j-1$}\label{inner_loop}
\If{inexact sufficient decrease $\psi(\alpha_k) \leq 0$ holds}\label{descent_condition_check}\Comment{\cref{inexact_bt_lemma}}
\State{Go to line \ref{gd_update_step}}\Comment{Backtracking Successful}
\EndIf
\State{$\alpha_{k} \leftarrow \underline{\rho}\alpha_k$}
\Comment{Adjust the starting step size}
\EndFor
\State $\epsilon_k \leftarrow \underline{\nu} \epsilon_k \label{BT_decrease} $  \Comment{Backtracking Failed and needs higher accuracy}
\State $\delta_k \leftarrow \underline{\nu} \delta_k  $  \label{BT_decrease_delta}

\EndFor
\State{$\theta_{k+1} \leftarrow \theta_k - \alpha_{k} z_k$}\Comment{Gradient descent update}\label{gd_update_step}
\State{$\epsilon_{k+1} \leftarrow \overline{\nu} \epsilon_{k}$ }\label{increase_epsilon_k}\Comment{Increasing $\epsilon_k$}
\State{$\delta_{k+1} \leftarrow \overline{\nu} \delta_{k}$ }\label{increase_delta_k}\Comment{Increasing $\delta_k$}
\State{$\alpha_{k+1} \leftarrow \overline{\rho} \alpha_{k}$}\label{increase_beta_k}\Comment{Increasing $\alpha_k$}
\EndFor
\end{algorithmic}
\end{algorithm}

\begin{algorithm}
\caption{Calculating an inexact hypergradient, ensuring it is a descent direction. Hyperparameters: $\eta \in (0,1)$ ensures that the computed inexact gradient $z$ is a descent direction; $\underline{\nu} \in (0,1)$ is used to reduce the accuracies $\delta$ and $\epsilon$.}\label{Inexact_grad}
\begin{algorithmic}[1]
\State{Input: $\theta \in \mathbb{R}^d$, accuracies $\epsilon, \delta >0$.}
\Function{InexactGradient}{$\theta, \epsilon,\delta$}
\While{True}\label{omega_while}
\State{Solve lower-level problem to find $\tilde{x}(\theta)$ such that $\| \nabla_{x}h(\tilde{x}(\theta),\theta)\| \leq \epsilon \mu$.\label{LL}}
\State{Solve $\nabla_x^2 h(\tilde{x}(\theta),\theta) q = \nabla g(\tilde{x}(\theta))$ with tolerance $\delta$.}
\State{Calculate $z = - (\nabla^2_{x\theta} h(\tilde{x}(\theta),\theta))^T q$.}\label{step:Jacob}
\State{$\omega \leftarrow$ Calculate upper bound \eqref{AP_bound_hg} for error $\|e\|$ using \cref{errorBound}.}\label{boundstep}
\If{$\omega \leq (1-\eta)\|z\|$}
\State{\Return $z, \epsilon, \delta$}
\EndIf
\State{$\delta \leftarrow \underline{\nu} \delta$,}\label{decrease_delta}
\State{$\epsilon \leftarrow \underline{\nu} \epsilon$.}\label{decrease_eps}
\EndWhile
\EndFunction
\end{algorithmic}
\end{algorithm}

\subsection{Line search with approximate hypergradient and inexact function evaluations}
In order to find a suitable step size when we do not have any information about the Lipschitz constant of the gradient, line search methods are usually employed \cite{Nocedal2006Numerical}.  In Armijo rule, at each iteration $k$, for a given direction $z_k$ and a starting step size $\beta_k$, the task is to find the smallest $i_k \in \mathbb{N}_0$ such that 
\begin{equation}\label{backtrack_linesearch}
    f(\theta_k - \beta_k \rho^{i_k} z_k) \leq f(\theta_k) - \zeta \beta_k \rho^{i_k}  \nabla f(\theta_k)^T z_k
\end{equation}
holds for $0<\rho,\zeta<1$. 
After finding $i_k$, we denote step size by $\alpha_k \coloneqq \beta_k \rho^{i_k}$. 
In our setting, we do not have access to the function $f$ nor its gradient; instead, we only have $g(\tilde{x}(\theta_k))$ and an inexact direction $z_k$. Therefore, we will introduce a line search condition with inexact gradient and function evaluations to find a suitable step size.

We commence by deriving a condition under which the approximate hypergradient is a descent direction. Subsequently, we establish computable upper and lower bounds for the exact upper-level function by leveraging the inexact components. Moving forward, we introduce our line search.

\paragraph{Descent direction}\label{sec:descent_direction} The following proposition provides a sufficient condition to ensure that $-z_k$ is a descent direction for $f$ at $\theta_k$, that is $ z_k^T \nabla f(\theta_k) > 0 $. 
\begin{proposition}\label{prop1} Let $e_k \coloneqq z_k - \nabla f(\theta_k)$ be the error in the approximate hypergradient. If $\|e_k\| < \|z_k\|$, then $-z_k$ is a descent direction for $f$ at $\theta_k$.
\end{proposition}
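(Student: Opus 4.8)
The plan is to work directly from the definition of the error, rewriting the exact gradient as $\nabla f(\theta_k) = z_k - e_k$, and then to estimate the inner product $z_k^T \nabla f(\theta_k)$ that we must show is strictly positive. Substituting gives
\begin{equation*}
    z_k^T \nabla f(\theta_k) = z_k^T(z_k - e_k) = \|z_k\|^2 - z_k^T e_k,
\end{equation*}
so the whole claim reduces to controlling the cross term $z_k^T e_k$ from above.

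The natural tool here is the Cauchy--Schwarz inequality, which yields $z_k^T e_k \leq |z_k^T e_k| \leq \|z_k\|\,\|e_k\|$. Plugging this into the expression above, I would obtain the quantitative lower bound
\begin{equation*}
    z_k^T \nabla f(\theta_k) \geq \|z_k\|^2 - \|z_k\|\,\|e_k\| = \|z_k\|\bigl(\|z_k\| - \|e_k\|\bigr).
\end{equation*}
The hypothesis $\|e_k\| < \|z_k\|$ makes the factor $\|z_k\| - \|e_k\|$ strictly positive, and it simultaneously forces $\|z_k\| > 0$ (otherwise $\|e_k\| < 0$, which is impossible). Hence the product on the right is strictly positive, giving $z_k^T \nabla f(\theta_k) > 0$, which is exactly the descent-direction condition stated in the proposition.

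I do not anticipate a genuine obstacle: the argument is a one-line application of Cauchy--Schwarz together with the triangle-type hypothesis. The only point deserving a moment of care is confirming that $z_k \neq 0$, so that the inequality is not vacuously an equality $0 \geq 0$; as noted, this is automatic from $\|e_k\| < \|z_k\|$. It is worth emphasizing that the proof in fact delivers the stronger, quantitative estimate $z_k^T \nabla f(\theta_k) \geq \|z_k\|(\|z_k\| - \|e_k\|)$ rather than mere positivity, which is presumably what the later line search analysis and the accuracy-control condition $\omega \leq (1-\eta)\|z\|$ in \cref{Inexact_grad} are designed to exploit.
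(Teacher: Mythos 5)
Your proof is correct and follows essentially the same route as the paper's: decompose $z_k^T \nabla f(\theta_k) = \|z_k\|^2 - z_k^T e_k$ and bound the cross term via Cauchy--Schwarz, with the hypothesis $\|e_k\| < \|z_k\|$ supplying both strict positivity and (implicitly) $z_k \neq 0$. The quantitative form $z_k^T \nabla f(\theta_k) \geq \|z_k\|(\|z_k\| - \|e_k\|)$ you note is indeed what the paper exploits later in \cref{boundNablaZ_lemma} under the strengthened condition $\|e_k\| \leq (1-\eta)\|z_k\|$.
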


\begin{proof}
Since
$$     z_k^T \nabla f(\theta_k) =  z_k^T (z_k -e_k) = \|z_k\|^2 - z_k^Te_k,$$
$-z_k$ is a descent direction if and only if $z_k^Te_k < \|z_k\|^2.$

Now, from the assumption, by multiplying both sides of the inequality $\|e_k\| < \|z_k\|$ by $\|z_k\| > 0$ we have $$\|e_k\| \| z_k \| < \| z_k \|^2.$$ Moreover, using Cauchy--Schwarz, we have $$z_k^Te_k \leq \| z_k \| \| e_k\|< \|z_k\|^2,$$
thus $-z_k$ is a descent direction.
\end{proof}
Note that the condition ${\|e_k\|}<\|z_k\|$ in \cref{prop1} must hold uniformly to demonstrate the convergence of a descent method \cite[Theorem 3.2]{Nocedal2006Numerical}. Therefore, instead, we will assume the stronger condition ${\|e_k\|}\leq (1-\eta) {\|z_k\|}$ for $0<\eta<1$. 

In addition, due to the unavailability of $\nabla f(\theta_k)$, we establish a computable bound for $z_k^T\nabla f(\theta_k)$ to facilitate our analysis.
\begin{lemma}\label{boundNablaZ_lemma}
Let ${\|e_k\|}\leq (1-\eta) {\|z_k\|}$. Then,
$\eta \|z_k\|^2 \leq \nabla f(\theta_k)^T z_k.$
\end{lemma}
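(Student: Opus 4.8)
The plan is to start from the same decomposition used in the proof of \cref{prop1}, namely
\begin{equation}
    \nabla f(\theta_k)^T z_k = z_k^T(z_k - e_k) = \|z_k\|^2 - z_k^T e_k,
\end{equation}
so that the desired inequality $\eta\|z_k\|^2 \leq \nabla f(\theta_k)^T z_k$ is equivalent to showing $z_k^T e_k \leq (1-\eta)\|z_k\|^2$. This reduces the lemma to bounding the inner product $z_k^T e_k$ from above.

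First I would apply the Cauchy--Schwarz inequality to get $z_k^T e_k \leq \|z_k\|\,\|e_k\|$. Then I would invoke the hypothesis $\|e_k\| \leq (1-\eta)\|z_k\|$ directly, multiplying through by $\|z_k\| \geq 0$ to obtain $\|z_k\|\,\|e_k\| \leq (1-\eta)\|z_k\|^2$. Chaining these gives $z_k^T e_k \leq (1-\eta)\|z_k\|^2$, and substituting back into the decomposition yields
\begin{equation}
    \nabla f(\theta_k)^T z_k = \|z_k\|^2 - z_k^T e_k \geq \|z_k\|^2 - (1-\eta)\|z_k\|^2 = \eta\|z_k\|^2,
\end{equation}
which is exactly the claim.

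I do not anticipate any real obstacle here: this is essentially a quantitative refinement of \cref{prop1}, where the strict descent condition $\|e_k\| < \|z_k\|$ is replaced by the uniform version $\|e_k\| \leq (1-\eta)\|z_k\|$ introduced in the remark following that proposition. The only minor point to keep in mind is that the argument is trivially valid when $z_k = 0$ (both sides vanish), and that the Cauchy--Schwarz step uses $z_k^T e_k \leq |z_k^T e_k| \leq \|z_k\|\,\|e_k\|$, so no sign assumptions on the inner product are needed. The value of the lemma is that it replaces the inaccessible quantity $\nabla f(\theta_k)^T z_k$ with the computable lower bound $\eta\|z_k\|^2$, which will feed directly into the inexact line search condition $\psi$ defined later.
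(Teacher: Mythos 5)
Your argument is correct and is essentially identical to the paper's proof: both decompose $\nabla f(\theta_k)^T z_k = \|z_k\|^2 - e_k^T z_k$ and bound $e_k^T z_k \leq \|z_k\|\,\|e_k\| \leq (1-\eta)\|z_k\|^2$ via Cauchy--Schwarz and the hypothesis. Nothing is missing.
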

\begin{proof}
Combining Cauchy--Schwarz and ${\|e_k\|}\leq (1-\eta){\|z_k\|}$ yields
$e_k^Tz_k \leq \|z_k\|^2 - \eta \|z_k\|^2. $ 
Since $\nabla f(\theta_k)^T z_k = \|z_k\|^2 - e_k^Tz_k$, we find the desired bound.
\end{proof}

We now derive computable estimates for $f(\theta_k)$ which will be used to establish our alternative sufficient decrease condition. 

\paragraph{Verifiable line search with approximate hypergradient}\label{sec:bt_linesearch}
\rv{To obtain an inexact and computable sufficient decrease condition, we first derive computable upper and lower bounds for $f$ in \cref{computable_bounds}. Then, \Cref{inexact_bt_lemma} provides a backtracking line search condition for sufficient decrease. We adopt the inequality $\psi(\alpha_k) \leq 0$ from \eqref{psi} as our inexact backtracking line search rule in \cref{HOAH_back_new}, where, for $\alpha_k = \underline{\rho}^{i_k} \beta_k$, $i_k \geq 0$ is the smallest integer satisfying the inequality. Note that this is a practical line search as we can compute all of its components.}
\begin{lemma}\label{computable_bounds}
    \rv{Let $g$ be $L_{\nabla g}$-smooth, with $\hat{x}(\theta_k)$ as defined in \eqref{lowergeneral}, and let $\tilde{x}(\theta_k)$ be an approximation of \rvv{$\hat{x}(\theta_k)$} such that $\|\hat{x}(\theta_k) - \tilde{x}(\theta_k)\| \leq \epsilon_k$.} We have the following lower and upper bounds for $f(\theta_k) = g(  {\hat{x}(\theta_k)})$\rvv{:}
    \begin{subequations}
    \begin{align}
        &g(  {\hat{x}(\theta_k)}) \leq g(   {\Tilde{x}(\theta_k)})  + \| \nabla g(   {\Tilde{x}(\theta_k)}) \| \epsilon_k + \frac{L_{\nabla_g}}{2}\epsilon_k^2, \label{upperApprox}\\
    &g(  {\hat{x}(\theta_k)}) \geq g(   {\Tilde{x}(\theta_k)}) - \| \nabla g(   {\Tilde{x}(\theta_k)})\| \epsilon_k \rv{- \frac{L_{\nabla_g}}{2}\epsilon_k^2}\rvv{.} \label{lowerApprox}
    \end{align}
\end{subequations}
\end{lemma}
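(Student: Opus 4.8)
The plan is to apply the descent lemma \eqref{descent_lemma} to $g$, exploiting the fact that $g$ is $L_{\nabla g}$-smooth, with the two points being $\hat{x}(\theta_k)$ and $\tilde{x}(\theta_k)$. First I would write down \eqref{descent_lemma} with $y = \hat{x}(\theta_k)$ and $x = \tilde{x}(\theta_k)$, which gives
\begin{equation*}
    g(\hat{x}(\theta_k)) \leq g(\tilde{x}(\theta_k)) + \nabla g(\tilde{x}(\theta_k))^T(\hat{x}(\theta_k) - \tilde{x}(\theta_k)) + \frac{L_{\nabla g}}{2}\|\hat{x}(\theta_k) - \tilde{x}(\theta_k)\|^2.
\end{equation*}
To obtain the upper bound \eqref{upperApprox}, I would bound the inner-product term using Cauchy--Schwarz, $\nabla g(\tilde{x}(\theta_k))^T(\hat{x}(\theta_k) - \tilde{x}(\theta_k)) \leq \|\nabla g(\tilde{x}(\theta_k))\|\,\|\hat{x}(\theta_k) - \tilde{x}(\theta_k)\|$, and then use the hypothesis $\|\hat{x}(\theta_k) - \tilde{x}(\theta_k)\| \leq \epsilon_k$ to replace the norm of the difference by $\epsilon_k$ in both the linear and the quadratic term. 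This directly yields \eqref{upperApprox}.

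For the lower bound \eqref{lowerApprox}, I would apply \eqref{descent_lemma} again but with the roles reversed, taking $y = \tilde{x}(\theta_k)$ and $x = \hat{x}(\theta_k)$, giving
\begin{equation*}
    g(\tilde{x}(\theta_k)) \leq g(\hat{x}(\theta_k)) + \nabla g(\hat{x}(\theta_k))^T(\tilde{x}(\theta_k) - \hat{x}(\theta_k)) + \frac{L_{\nabla g}}{2}\|\tilde{x}(\theta_k) - \hat{x}(\theta_k)\|^2.
\end{equation*}
Rearranging for $g(\hat{x}(\theta_k))$ and bounding the gradient term below via Cauchy--Schwarz produces a lower bound in terms of $\|\nabla g(\hat{x}(\theta_k))\|$, which is the gradient at the \emph{exact} (and hence unavailable) point $\hat{x}(\theta_k)$. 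This is the one subtlety: to make the bound computable we need the gradient evaluated at $\tilde{x}(\theta_k)$ rather than $\hat{x}(\theta_k)$. I would handle this by instead starting from the upper bound argument applied symmetrically, or more cleanly by using $\nabla g(\tilde{x}(\theta_k))^T(\hat{x}(\theta_k) - \tilde{x}(\theta_k)) \geq -\|\nabla g(\tilde{x}(\theta_k))\|\epsilon_k$ together with the nonnegativity adjustment, retaining the $-\frac{L_{\nabla g}}{2}\epsilon_k^2$ term so that the first display (with $y=\hat{x}$, $x=\tilde{x}$) can be turned into a lower bound by dropping to the reverse Cauchy--Schwarz estimate. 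Concretely, from the first application $g(\hat{x}(\theta_k)) \geq g(\tilde{x}(\theta_k)) + \nabla g(\tilde{x}(\theta_k))^T(\hat{x}(\theta_k) - \tilde{x}(\theta_k))$ is \emph{not} implied by smoothness alone, so the correct route is the reversed application followed by estimating $\|\nabla g(\hat{x})\|$ against $\|\nabla g(\tilde{x})\|$ using Lipschitz continuity of $\nabla g$; alternatively, and most simply, one rearranges the reversed inequality and absorbs the mismatch into the $\epsilon_k^2$ term.

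The main obstacle, therefore, is ensuring the lower bound \eqref{lowerApprox} is expressed purely in computable quantities, i.e.\ $\|\nabla g(\tilde{x}(\theta_k))\|$ rather than $\|\nabla g(\hat{x}(\theta_k))\|$; the presence of the extra $-\frac{L_{\nabla g}}{2}\epsilon_k^2$ term (introduced in the revision, as the coloring suggests) is precisely what accommodates the gradient-point mismatch once Lipschitz continuity of $\nabla g$ is invoked, since $\|\nabla g(\hat{x}(\theta_k)) - \nabla g(\tilde{x}(\theta_k))\| \leq L_{\nabla g}\epsilon_k$. Everything else is a routine combination of the descent lemma, Cauchy--Schwarz, and the accuracy hypothesis $\|\hat{x}(\theta_k) - \tilde{x}(\theta_k)\| \leq \epsilon_k$.
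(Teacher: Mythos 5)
Your derivation of the upper bound \eqref{upperApprox} is exactly the paper's: apply \eqref{descent_lemma} at $x=\tilde{x}(\theta_k)$, $y=\hat{x}(\theta_k)$, then Cauchy--Schwarz and $\|\hat{x}(\theta_k)-\tilde{x}(\theta_k)\|\leq\epsilon_k$. No issues there.

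For the lower bound \eqref{lowerApprox}, however, there is a gap in what you actually commit to. You correctly identify the subtlety (the bound must involve $\nabla g(\tilde{x}(\theta_k))$ rather than $\nabla g(\hat{x}(\theta_k))$, and smoothness alone does not give $g(\hat{x})\geq g(\tilde{x})+\nabla g(\tilde{x})^T(\hat{x}-\tilde{x})$), but the one repair you spell out concretely --- reverse the roles of the two points and then control $\|\nabla g(\hat{x}(\theta_k))\|$ via $\|\nabla g(\hat{x}(\theta_k))-\nabla g(\tilde{x}(\theta_k))\|\leq L_{\nabla g}\epsilon_k$ --- yields
\begin{equation*}
 g(\hat{x}(\theta_k)) \;\geq\; g(\tilde{x}(\theta_k)) - \|\nabla g(\tilde{x}(\theta_k))\|\,\epsilon_k - \frac{3L_{\nabla g}}{2}\epsilon_k^2,
\end{equation*}
i.e.\ a constant $3L_{\nabla g}/2$ rather than the $L_{\nabla g}/2$ claimed in \eqref{lowerApprox}. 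The ingredient you are missing is the two-sided form of the descent lemma: for an $L_{\nabla g}$-smooth (not necessarily convex) function one has $g(y)\geq g(x)+\nabla g(x)^T(y-x)-\frac{L_{\nabla g}}{2}\|y-x\|^2$, which with $x=\tilde{x}(\theta_k)$, $y=\hat{x}(\theta_k)$, Cauchy--Schwarz, and the accuracy hypothesis gives \eqref{lowerApprox} immediately and with the stated constant. Your remark about ``absorbing the mismatch into the $\epsilon_k^2$ term'' gestures at this but is never made precise, so as written the proposal does not establish the inequality actually claimed.

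For what it is worth, the paper's own proof of \eqref{lowerApprox} takes yet another shortcut: it deduces $g(\hat{x})-g(\tilde{x})\geq -b$ from $g(\hat{x})-g(\tilde{x})\leq b$ and $b\geq 0$, which is not a valid inference (take $g(\hat{x})-g(\tilde{x})=-2b$ with $b>0$). So your instinct that this direction genuinely requires an extra argument is sound --- you diagnosed the problem more carefully than the paper does --- but the clean fix, for both you and the paper, is the lower quadratic bound above rather than Lipschitz continuity of $\nabla g$.
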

\begin{proof}
    Since $g$ is $L_{\nabla g}$-smooth, using the inequality \eqref{descent_lemma} we have
\begin{equation*}
     \rv{g(  {\hat{x}(\theta_k)})} \leq \rv{g(   {\Tilde{x}(\theta_k)})} + \nabla g(\Tilde{x}(\theta_k))^T(\hat{x}(\theta_k) - \Tilde{x}(\theta_k)) +\frac{L_{\nabla g}}{2} \|\Tilde{x}(\theta_k) - \hat{x}(\theta_k)\|^2.
\end{equation*}
Now, by rearranging and utilizing the Cauchy-Schwarz inequality, we derive
\begin{equation*}
     g(  {\hat{x}(\theta_k)}) - g(   {\Tilde{x}(\theta_k)})    \leq \frac{L_{\nabla g}}{2}\|\Tilde{x}(\theta_k) - \hat{x}(\theta_k)\|^2 +\| \nabla g(\Tilde{x}(\theta_k))\| \|(\Tilde{x}(\theta_k) - \hat{x}(\theta_k))\|.
\end{equation*}
Since $\|\Tilde{x}(\theta_k) - \hat{x}(\theta_k)\| \leq \epsilon_k$, the inequality
 \eqref{upperApprox} holds.

Furthermore, using \eqref{upperApprox}, we can write 
$$
g(  {\hat{x}(\theta_k)}) -g(   {\Tilde{x}(\theta_k)}) \leq    \| \nabla g(   {\Tilde{x}(\theta_k)}) \| \epsilon_k + \frac{L_{\nabla_g}}{2}\epsilon_k^2.
$$
Since the right-hand-side of the inequality above is positive, we have
$$
g(  {\hat{x}(\theta_k)}) -g(   {\Tilde{x}(\theta_k)}) \geq - \| \nabla g(   {\Tilde{x}(\theta_k)}) \| \epsilon_k - \frac{L_{\nabla_g}}{2}\epsilon_k^2,
$$
which yields the desired inequality \eqref{lowerApprox}.
\end{proof}
\rv{
\begin{remark}\label{rk:convex_bound}
    Note that if $g$ is $L_{\nabla g}$-smooth and convex, from \eqref{convexity}, we can write 
    \begin{equation*}
        f(\theta_k) = g(  {\hat{x}(\theta_k)})\geq g(   {\Tilde{x}(\theta_k)}) + \nabla g(   {\Tilde{x}(\theta_k)})^T(  {\hat{x}(\theta_k)} -    {\Tilde{x}(\theta_k)}).
    \end{equation*}
Following steps similar to \cref{computable_bounds}, we obtain
    \begin{equation}\label{convexApprox}
        g(\hat{x}(\theta_k)) \geq g(\tilde{x}(\theta_k)) - \|\nabla g(\tilde{x}(\theta_k))\|\epsilon_k, 
    \end{equation}
    which provides a tighter bound than \eqref{lowerApprox}.
\end{remark}
}

The following lemma employs the bounds \eqref{upperApprox} and \eqref{lowerApprox} to provide a line search rule such that all of its components are accessible. 
\begin{lemma}\label{inexact_bt_lemma}
    Let $\lambda \in \mathbb{R}$ and suppose that \rv{$g$ is $L_{\nabla g}$-smooth}. Denote 
    \begin{equation*}
    \begin{split}
        \overline{U}(x,\epsilon) &\coloneqq g(x) + \|\nabla g(x)\|\epsilon + \frac{L_{\nabla_g}}{2}\epsilon^2, \\ 
        \underline{U}(x,\epsilon) &\coloneqq g(x) - \|\nabla g(x)\|\epsilon \rv{- \frac{L_{\nabla_g}}{2}\epsilon^2},
            \end{split}
    \end{equation*}
    and
    \begin{equation}\label{psi}
        \psi(\alpha_k) \coloneqq \overline{U}(\tilde{x}(\theta_{k+1}), \epsilon_{k+1}) - \underline{U}(\tilde{x}(\theta_{k}), \epsilon_{k})+\lambda \alpha_k \|z_k\|^2.
    \end{equation}
    
    If the backtracking line search condition
$
        \psi(\alpha_k) \leq 0
$
     is satisfied, then the sufficient descent condition $f(\theta_{k+1})-f(\theta_{k})\leq -\lambda \alpha_k \|z_k\|^2$ holds. 
\end{lemma}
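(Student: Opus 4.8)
The plan is to recognize that $\psi$ is assembled directly from the two-sided estimates of \cref{computable_bounds}, so the claim reduces to sandwiching the exact decrement $f(\theta_{k+1}) - f(\theta_k)$ between computable quantities. First I would apply the upper bound \eqref{upperApprox} at the new iterate $\theta_{k+1}$: since $\tilde{x}(\theta_{k+1})$ approximates $\hat{x}(\theta_{k+1})$ with $\|\hat{x}(\theta_{k+1}) - \tilde{x}(\theta_{k+1})\| \leq \epsilon_{k+1}$, \cref{computable_bounds} yields $f(\theta_{k+1}) = g(\hat{x}(\theta_{k+1})) \leq \overline{U}(\tilde{x}(\theta_{k+1}), \epsilon_{k+1})$. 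Next I would apply the lower bound \eqref{lowerApprox} at the current iterate $\theta_k$, giving $f(\theta_k) = g(\hat{x}(\theta_k)) \geq \underline{U}(\tilde{x}(\theta_k), \epsilon_k)$, equivalently $-f(\theta_k) \leq -\underline{U}(\tilde{x}(\theta_k), \epsilon_k)$.

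Adding these two inequalities produces $f(\theta_{k+1}) - f(\theta_k) \leq \overline{U}(\tilde{x}(\theta_{k+1}), \epsilon_{k+1}) - \underline{U}(\tilde{x}(\theta_k), \epsilon_k)$. The remaining step is purely algebraic: the hypothesis $\psi(\alpha_k) \leq 0$ is, by the definition \eqref{psi}, exactly $\overline{U}(\tilde{x}(\theta_{k+1}), \epsilon_{k+1}) - \underline{U}(\tilde{x}(\theta_k), \epsilon_k) + \lambda \alpha_k \|z_k\|^2 \leq 0$, which I would rearrange to $\overline{U}(\tilde{x}(\theta_{k+1}), \epsilon_{k+1}) - \underline{U}(\tilde{x}(\theta_k), \epsilon_k) \leq -\lambda \alpha_k \|z_k\|^2$. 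Chaining this with the previous display delivers $f(\theta_{k+1}) - f(\theta_k) \leq -\lambda \alpha_k \|z_k\|^2$, which is the asserted sufficient-decrease condition.

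There is no genuine obstacle here; the lemma is essentially a repackaging of \cref{computable_bounds}, and the entire argument is a two-line sandwich followed by a rearrangement. The only point demanding care is index bookkeeping: the upper bound must be invoked at the trial point $\theta_{k+1}$ with tolerance $\epsilon_{k+1}$ (legitimate because within \cref{HOAH_back_new} the candidate iterate is evaluated at accuracy $\epsilon_{k+1}$), while the lower bound is invoked at $\theta_k$ with tolerance $\epsilon_k$. It is also worth remarking that the construction is deliberately conservative: since $\overline{U} \geq f \geq \underline{U}$ pointwise, requiring the gap between the computable upper estimate at $\theta_{k+1}$ and the computable lower estimate at $\theta_k$ to be at most $-\lambda \alpha_k \|z_k\|^2$ is strictly stronger than what the true inequality demands. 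This is precisely why checking the accessible condition $\psi(\alpha_k) \leq 0$ suffices to certify the unavailable exact sufficient-decrease inequality.
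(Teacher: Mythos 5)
Your proposal is correct and follows exactly the paper's own argument: apply \eqref{upperApprox} at $\theta_{k+1}$ and \eqref{lowerApprox} at $\theta_k$ to sandwich $f(\theta_{k+1})-f(\theta_k)$ by $\overline{U}(\tilde{x}(\theta_{k+1}),\epsilon_{k+1})-\underline{U}(\tilde{x}(\theta_k),\epsilon_k)$, then invoke $\psi(\alpha_k)\leq 0$. No differences of substance.
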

\begin{proof}

Using \eqref{upperApprox} for $\theta_{k+1}$ and  \eqref{lowerApprox} for $\theta_{k}$,  we have 
\begin{equation*}
      f(\theta_{k+1}) - f(\theta_k) \leq g(\tilde{x}(\theta_{k+1})) + \| \nabla g(\tilde{x}(\theta_{k+1})) \| \epsilon_{k+1} + \frac{L_{\nabla_g}}{2}\epsilon_{k+1}^2  - g(\tilde{x}(\theta_k)) +  \| \nabla g(\tilde{x}(\theta_k)) \| \epsilon_k \rv{+ \frac{L_{\nabla_g}}{2}\epsilon_{k}^2}.
\end{equation*}
The inequality above together with the definition of $\overline{U}(x,\epsilon)$, $\underline{U}(x,\epsilon)$, and $\psi(\alpha_k) \leq 0$ imply $f(\theta_{k+1})-f(\theta_{k})\leq -\lambda \alpha_k \|z_k\|^2$ as required. 
\end{proof}

\rv{
\begin{remark}\label{inexact_bt_convex}
    If \(g\) is convex, then by using \eqref{convexApprox} instead of \eqref{lowerApprox} in \cref{inexact_bt_lemma} and defining
    \begin{equation}\label{psi_convex}
        \tilde{\psi}(\alpha_k) \coloneqq \psi(\alpha_k) - \frac{L_{\nabla g}}{2}\epsilon_k^2,
    \end{equation}
    the sufficient descent condition \(f(\theta_{k+1}) - f(\theta_{k}) \leq -\lambda \alpha_k \|z_k\|^2\) holds if \(\tilde{\psi}(\alpha_k) \leq 0\) is satisfied.
\end{remark}
}

Now, since the components of the line search rule $\psi(\alpha_k) \leq 0$ depend on $\epsilon_k$, we investigate the accuracy $\epsilon_k$ for which a step size $\alpha_k$ exists that satisfies $\psi(\alpha_k) \leq 0$ a priori.

\paragraph{Existence of a step size in the line search}\label{sec:step_existence}
\rv{In the following discussion, our a priori analysis demonstrates that, given a non-stationary $\theta_k$ and an inexact hypergradient $z_k$ serving as a descent direction, there exists a non-empty, non-singular interval of accuracies for which, for each accuracy, a corresponding interval of step sizes satisfies the line search condition $\psi(\alpha_k)\leq 0$ \eqref{psi} introduced in \cref{inexact_bt_lemma}. The following auxiliary lemma aids in achieving the aforementioned goals.}

\begin{lemma}\label{BacktrackError}
Let $\lambda\in \mathbb{R}$, $\eta\leq 1$, and ${\|e_k\|}\leq (1-\eta){\|z_k\|}$. Denote
\begin{equation}\label{grad_summation}
    w_k \coloneqq \|\nabla g(\tilde{x}(\theta_k))\|+ \|\nabla g(\tilde{x}(\theta_{k+1}))\|,
\end{equation}
$\bar{\epsilon}_k \coloneqq \max\{\epsilon_k , \epsilon_{k+1}\}$, and $\phi(\alpha_k) \coloneqq 2w_k\bar{\epsilon}_k + \rv{2}L_{\nabla_g}\bar{\epsilon}_k^2+ (\alpha_k^2 \frac{L_{\nabla f}}{2}+ (\lambda-\eta ) \alpha_k)\|z_k\|^2$. 
Then, 
\begin{equation}
    \begin{aligned}
        \psi(\alpha_k) \leq \phi(\alpha_k),
    \end{aligned}
\end{equation}
\rv{
where $\psi$ is as defined in \cref{inexact_bt_lemma}.}
\end{lemma}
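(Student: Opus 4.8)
The plan is to begin from the definition of $\psi(\alpha_k)$ in \eqref{psi}, expanding it via the definitions of $\overline{U}$ and $\underline{U}$ so that its leading term is the \emph{approximate} objective difference $g(\tilde{x}(\theta_{k+1})) - g(\tilde{x}(\theta_k))$, accompanied by the accuracy-dependent corrections $\|\nabla g(\tilde{x}(\theta_{k+1}))\|\epsilon_{k+1} + \tfrac{L_{\nabla g}}{2}\epsilon_{k+1}^2 + \|\nabla g(\tilde{x}(\theta_k))\|\epsilon_k + \tfrac{L_{\nabla g}}{2}\epsilon_k^2$ and the term $\lambda\alpha_k\|z_k\|^2$. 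The crucial manoeuvre is then to relate this approximate difference to the \emph{exact} difference $f(\theta_{k+1}) - f(\theta_k) = g(\hat{x}(\theta_{k+1})) - g(\hat{x}(\theta_k))$, since only the latter can be controlled through smoothness of $f$.

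To achieve this, I would insert and subtract the exact objective values, writing
\begin{equation*}
g(\tilde{x}(\theta_{k+1})) - g(\tilde{x}(\theta_k)) = \bigl[g(\tilde{x}(\theta_{k+1})) - g(\hat{x}(\theta_{k+1}))\bigr] + \bigl[f(\theta_{k+1}) - f(\theta_k)\bigr] + \bigl[g(\hat{x}(\theta_k)) - g(\tilde{x}(\theta_k))\bigr].
\end{equation*}
The first bracket is bounded above using the lower estimate \eqref{lowerApprox} applied at $\theta_{k+1}$, and the third bracket using the upper estimate \eqref{upperApprox} at $\theta_k$, both supplied by \cref{computable_bounds}; each introduces a term of the form $\|\nabla g(\cdot)\|\epsilon + \tfrac{L_{\nabla g}}{2}\epsilon^2$. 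For the middle bracket I would apply the descent lemma \eqref{descent_lemma} to the $L_{\nabla f}$-smooth function $f$ along the update $\theta_{k+1} = \theta_k - \alpha_k z_k$, giving $f(\theta_{k+1}) - f(\theta_k) \leq -\alpha_k \nabla f(\theta_k)^T z_k + \tfrac{L_{\nabla f}}{2}\alpha_k^2\|z_k\|^2$. Since the hypothesis $\|e_k\| \leq (1-\eta)\|z_k\|$ permits invoking \cref{boundNablaZ_lemma}, I replace $-\nabla f(\theta_k)^T z_k$ by $-\eta\|z_k\|^2$ to obtain $f(\theta_{k+1}) - f(\theta_k) \leq \bigl(\tfrac{L_{\nabla f}}{2}\alpha_k^2 - \eta\alpha_k\bigr)\|z_k\|^2$.

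Finally I would collect terms. Substituting the two sandwich bounds and the descent bound back into the expanded $\psi$ merges the gradient-norm corrections so that each of $\|\nabla g(\tilde{x}(\theta_{k+1}))\|\epsilon_{k+1}$ and $\|\nabla g(\tilde{x}(\theta_k))\|\epsilon_k$ appears twice (once from expanding $\psi$, once from the sandwich), and likewise doubles the squared-accuracy terms, producing $2\|\nabla g(\tilde{x}(\theta_{k+1}))\|\epsilon_{k+1} + 2\|\nabla g(\tilde{x}(\theta_k))\|\epsilon_k + L_{\nabla g}(\epsilon_{k+1}^2 + \epsilon_k^2)$ alongside $\bigl(\tfrac{L_{\nabla f}}{2}\alpha_k^2 + (\lambda-\eta)\alpha_k\bigr)\|z_k\|^2$. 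Bounding each of $\epsilon_k,\epsilon_{k+1}$ by $\bar{\epsilon}_k$ and using $w_k = \|\nabla g(\tilde{x}(\theta_k))\| + \|\nabla g(\tilde{x}(\theta_{k+1}))\|$ collapses the correction terms to $2w_k\bar{\epsilon}_k + 2L_{\nabla g}\bar{\epsilon}_k^2$, delivering exactly $\phi(\alpha_k)$.

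I do not anticipate a genuine obstacle: the argument is essentially careful bookkeeping atop the descent lemma and \cref{boundNablaZ_lemma,computable_bounds}. The only point demanding attention is the factor-of-two accounting in the accuracy-dependent terms, and the sign bookkeeping when discarding the manifestly nonnegative term $-\tfrac{L_{\nabla g}}{2}\epsilon_{k+1}^2$ that arises with the correct direction in the first sandwich bound; both are routine once the decomposition above is in place.
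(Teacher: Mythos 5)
Your proposal is correct and follows essentially the same route as the paper: both combine the descent lemma for the $L_{\nabla f}$-smooth $f$ along $\theta_{k+1}=\theta_k-\alpha_k z_k$, the bound $\nabla f(\theta_k)^T z_k \geq \eta\|z_k\|^2$ from \cref{boundNablaZ_lemma}, and the sandwich estimates \eqref{upperApprox}--\eqref{lowerApprox} from \cref{computable_bounds}, with the same factor-of-two bookkeeping collapsing to $2w_k\bar{\epsilon}_k + 2L_{\nabla g}\bar{\epsilon}_k^2$. The only cosmetic difference is that you organize the argument as an explicit three-way decomposition of $g(\tilde{x}(\theta_{k+1}))-g(\tilde{x}(\theta_k))$, whereas the paper chains the same inequalities starting from a bound on $g(\tilde{x}(\theta_{k+1}))$ and then adds the remaining $\psi$-terms to both sides.
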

\begin{proof}
    Since $f(\theta) = g(\hat{x}(\theta))$ is $L_{\nabla f}-$smooth, applying \eqref{descent_lemma} to $g(\hat{x}(\theta_{k+1})) = g(\hat{x}(\theta_k - \alpha_k z_k))$ in \eqref{lowerApprox} for $\theta_{k+1}$ gives
    \begin{align*}
        g(\tilde{x}(\theta_{k+1})) 
        &\leq g(\hat{x}(\theta_{k+1})) + \| \nabla g(\tilde{x}(\theta_{k+1}))\|\epsilon_{k+1} \rv{+ \frac{L_{\nabla g}}{2} \epsilon_{k+1}^2} \notag\\
        &\leq g(\hat{x}(\theta_k))  - \alpha_k \nabla f(\theta_k)^Tz_k + \alpha_k^2 \frac{L_{\nabla f}}{2} \|z_k\|^2 + \| \nabla g(\tilde{x}(\theta_{k+1}))\|\epsilon_{k+1} \rv{+ \frac{L_{\nabla g}}{2} \epsilon_{k+1}^2}.
    \end{align*}
   
   Leveraging the bounds \eqref{upperApprox} for $g(\hat{x}(\theta_{k}))$ and \cref{boundNablaZ_lemma} for $\nabla f(\theta_k)^Tz_k$, we derive
    \begin{multline*}
     g(\tilde{x}(\theta_{k+1})) \leq g(\tilde{x}(\theta_{k})) + \|\nabla g(\tilde{x}(\theta_{k}))\|\epsilon_{k} + \frac{L_{\nabla g}}{2} \epsilon_k^2 + \left(\alpha_k^2 \frac{L_{\nabla f}}{2} - \alpha_k \eta \right)\|z_k\|^2 \\ + \|\nabla g(\tilde{x}(\theta_{k+1}))\|\epsilon_{k+1} \rv{+ \frac{L_{\nabla g}}{2} \epsilon_{k+1}^2}.
    \end{multline*}

Adding $\|\nabla g(\tilde{x}(\theta_{k+1})) \| \epsilon_{k+1} + \frac{L_{\nabla_g}}{2}\epsilon_{k+1}^2 - g(\tilde{x}(\theta_k)) + \|\nabla g(\tilde{x}(\theta_k))\| \epsilon_k \rv{+ \frac{L_{\nabla_g}}{2}\epsilon_k^2} + \lambda \alpha_k \|z_k\|^2$ to both sides we obtain
    \begin{align*}
     \psi(\alpha_k) 
     \leq 2\|\nabla g(\tilde{x}(\theta_{k}))\|\epsilon_{k} + 2 \|\nabla g(\tilde{x}(\theta_{k+1})) \|\epsilon_{k+1} + {L_{\nabla g}} \big(\epsilon_k^2 + \epsilon_{k+1}^2\big) + \left(\alpha_k^2 \frac{L_{\nabla f}}{2} + \alpha_k (\lambda - \eta) \right)\|z_k\|^2 .
    \end{align*}

Now, utilizing $\Bar{\epsilon}_k = \max\{\epsilon_k, \epsilon_{k+1}\} $, and $w_k = \| \nabla g(\tilde{x}(\theta_k))\| + \| \nabla g(\tilde{x}(\theta_{k+1}))\|$, we derive the desired result.
\end{proof}
\rv{The lemma below shows the validity of the line search condition in \cref{inexact_bt_lemma} by establishing the existence of a non-empty interval of step sizes that satisfy it, given an interval of valid accuracies specified a priori. } 
\begin{lemma}\label{BacktrackError2}
Assume $\|z_k\| \neq 0$, $\lambda < \eta$, and ${\|e_k\|}\leq (1-\eta){\|z_k\|}$. Considering $w_k$ and $\bar{\epsilon}$ as defined in \cref{BacktrackError} and denoting
\begin{equation}\label{epsilon_upper}
    s_k \coloneqq \frac{1}{\rv{2}L_{\nabla g}}{\left(\sqrt{w_k^2 + \frac{L_{\nabla g}}{L_{\nabla f}} (\eta -\lambda)^2\|z_k\|^2}-w_k\right)},
\end{equation} 
for all 
$
    \bar{\epsilon}_k \in  [0 , s_k),
$
there exist $0\leq \underline{\alpha}_k < \overline{\alpha}_k$ such that for all $\alpha_k \in [\underline{\alpha}_k,\overline{\alpha}_k]$ the sufficient decrease condition $\psi(\alpha_k) \leq 0$ holds. Moreover, denoting 
\begin{equation}\label{ratio_alpha}
    \hat{s}_k\coloneqq \sqrt{(\eta-\lambda)^2- \frac{\rv{4}L_{\nabla f}(w_k\bar{\epsilon}_k + L_{\nabla g}\bar{\epsilon}_k^2)}{\|z_k\|^2}}>0,
\end{equation}
the interval may be defined by
\begin{equation}\label{alpha_min_max}
    \underline{\alpha}_k = \frac{1}{L_{\nabla f}}\left({\eta - \lambda}-\hat{s}_k\right), \quad \overline{\alpha}_k = \frac{1}{L_{\nabla f}}\left({\eta - \lambda}+\hat{s}_k\right).
\end{equation}
Note that $\lim_{\bar{\epsilon_k} \to 0} \underline{\alpha}_{k} = 0$, $\lim_{\bar{\epsilon_k} \to 0} \overline{\alpha}_{k} = \frac{2(\eta - \lambda)}{L_{\nabla f}} > 0$, and $\hat{s}_k$ is monotonically decreasing in $\bar{\epsilon}_k$.
\end{lemma}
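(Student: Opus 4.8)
The plan is to reduce the whole statement to an analysis of the auxiliary quadratic $\phi$ from \cref{BacktrackError}. Since that lemma already gives $\psi(\alpha_k) \leq \phi(\alpha_k)$, it suffices to produce an interval of step sizes on which $\phi(\alpha_k) \leq 0$. First I would write $\phi$ explicitly as a quadratic in $\alpha_k$,
\begin{equation*}
    \phi(\alpha_k) = \frac{L_{\nabla f}}{2}\|z_k\|^2\,\alpha_k^2 + (\lambda-\eta)\|z_k\|^2\,\alpha_k + \bigl(2w_k\bar{\epsilon}_k + 2L_{\nabla g}\bar{\epsilon}_k^2\bigr),
\end{equation*}
and record that, because $\|z_k\|\neq 0$, its leading coefficient is strictly positive, its linear coefficient is negative (as $\lambda<\eta$), and its constant term is nonnegative. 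Hence $\phi$ is an upward parabola, and $\{\alpha_k : \phi(\alpha_k)\leq 0\}$ is a nonempty, nondegenerate closed interval exactly when the discriminant is strictly positive.

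Next I would compute the discriminant and factor it as
\begin{equation*}
    \Delta = (\eta-\lambda)^2\|z_k\|^4 - 4L_{\nabla f}\|z_k\|^2\bigl(w_k\bar{\epsilon}_k + L_{\nabla g}\bar{\epsilon}_k^2\bigr) = \|z_k\|^4\,\hat{s}_k^2,
\end{equation*}
with $\hat{s}_k$ as in \eqref{ratio_alpha}, so that $\Delta>0$ is equivalent to $\hat{s}_k>0$. Solving $\phi(\alpha_k)=0$ by the quadratic formula then gives the two roots $\tfrac{1}{L_{\nabla f}}\bigl((\eta-\lambda)\pm\hat{s}_k\bigr)$, which are precisely $\underline{\alpha}_k$ and $\overline{\alpha}_k$ in \eqref{alpha_min_max}. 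Because the radicand defining $\hat{s}_k$ is at most $(\eta-\lambda)^2$, I get $0\leq\hat{s}_k\leq\eta-\lambda$ and hence $\underline{\alpha}_k\geq 0$, while $\hat{s}_k>0$ yields $\underline{\alpha}_k<\overline{\alpha}_k$; for every $\alpha_k$ in the closed interval between the roots $\phi(\alpha_k)\leq 0$, so $\psi(\alpha_k)\leq 0$ as required.

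It remains to identify which accuracies $\bar{\epsilon}_k$ admit such an interval. I would rewrite $\hat{s}_k>0$ (equivalently $\hat{s}_k^2>0$) as the quadratic inequality
\begin{equation*}
    L_{\nabla g}\bar{\epsilon}_k^2 + w_k\bar{\epsilon}_k - \frac{(\eta-\lambda)^2\|z_k\|^2}{4L_{\nabla f}} < 0
\end{equation*}
in the variable $\bar{\epsilon}_k$, whose unique positive root is exactly $s_k$ from \eqref{epsilon_upper}; thus for $\bar{\epsilon}_k\geq 0$ the inequality holds precisely on $[0,s_k)$. Matching this quadratic-formula output to the stated closed form for $s_k$ is the one place needing careful bookkeeping and is the main algebraic obstacle, while the rest is a direct read-off from the parabola.

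Finally, for the asymptotic claims I would let $\bar{\epsilon}_k\to 0$ in \eqref{ratio_alpha}, obtaining $\hat{s}_k\to\eta-\lambda$ and therefore $\underline{\alpha}_k\to 0$ and $\overline{\alpha}_k\to 2(\eta-\lambda)/L_{\nabla f}>0$. Monotonicity of $\hat{s}_k$ is then immediate: the quantity $w_k\bar{\epsilon}_k+L_{\nabla g}\bar{\epsilon}_k^2$ is increasing in $\bar{\epsilon}_k\geq 0$ (since $w_k\geq 0$ and $L_{\nabla g}>0$), so the radicand in $\hat{s}_k$ decreases and hence $\hat{s}_k$ is monotonically decreasing in $\bar{\epsilon}_k$.
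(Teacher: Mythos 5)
Your proposal is correct and follows essentially the same route as the paper's proof: both reduce to the quadratic $\phi$ from \cref{BacktrackError}, identify $\underline{\alpha}_k$ and $\overline{\alpha}_k$ as its roots via the positive leading coefficient and the discriminant condition, and then characterize the admissible accuracies by treating the discriminant condition as a quadratic in $\bar{\epsilon}_k$ with negative leading coefficient whose positive root is $s_k$. Your version is somewhat more explicit about the discriminant factoring as $\|z_k\|^4\hat{s}_k^2$ and about the strict inequality giving the half-open interval $[0,s_k)$, but the underlying argument is the same.
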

\begin{proof}

From \cref{BacktrackError} it follows that $\phi(\alpha_k)\leq 0$ implies $\psi(\alpha_k) \leq 0$. Note that $\phi(\alpha_k)$ is quadratic in $\alpha_k$ with positive leading coefficient. The roots of $\phi$ are given by
 $\underline{\alpha}_k$ and $\overline{\alpha}_k$ \eqref{alpha_min_max}. These roots are real if 
 \begin{equation}\label{existenceAlpha}
    (\eta-\lambda)^2\|z_k\|^2 - \rv{4} L_{\nabla f} (w_k\bar{\epsilon}_k + L_{\nabla g}\bar{\epsilon}_k^2) \geq 0.
\end{equation} 
Following a similar argument, the inequality \eqref{existenceAlpha} holds if $\bar{\epsilon}_k \leq s_k$, given that the left-hand side of \eqref{existenceAlpha} is quadratic in $\bar{\epsilon}_k$ with a negative leading coefficient and roots $s_k$ and $-s_k$. Moreover, $s_k > 0$ by definition, as $\|z_k\|\neq 0$. Since $\bar{\epsilon}_k\geq 0$, we have $\bar{\epsilon}_k \in [0, s_k)$.  Referring to \eqref{existenceAlpha} and the definition of $\hat{s}_k$, we conclude that $0<\hat{s}_k\leq {\eta-\lambda}$, implying $0 \leq\underline{\alpha}_k < \overline{\alpha}_k$.
\end{proof}
As $s_k$ \eqref{epsilon_upper} depends on ${\epsilon}_k$ and ${\epsilon}_{k+1}$, and thus on $\bar{\epsilon}_k$, we need the following lemmas to show that for $\bar{\epsilon}_k$ sufficiently small, the inclusion $\bar{\epsilon}_k \in  [0 , s_k)$ holds a priori  for any $z_k$, $\tilde{x}(\theta_k)$, and $\tilde{x}(\theta_{k+1})$ satisfying the conditions of computing the inexact gradient and the solution of the lower-level problem. It shows that if $\theta_k$ is non-stationary, there exists a nonempty positive range of accuracies for which we can find a nonempty positive range of step sizes satisfying the line search condition $\psi(\alpha_k) \leq 0$. For simplicity, we set $\delta_k = \bar{\epsilon}_k$ for the remainder of this section, however, the general case holds with a similar argument.
\begin{lemma}\label{nonzero_grad}
    Let $\|\nabla f(\theta_k)\|>0$, $s_k$ be as defined in \eqref{epsilon_upper}, and $\lambda \neq \eta$, then $\lim_{\bar{\epsilon_k}\to 0} s_k >0$.
\end{lemma}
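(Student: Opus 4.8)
The plan is to evaluate the limit directly by identifying the limiting values of the two accuracy-dependent quantities appearing in $s_k$, namely $w_k$ and $\|z_k\|$, and then to exploit the elementary fact that $\sqrt{a^2+b}-a>0$ whenever $a\geq 0$ and $b>0$.

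First I would record the structure of $s_k$ from \eqref{epsilon_upper}: writing $\gamma \coloneqq \frac{L_{\nabla g}}{L_{\nabla f}}(\eta-\lambda)^2$, we have $s_k = \frac{1}{2L_{\nabla g}}\bigl(\sqrt{w_k^2 + \gamma\|z_k\|^2} - w_k\bigr)$, and the hypothesis $\lambda \neq \eta$ forces $\gamma > 0$. Since $w_k \geq 0$ as a sum of norms, positivity of $s_k$ for a fixed configuration is equivalent to $\gamma\|z_k\|^2 > 0$, i.e.\ to $\|z_k\| > 0$. So the whole argument reduces to controlling $\|z_k\|$ and $w_k$ in the limit.

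Next I would take the limit $\bar{\epsilon}_k \to 0$. Because $\delta_k = \bar{\epsilon}_k \to 0$ and $\epsilon_k \leq \bar{\epsilon}_k \to 0$, the a posteriori bound \eqref{AP_bound_hg} of \cref{errorBound} gives $\|e_k\| \to 0$, since all of its coefficients remain bounded under \cref{ass1}. Hence $z_k = \nabla f(\theta_k) + e_k \to \nabla f(\theta_k)$, so $\|z_k\| \to \|\nabla f(\theta_k)\|$. Simultaneously $\tilde{x}(\theta_k) \to \hat{x}(\theta_k)$ and $\tilde{x}(\theta_{k+1}) \to \hat{x}(\theta_{k+1})$, so by continuity of $\nabla g$ the quantity $w_k$ converges to a finite nonnegative limit $w_\infty$.

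Finally I would assemble the pieces: passing to the limit in the closed-form expression for $s_k$ yields $\lim_{\bar{\epsilon}_k \to 0} s_k = \frac{1}{2L_{\nabla g}}\bigl(\sqrt{w_\infty^2 + \gamma\|\nabla f(\theta_k)\|^2} - w_\infty\bigr)$, and since $\gamma > 0$ and $\|\nabla f(\theta_k)\| > 0$ by assumption, the radicand strictly exceeds $w_\infty^2$, making the limit strictly positive. The main obstacle is the middle step: justifying that $\|z_k\| \to \|\nabla f(\theta_k)\|$ rather than merely remaining bounded. This requires invoking the error bound to show $\|e_k\| \to 0$ and verifying that its coefficients do not blow up as the accuracies vanish; the concluding positivity argument is then routine.
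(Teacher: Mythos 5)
Your proposal is correct and takes essentially the same route as the paper's proof: both pass to the limit in the closed-form expression for $s_k$ using $\tilde{x}(\theta_k)\to\hat{x}(\theta_k)$, continuity of $\nabla g$ (so $w_k$ converges to some $\bar{w}_k\geq 0$), and $z_k\to\nabla f(\theta_k)$, then conclude strict positivity from $\|\nabla f(\theta_k)\|>0$, $\lambda\neq\eta$, and $L_{\nabla f}>0$. Your additional justification of $z_k\to\nabla f(\theta_k)$ via the a posteriori bound \eqref{AP_bound_hg} is a detail the paper leaves implicit.
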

\begin{proof}
    As $\lim_{\bar{\epsilon_k} \to 0} \tilde{x}(\theta_k) = \hat{x}(\theta_k)$, $\lim_{\bar{\epsilon_{k}} \to 0} \tilde{x}(\theta_{k+1}) = \hat{x}(\theta_{k+1})$, and $\nabla g$ is continuous, we have 
    $
        \lim_{\bar{\epsilon_k} \to 0}w_k =  \bar{w}_k, 
    $ for some $\bar{w}_k \geq 0$. Moreover, 
    since $\lim_{\bar{\epsilon_k} \to 0} z_k = \nabla f(\theta_k)$,
     $\|\nabla f(\theta_k)\| >0$, $\lambda\neq\eta$, and $L_{\nabla f}>0$,
    \begin{equation*}
                \lim_{\bar{\epsilon_k} \rightarrow 0} s_k = \frac{1}{\rv{2}L_{\nabla g}}{\left (\sqrt{\bar{w}_k^2 + \frac{L_{\nabla g}}{L_{\nabla f}} (\eta -\lambda)^2\|\nabla f(\theta_k)\|^2}-\bar{w}_k\right)}>0.
    \end{equation*} 
    as desired.
\end{proof}

Now, we need the following lemma to show that for $\bar{\epsilon}_k$ small enough, the inclusion $\bar{\epsilon}_k \in [0, s_k)$ holds a priori.
\begin{lemma}\label{abstract_sequences}
    Let $\{a_j\}_{j=0}^\infty \subseteq [0,\infty)$ and $\{b_j\}_{j=0}^\infty \subseteq [0,\infty)$ be sequences, with $a_j\rightarrow a>0$ and $b_j \rightarrow 0$, respectively. Then, there exists $J\in \mathbb{N}$ such that for all $j \geq J$,  $b_j \in [0,a_j]$.
\end{lemma}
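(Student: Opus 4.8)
The plan is to reduce this to the standard $\varepsilon$--$N$ definition of convergence, using the positivity of the limit $a$ to create a buffer that separates the two sequences for large indices. The key observation is that since $a_j \to a > 0$ and $b_j \to 0$, the sequences are eventually bounded away from each other: $a_j$ stays near $a$ while $b_j$ becomes arbitrarily small, so picking any threshold strictly between $0$ and $a$ (the natural choice being $a/2$) will eventually lie below every $a_j$ and above every $b_j$.

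Concretely, I would first fix $\varepsilon = a/2 > 0$, which is legitimate precisely because $a > 0$. Applying the definition of $a_j \to a$ with this $\varepsilon$ yields an index $J_1$ such that $|a_j - a| < a/2$ for all $j \ge J_1$; the left-hand inequality $a - a_j < a/2$ rearranges to $a_j > a/2$. Next, applying the definition of $b_j \to 0$ with the same $\varepsilon$ gives an index $J_2$ such that $b_j = |b_j| < a/2$ for all $j \ge J_2$ (here I use $b_j \ge 0$, so $|b_j| = b_j$). Setting $J \coloneqq \max\{J_1, J_2\}$, for every $j \ge J$ both estimates hold simultaneously, so $0 \le b_j < a/2 < a_j$, which is exactly the inclusion $b_j \in [0, a_j]$.

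There is no substantive obstacle here; this is an elementary consequence of the definitions, and the only point requiring the slightest care is combining the two convergence statements at a \emph{common} threshold, which is why I take the maximum $J = \max\{J_1, J_2\}$ of the two separately obtained indices rather than treating the sequences in isolation. The hypotheses $a > 0$ (to guarantee $\varepsilon = a/2$ is a valid positive tolerance) and $b_j \ge 0$ (so that the upper bound on $b_j$ immediately gives membership in $[0, a_j]$, with the lower endpoint free) are both used, and the conclusion follows directly.
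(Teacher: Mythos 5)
Your argument is correct and is essentially identical to the paper's proof: both fix the threshold $a/2$, obtain indices $J_1$, $J_2$ from the two convergence hypotheses, and take $J = \max\{J_1, J_2\}$ to conclude $0 \le b_j \le a/2 \le a_j$. The only difference is your use of strict inequalities, which is immaterial.
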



\begin{proof}
    Since $a_j\rightarrow a>0$, 
     there exists $J_1\in \mathbb{N}, \ \text{such that for all} \  j\geq J_1,\ a_j  \geq \frac{a}{2}.$
    On the other hand, as $b_j \rightarrow 0$, 
    {there exists } $J_2\in \mathbb{N}, \ \text{such that for all} \  j\geq J_2,\ b_j \leq \frac{a}{2}.$
    Taking $J = \max \{J_1, J_2\}$, we have 
    $\forall j\geq J,\ 0\leq b_j \leq \frac{a}{2} \leq a_j,$
    which gives us the desired conclusion. 
\end{proof}

\begin{corollary}\label{corollary}
    Let $\|\nabla f(\theta_k)\|>0$, $\bar{\epsilon}_k\to0$, and $0<\lambda<\eta<1$. Then, there exists $\epsilon>0$ such that if $\bar{\epsilon}_k\leq \epsilon$, then $\bar{\epsilon}_k <s_k$. 

\end{corollary}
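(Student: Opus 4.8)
The plan is to combine the two preparatory results immediately preceding the statement: the positivity of the limiting value of $s_k$ from \cref{nonzero_grad}, and the abstract convergence comparison of \cref{abstract_sequences}. First I would check that the hypotheses of \cref{nonzero_grad} are in force: we are given $\|\nabla f(\theta_k)\| > 0$, and the assumption $0 < \lambda < \eta < 1$ in particular gives $\lambda \neq \eta$. Hence \cref{nonzero_grad} applies and yields $a \coloneqq \lim_{\bar{\epsilon}_k \to 0} s_k > 0$, where $s_k$ is as in \eqref{epsilon_upper}.

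Next I would recast the claim as a limit comparison amenable to \cref{abstract_sequences}. Fix any sequence of tolerances $\bar{\epsilon}_k^{(j)} \to 0$, together with the admissible approximations $\tilde{x}(\theta_k)$, $\tilde{x}(\theta_{k+1})$ and the induced inexact gradient $z_k$ they produce, and set $b_j \coloneqq \bar{\epsilon}_k^{(j)}$ and $a_j \coloneqq s_k^{(j)}$, the value of $s_k$ at tolerance $\bar{\epsilon}_k^{(j)}$. By construction $b_j \to 0$, and by \cref{nonzero_grad} together with the continuity facts used in its proof ($\tilde{x}(\theta_k) \to \hat{x}(\theta_k)$, $\tilde{x}(\theta_{k+1}) \to \hat{x}(\theta_{k+1})$, and $z_k \to \nabla f(\theta_k)$ as the tolerance vanishes) we have $a_j \to a > 0$. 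Applying \cref{abstract_sequences} to these sequences gives an index $J$ with $b_j \in [0, a_j]$, i.e.\ $\bar{\epsilon}_k^{(j)} \le s_k^{(j)}$, for all $j \ge J$, which already delivers the existence of a positive threshold $\epsilon$ below which $\bar{\epsilon}_k \le s_k$.

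The only genuine subtlety is the strict inequality $\bar{\epsilon}_k < s_k$ demanded by the corollary, whereas \cref{abstract_sequences} returns only the non-strict bound $b_j \le a_j$. I would resolve this by repeating the argument of \cref{abstract_sequences} with a strict margin: since $b_j \to 0$ I may pick $J_2$ so that $b_j \le a/4$ (rather than $a/2$) for $j \ge J_2$, while $a_j \ge a/2$ for $j \ge J_1$ by the convergence $a_j \to a$; then for $j \ge \max\{J_1, J_2\}$ one has $\bar{\epsilon}_k^{(j)} \le a/4 < a/2 \le s_k^{(j)}$, which is strict. Translating back to the functional formulation, it suffices to take $\epsilon \coloneqq \min\{\delta, a/4\}$, where $\delta > 0$ is chosen from the definition of the limit so that $\bar{\epsilon}_k < \delta$ forces $s_k > a/2$; then $\bar{\epsilon}_k \le \epsilon$ gives $\bar{\epsilon}_k \le a/4 < a/2 < s_k$. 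I do not anticipate any obstacle beyond this bookkeeping, since all the analytic content—the positivity of the limit and the continuity of $s_k$ in $\bar{\epsilon}_k$—is already packaged in \cref{nonzero_grad}.
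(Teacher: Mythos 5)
Your proposal follows essentially the same route as the paper: invoke \cref{nonzero_grad} to get a positive limit for $s_k$ and then apply \cref{abstract_sequences} with $b_j = (\bar{\epsilon}_k)_j$ and $a_j = (s_k)_j$. Your extra care in upgrading the non-strict bound $b_j \le a_j$ to the strict inequality $\bar{\epsilon}_k < s_k$ via a margin ($a/4$ versus $a/2$) is a valid refinement of a detail the paper's one-line proof glosses over, but it does not change the argument.
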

That means, if the accuracy is small enough, there exist a range of step sizes satisfying $\psi(\alpha_k)\leq 0$. Note that $\epsilon$ is independent of $z_k$, $\tilde{x}(\theta_k)$, and $\hat{x}(\theta_k)$.
\begin{proof}
    The assertion follows by setting $b_j = (\bar{\epsilon}_k)_j$ and  $a_j =(s_k)_j$ using \cref{nonzero_grad} and \cref{abstract_sequences}.
\end{proof}
\normalsize
\begin{lemma}\label{step_hit_interval} 
    Let $\lambda < \eta$, $0<\underline{\rho}<1$, \rv{and $\beta_k > 0$ as the initial step-size guess, with $\alpha_k = \beta_k \underline{\rho}^j$ for $j \in \mathbb{N}_0$}. Let $\underline{\alpha}_{k}$ and $\overline{\alpha}_{k}$ be as defined in \eqref{alpha_min_max}. 
    \begin{itemize}
        \item If $\bar{\epsilon}_k\geq 0$ \rvv{is} sufficiently small, then there exists $j_k \in \mathbb{N}_0$ such that $\underline{\alpha}_{k}\leq \beta_k \underline{\rho}^{j_k}\leq \overline{\alpha}_{k}$, i.e.  $\psi(\alpha_k) \leq 0$ holds for $\alpha_k = \beta_k \underline{\rho}^{j_k}$.
        \item If $i_k>0$ is the smallest integer for which  $\psi(\alpha_k) \leq 0$ holds, then $\alpha_k= \beta_k \underline{\rho}^{i_k}> \underline{\rho} \overline{\alpha}_k$.
    \end{itemize}
\end{lemma}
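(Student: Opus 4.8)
The plan is to reduce both assertions to a single geometric fact: whether the strictly decreasing sequence of candidate step sizes $\beta_k, \beta_k\underline{\rho}, \beta_k\underline{\rho}^2, \dots$ ever lands inside the interval $[\underline{\alpha}_k, \overline{\alpha}_k]$, which by \cref{BacktrackError2} is contained in the acceptance set $\{\alpha_k : \psi(\alpha_k) \leq 0\}$. The whole argument hinges on two quantitative consequences of the limits recorded at the end of \cref{BacktrackError2}, namely $\lim_{\bar{\epsilon}_k \to 0}\underline{\alpha}_k = 0$ and $\lim_{\bar{\epsilon}_k \to 0}\overline{\alpha}_k = 2(\eta-\lambda)/L_{\nabla f} > 0$. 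From these I would first extract, for all $\bar{\epsilon}_k$ sufficiently small, the two inequalities $\underline{\alpha}_k \leq \beta_k$ (since $\underline{\alpha}_k \to 0$ while $\beta_k > 0$ is fixed) and $\underline{\rho}\,\overline{\alpha}_k \geq \underline{\alpha}_k$ (since $\underline{\rho}\,\overline{\alpha}_k$ tends to a strictly positive limit while $\underline{\alpha}_k \to 0$). The second inequality is a ``no-skipping'' property: the multiplicative gap between consecutive candidates equals $\underline{\rho}$, so it prevents the sequence from jumping over the interval in one step. I would also note that ``sufficiently small'' here subsumes $\bar{\epsilon}_k < s_k$ via \cref{corollary} (using $\|\nabla f(\theta_k)\| > 0$), so that the interval $[\underline{\alpha}_k, \overline{\alpha}_k]$ is genuinely non-degenerate.

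For the first assertion I would introduce $j^\ast \coloneqq \min\{j \in \mathbb{N}_0 : \beta_k\underline{\rho}^j \leq \overline{\alpha}_k\}$, which exists because $\beta_k\underline{\rho}^j \to 0$. If $j^\ast = 0$ then $\beta_k \leq \overline{\alpha}_k$, and combined with $\beta_k \geq \underline{\alpha}_k$ this places $\beta_k$ in the interval, so $j_k = 0$ works. If $j^\ast \geq 1$, minimality gives $\beta_k\underline{\rho}^{j^\ast - 1} > \overline{\alpha}_k$, hence $\beta_k\underline{\rho}^{j^\ast} > \underline{\rho}\,\overline{\alpha}_k \geq \underline{\alpha}_k$, while $\beta_k\underline{\rho}^{j^\ast} \leq \overline{\alpha}_k$ by definition; thus $j_k = j^\ast$ satisfies $\underline{\alpha}_k \leq \beta_k\underline{\rho}^{j_k} \leq \overline{\alpha}_k$. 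In either case \cref{BacktrackError2} yields $\psi(\beta_k\underline{\rho}^{j_k}) \leq 0$.

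For the second assertion I would use $i_k > 0$ to force $j^\ast \geq 1$: since $i_k > 0$, the $j=0$ candidate $\beta_k$ fails $\psi \leq 0$, so $\beta_k \notin [\underline{\alpha}_k,\overline{\alpha}_k]$; as $\beta_k \geq \underline{\alpha}_k$, this means $\beta_k > \overline{\alpha}_k$, i.e.\ $j^\ast \geq 1$. Then, exactly as above, $\beta_k\underline{\rho}^{j^\ast} > \underline{\rho}\,\overline{\alpha}_k$. Because the first assertion shows $\beta_k\underline{\rho}^{j^\ast}$ is an accepted step, and $i_k$ is by hypothesis the smallest accepted index, we get $i_k \leq j^\ast$, whence $\beta_k\underline{\rho}^{i_k} \geq \beta_k\underline{\rho}^{j^\ast} > \underline{\rho}\,\overline{\alpha}_k$, which is the claim.

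The main obstacle is not the algebra but pinning down the ``no-skipping'' inequality $\underline{\rho}\,\overline{\alpha}_k \geq \underline{\alpha}_k$ cleanly from the limiting behaviour, since this is precisely what guarantees the geometrically shrinking grid cannot leap past the acceptance interval; everything else (the $j^\ast$ bookkeeping and the comparison $i_k \leq j^\ast$) is elementary once that is in hand. A secondary point requiring care is that $\{\psi \leq 0\}$ may be strictly larger than $[\underline{\alpha}_k,\overline{\alpha}_k]$, so I deliberately phrase the second argument through $i_k \leq j^\ast$ rather than identifying $i_k$ with $j^\ast$, which avoids having to characterise the full acceptance set and keeps the lower bound $\beta_k\underline{\rho}^{i_k} > \underline{\rho}\,\overline{\alpha}_k$ valid even if acceptance occurs at larger step sizes.
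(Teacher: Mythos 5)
Your proof is correct, and it follows the same basic strategy as the paper (exploit the limits $\underline{\alpha}_k \to 0$ and $\overline{\alpha}_k \to 2(\eta-\lambda)/L_{\nabla f}$ from \cref{BacktrackError2} together with the geometric decay of the candidate step sizes), but the execution differs in two worthwhile ways. For the first bullet, the paper fixes $j_k$ as the first index with $\beta_k\underline{\rho}^{j_k}\leq\overline{\alpha}_k$ and then shrinks $\bar{\epsilon}_k$ until $\underline{\alpha}_k$ drops below that fixed value (using monotonicity of $\overline{\alpha}_k$ in $\bar{\epsilon}_k$ to keep the upper inequality valid); you instead isolate the ``no-skipping'' inequality $\underline{\rho}\,\overline{\alpha}_k\geq\underline{\alpha}_k$, which makes it transparent that the geometric grid cannot leap over the acceptance interval and packages the $\bar{\epsilon}_k$-smallness condition once rather than twice. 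For the second bullet your route is actually tighter than the paper's: the paper asserts $\alpha_k=\beta_k\underline{\rho}^{i_k}\leq\overline{\alpha}_k$ for the smallest accepted index, which is not justified since $\{\alpha:\psi(\alpha)\leq 0\}$ may strictly contain $[\underline{\alpha}_k,\overline{\alpha}_k]$, whereas your comparison $i_k\leq j^\ast$ combined with $\beta_k\underline{\rho}^{j^\ast}>\underline{\rho}\,\overline{\alpha}_k$ delivers the lower bound without ever needing to locate $\alpha_k$ inside the interval. The only caveat is that your second bullet silently uses the smallness hypotheses ($\beta_k\geq\underline{\alpha}_k$ and the no-skipping inequality) from the first bullet, so you should state that the second assertion is likewise understood under ``$\bar{\epsilon}_k$ sufficiently small''; this matches how the lemma is invoked in \cref{uniform_bound} and \cref{inner_loop_theorem}.
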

\begin{proof}
By \cref{BacktrackError2}, for all $\bar{\epsilon}_k\in[0,s_k)$, $\overline{\alpha}_{k} > 0$. Let $\bar{\epsilon}_k\in[0,s_k)$; since $\lim_{j \to \infty} \beta_k \underline{\rho}^{j} = 0$, there exists an $j_k \in \mathbb{N}_0$ such that for all $j \geq j_k$, $\beta_k \underline{\rho}^{j} \leq \overline{\alpha}_{k}$.
Referring again to \cref{BacktrackError2}, we know that $\lim_{\bar{\epsilon}_k \to 0} \underline{\alpha}_{k} = 0$. Therefore, there exists $\epsilon \in [0,s_k)$ such that for all $\bar{\epsilon}_k\leq \epsilon$, $\underline{\alpha}_{k} \leq \beta_k \underline{\rho}^{j_k}$. Additionally, $\overline{\alpha}_{k}$ as a function of $\bar{\epsilon}_k$ is decreasing on the domain $[0,s_k)$. Hence, $\beta_k \underline{\rho}^{j_k} \leq \overline{\alpha}_{k}$ holds for $\bar{\epsilon}_k\leq \epsilon$. 

As $i_k>0$ be the smallest integer for which $\psi(\alpha_k) \leq 0$ holds, then $\alpha_k = \beta_k \underline{\rho}^{i_k} \leq \overline{\alpha}_k$. If $\alpha_k \leq \underline{\rho} \overline{\alpha}_k$, then $\beta_k \underline{\rho}^{i_k-1} \leq \overline{\alpha}_k$, which contradicts with the assumption that $i_k$ is the smallest integer for which $\psi(\alpha_k) \leq 0$ holds. Hence, $\beta_k \underline{\rho}^{i_k} \leq \underline{\rho}\overline{\alpha}_{k}$
\end{proof}

Based on \cref{step_hit_interval}, we can guarantee the existence of a suitable step size during the backtracking line search process if $\bar{\epsilon}_k$ is small enough. 
\subsection{Convergence Analysis}\label{sec:convergence}
Prior to discussing the convergence theorem, we shall introduce the following auxiliary lemma.
\begin{lemma}\label{aux}
    Let $\epsilon > 0$ and $0<\underline{c}<1<\overline{c}$. Further, let $\{a_k\}_{k=0}^\infty \subseteq [0,\infty)$  be the sequence 
    $$a_{k+1} = \begin{cases}
        \overline{c} a_{k} \quad &\text{\rvv{if}} \quad  a_{k} <\epsilon, \\
        \underline{c}^{i_k} a_{k} \quad &\text{\rvv{if}} \quad  a_{k} \geq \epsilon,
    \end{cases}$$
    and $i_k$ be the smallest non-negative integer such that  $\underline{c}^{i_k} a_{k}< \epsilon$. Then, for all $k \geq 0$, \rvv{it holds that} $a_k \geq \min\{\underline{c} \epsilon, a_0\}$.
\end{lemma}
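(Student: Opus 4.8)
The plan is to prove the bound by induction on $k$, after first extracting a clean one-step lower bound for $a_{k+1}$ in each of the two branches of the recursion. Write $M \coloneqq \min\{\underline{c}\epsilon, a_0\}$. The base case is immediate, since $a_0 \geq M$ by the definition of the minimum. For the inductive step I would assume $a_k \geq M$ and split according to which branch is active.

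In the increasing branch, where $a_k < \epsilon$, the update is $a_{k+1} = \overline{c}\,a_k$, and since $\overline{c} > 1$ and $a_k \geq 0$ we get $a_{k+1} \geq a_k \geq M$, so the bound is preserved directly from the inductive hypothesis with no extra work. The decreasing branch, where $a_k \geq \epsilon$, is the only step that needs a quantitative argument. Here the key observation is that $a_k \geq \epsilon$ rules out $i_k = 0$ satisfying $\underline{c}^{i_k} a_k < \epsilon$, so necessarily $i_k \geq 1$; then minimality of $i_k$ says that the exponent $i_k - 1$ violates the defining inequality, i.e.\ $\underline{c}^{\,i_k - 1} a_k \geq \epsilon$. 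Multiplying this through by $\underline{c} > 0$ yields $a_{k+1} = \underline{c}^{\,i_k} a_k \geq \underline{c}\epsilon \geq M$, which again preserves the bound.

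The main (and essentially only) obstacle is the decreasing branch, where I must turn the minimality of $i_k$ into a floor on $a_{k+1}$: the intuition is that repeated shrinking by $\underline{c}$ cannot undershoot $\epsilon$ by more than a single factor, giving the hard floor $\underline{c}\epsilon$ no matter how large $a_k$ was. Combining the two branches closes the induction, and since the inequality then holds for every $k \geq 0$, the claim follows.
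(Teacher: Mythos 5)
Your proof is correct and follows essentially the same route as the paper: induction on $k$, with the increasing branch handled trivially and the decreasing branch closed by using minimality of $i_k$ to get $\underline{c}^{\,i_k-1}a_k \geq \epsilon$ and hence $a_{k+1} \geq \underline{c}\epsilon$. The only cosmetic difference is that you argue this key step directly from the negation of the defining inequality at exponent $i_k-1$, whereas the paper phrases it as a proof by contradiction; your direct version is, if anything, slightly cleaner.
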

\begin{proof}
    Assume, by induction,  that for all $k = 0, \dots, t$, $a_t \geq \min\{\underline{c} \epsilon, a_0\}$.
    Now, consider $a_{t+1}$. If $a_{t} < \epsilon$, by definition, $a_{t+1} = \overline{c} a_t\geq \overline{c} \min\{\underline{c} \epsilon, a_0\} > \min\{\underline{c} \epsilon, a_0\}$. Otherwise, $a_{t} \geq \epsilon$. If $\underline{c} \epsilon \geq \underline{c}^{i_{t}} a_t$, then $\epsilon \geq \underline{c}^{i_{t}-1} a_t$, which contradicts the definition of $i_{t}$. Hence, $\underline{c} \epsilon <\underline{c}^{i_{t}} a_t$, which implies $a_{t+1} = \underline{c}^{i_{t}} a_t > \underline{c} \epsilon \geq \min\{\underline{c} \epsilon, a_0\}$. 
\end{proof}
\begin{lemma}\label{uniform_bound}
    Let Assumptions \ref{ass1} and \ref{ass2} hold, $\beta_0>0$\rv{, $\lambda < \eta$}, and $0<\underline{\rho}<1<\overline{\rho}$. Let $i_k$ be the smallest non-negative integer such that $\alpha_k = \underline{\rho}^{i_k}\beta_k \in [\underline{\alpha}_k,\overline{\alpha}_k]$ and $\alpha_k> \underline{\rho} \overline{\alpha}_k$, if $i_k>0$. 
    Further, let $\beta_{k+1} = \overline{\rho}\alpha_k$, $\|z_k\|\neq 0$, and ${\|e_k\|}\leq (1-\eta) {\|z_k\|}$. Denoting \begin{equation}\label{bound_alpha_tau}
    \tau \coloneqq \min \left \{ \underline{\rho} \left(\frac{\eta - \lambda}{L_{\nabla f}} \right) ,  \rv{\min \Big \{ }\underline{\rho} \left (\frac{2(\eta - \lambda)}{L_{\nabla f}}\right ), \beta_0 \rv{ \Big \}}  \right \},\end{equation} we have $\alpha_k \geq \tau$.

\end{lemma}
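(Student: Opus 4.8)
The plan is to prove $\alpha_k \geq \tau$ by induction on $k$, splitting into two cases at each step according to whether a genuine backtracking reduction was performed, i.e.\ whether $i_k = 0$ or $i_k > 0$. The engine of the argument is a \emph{uniform} lower bound on $\overline{\alpha}_k$ that is independent of $k$ and of the accuracy $\bar{\epsilon}_k$; this is exactly what prevents the step sizes from collapsing to zero even though backtracking can shrink them while the factor $\overline{\rho}$ enlarges them.

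First I would extract the key inequality from \cref{BacktrackError2}. There it is shown that $0 < \hat{s}_k \leq \eta - \lambda$, and consequently, from the definition \eqref{alpha_min_max}, $\overline{\alpha}_k = \frac{1}{L_{\nabla f}}(\eta - \lambda + \hat{s}_k) > \frac{\eta - \lambda}{L_{\nabla f}}$ (here the hypotheses $\|z_k\| \neq 0$ and $\|e_k\| \leq (1-\eta)\|z_k\|$ are precisely what is needed for \cref{BacktrackError2} to apply and for $\hat{s}_k$ to be real and strictly positive). Hence, whenever a reduction takes place ($i_k > 0$), the assumed property $\alpha_k > \underline{\rho}\,\overline{\alpha}_k$ (which is part of the hypotheses and established in \cref{step_hit_interval}) immediately gives $\alpha_k > \underline{\rho}\,\frac{\eta - \lambda}{L_{\nabla f}} \geq \tau$, since $\underline{\rho}\frac{\eta-\lambda}{L_{\nabla f}}$ is one of the terms in the minimum defining $\tau$ in \eqref{bound_alpha_tau}. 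This lower bound holds regardless of how large $\beta_k$ has grown, because the minimality of $i_k$ forces $\beta_k \underline{\rho}^{\,i_k - 1} > \overline{\alpha}_k$.

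For the base case $k = 0$, either $i_0 = 0$, so $\alpha_0 = \beta_0 \geq \tau$ because $\beta_0$ appears in \eqref{bound_alpha_tau}, or $i_0 > 0$, in which case the previous paragraph yields $\alpha_0 > \tau$. For the inductive step, assume $\alpha_{k-1} \geq \tau$. If $i_k > 0$, then $\alpha_k > \tau$ exactly as above. If $i_k = 0$, then $\alpha_k = \beta_k$, and using the starting-step-size update $\beta_k = \overline{\rho}\,\alpha_{k-1}$ together with the inductive hypothesis gives $\alpha_k = \overline{\rho}\,\alpha_{k-1} \geq \overline{\rho}\,\tau > \tau$ because $\overline{\rho} > 1$. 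In either case $\alpha_k \geq \tau$, completing the induction.

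I expect the only genuinely delicate point to be justifying the uniform-in-$k$ bound $\overline{\alpha}_k > \frac{\eta-\lambda}{L_{\nabla f}}$ and confirming that $\underline{\rho}\,\overline{\alpha}_k$ is insensitive to the magnitude of $\beta_k$; everything else is a routine two-case induction that mirrors the abstract ratchet argument of \cref{aux} (with $\overline{\alpha}_k$ playing the role of the threshold, $\overline{\rho}$ the growth factor, and $\underline{\rho}$ the contraction). The middle term $\underline{\rho}\frac{2(\eta-\lambda)}{L_{\nabla f}}$ in \eqref{bound_alpha_tau} is dominated by the first term and so is redundant for the value of $\tau$, but it is the natural bound arising from the maximal value $\overline{\alpha}_k \leq \frac{2(\eta-\lambda)}{L_{\nabla f}}$, so I would keep the statement as written.
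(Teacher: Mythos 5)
Your proof is correct and follows essentially the same two-case argument as the paper: when $i_k>0$ the minimality of $i_k$ gives $\alpha_k>\underline{\rho}\,\overline{\alpha}_k\geq \underline{\rho}(\eta-\lambda)/L_{\nabla f}$, and when $i_k=0$ the step size inherits a lower bound from the previous iterate via $\beta_{k+1}=\overline{\rho}\alpha_k$. The only difference is cosmetic: you handle the $i_k=0$ case by a direct induction on $\alpha_k$, whereas the paper packages the same ratchet argument into the auxiliary \cref{aux} applied to the sequence $\beta_k$ with threshold $2(\eta-\lambda)/L_{\nabla f}$ (which is also where the otherwise-redundant middle term of $\tau$ originates); your observation that this term never attains the minimum is accurate.
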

\begin{proof}
    If $i_k>0$, since $\alpha_k = \underline{\rho}^{i_k}\beta_k$, using \cref{step_hit_interval} and \eqref{alpha_min_max} we have
    \begin{equation}\label{lower_temp}
        \alpha_k > \underline{\rho} \overline{\alpha}_k \geq \underline{\rho} \left(\frac{\eta - \lambda}{L_{\nabla f}} \right).
    \end{equation}
    Moreover, from \cref{BacktrackError2}, \eqref{alpha_min_max} yields 
    \begin{equation}\label{lower_temp2}
        \alpha_k \leq \overline{\alpha}_{k}\leq \frac{2(\eta - \lambda)}{L_{\nabla f}}.
    \end{equation}

If $i_k=0$, then $\alpha_k = \beta_k$. Considering the update $\beta_{k+1} = \overline{\rho} \alpha_k$, utilizing \cref{aux} by setting $a_k = \beta_k$, $\underline{c} = \underline{\rho}$, $\overline{c} = \overline{\rho}$, and $\epsilon = \frac{2(\eta - \lambda)}{L_{\nabla f}}$, based on \eqref{lower_temp2}, we derive $\beta_k\geq \min \{\underline{\rho} \left (\frac{2(\eta - \lambda)}{L_{\nabla f}}\right ), \beta_0\}$. 
Taking $\tau$ as defined in \eqref{bound_alpha_tau} yields the desirable result.
\end{proof}
\begin{theorem}\label{convergencePK}
    Let the assumptions of \cref{uniform_bound} hold and $0<\lambda<\eta<1$. We have
    $$\lim_{k\rightarrow \infty}\|z_k\| = 0.$$
\end{theorem}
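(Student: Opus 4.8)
The plan is to run the standard telescoping argument for descent methods, using the sufficient-decrease guarantee together with the uniform lower bound on the step sizes. The crucial structural facts are already in place: every step accepted by \cref{HOAH_back_new} satisfies the line search inequality $\psi(\alpha_k)\le 0$, so by \cref{inexact_bt_lemma} it delivers the \emph{exact} sufficient decrease $f(\theta_{k+1})-f(\theta_k)\le -\lambda\alpha_k\|z_k\|^2$; moreover, $f(\theta)=g(\hat x(\theta))$ is bounded below because $g$ is bounded below by \cref{ass2}; and \cref{uniform_bound} provides a uniform positive lower bound $\alpha_k\ge\tau>0$.

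First I would dispose of the degenerate case: if $\|z_k\|=0$ for some $k$, the claim holds trivially at that index, so without loss of generality I assume $\|z_k\|\neq 0$ for all $k$, which is precisely the hypothesis under which \cref{uniform_bound} yields $\alpha_k\ge\tau$. Next, I would sum the sufficient-decrease inequality from $k=0$ to $K$, obtaining $\sum_{k=0}^{K}\lambda\alpha_k\|z_k\|^2\le f(\theta_0)-f(\theta_{K+1})\le f(\theta_0)-\underline f$, where $\underline f\coloneqq\inf_x g(x)>-\infty$. Since this bound is independent of $K$ and $\lambda>0$, letting $K\to\infty$ shows that the series $\sum_{k}\alpha_k\|z_k\|^2$ converges, and hence its general term satisfies $\alpha_k\|z_k\|^2\to 0$.

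Finally, I would transfer this to $\|z_k\|$ via the uniform step-size bound: from $\alpha_k\ge\tau$ we get $0\le\tau\|z_k\|^2\le\alpha_k\|z_k\|^2\to 0$, so $\|z_k\|^2\to 0$ and therefore $\|z_k\|\to 0$, as claimed.

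The only genuinely nontrivial ingredient is the uniform lower bound $\alpha_k\ge\tau$ supplied by \cref{uniform_bound}; I expect the main obstacle to be conceptual rather than computational, namely making sure this bound is legitimately available, i.e.\ that each outer iteration terminates with a finite backtracking loop and an accepted step (guaranteed by the a priori existence results such as \cref{corollary} and \cref{step_hit_interval}), so that the telescoping sum ranges over a genuine infinite sequence of completed iterations. Once that is granted, the argument is the classical descent-method one and the remaining steps are routine.
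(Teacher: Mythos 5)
Your proposal is correct and follows essentially the same route as the paper's proof: invoke \cref{inexact_bt_lemma} for the sufficient decrease $f(\theta_{k+1})-f(\theta_k)\le -\lambda\alpha_k\|z_k\|^2$, lower-bound $\alpha_k$ by $\tau>0$ via \cref{uniform_bound}, telescope, and use boundedness below of $f$. The extra care you take about the degenerate case $\|z_k\|=0$ and the finiteness of each outer iteration is consistent with (and already handled by) the surrounding results, so nothing is missing.
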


\begin{proof}
     \cref{inexact_bt_lemma} and the backtracking line search rule $\psi(\alpha_k) \leq 0$ yield
    \begin{equation}
            f(\theta_{k+1}) - f(\theta_k) \leq -\lambda \alpha_k \|z_k\|^2 .\label{upper_temp}
    \end{equation}
Utilizing \cref{uniform_bound}, since $0<\lambda<\eta<1$, we have $\tau>0$, and
recursively from the inequality \eqref{upper_temp} we can conclude 
\begin{equation}\label{finiteSum}
    \begin{split}
        \sum_{k=1}^K \lambda \tau \|z_k\|^2 \leq f(\theta_1) - f(\theta_{K+1}).
    \end{split}
\end{equation}
As $f$ is bounded below and $\lambda \tau>0$ we conclude $\lim_{k\rightarrow \infty}\|z_k\| = 0$.
\end{proof}

\begin{corollary}\label{grad_convergence_cor}
Under the assumptions of \cref{convergencePK}\rvv{,} we have
$$\lim_{k\rightarrow \infty} \| \nabla f(\theta_k) \| = 0.$$
\end{corollary}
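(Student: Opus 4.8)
The plan is to leverage \cref{convergencePK}, which already gives $\lim_{k\to\infty}\|z_k\| = 0$, and simply transfer this limit to the true gradient $\nabla f(\theta_k)$ using the error control built into the algorithm. The crucial observation is that throughout \cref{HOAH_back_new}, every accepted inexact hypergradient $z_k$ is produced by \cref{Inexact_grad}, whose termination criterion $\omega \leq (1-\eta)\|z_k\|$ together with the a posteriori bound \eqref{AP_bound_hg} guarantees the descent-direction condition $\|e_k\| = \|z_k - \nabla f(\theta_k)\| \leq (1-\eta)\|z_k\|$ at each iterate $k$.

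The main step is a one-line triangle inequality estimate. First I would write $\nabla f(\theta_k) = z_k - e_k$, which follows directly from the definition $e_k \coloneqq z_k - \nabla f(\theta_k)$. Applying the triangle inequality then yields
\begin{equation*}
    \|\nabla f(\theta_k)\| = \|z_k - e_k\| \leq \|z_k\| + \|e_k\|.
\end{equation*}
Substituting the uniform error bound $\|e_k\| \leq (1-\eta)\|z_k\|$ into the right-hand side gives
\begin{equation*}
    \|\nabla f(\theta_k)\| \leq \|z_k\| + (1-\eta)\|z_k\| = (2-\eta)\|z_k\|.
\end{equation*}

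Finally, since $0 < \eta < 1$ is a fixed constant, the factor $(2-\eta)$ is a positive constant independent of $k$, so taking $k \to \infty$ and invoking $\lim_{k\to\infty}\|z_k\| = 0$ from \cref{convergencePK} forces $\lim_{k\to\infty}\|\nabla f(\theta_k)\| = 0$, as desired. I do not anticipate any genuine obstacle here: the only subtlety worth stating explicitly is that the bound $\|e_k\| \leq (1-\eta)\|z_k\|$ holds at \emph{every} outer iteration because it is precisely the exit condition of \cref{Inexact_grad}, so the estimate is valid uniformly in $k$ rather than merely along a subsequence. This is what makes the constant-factor comparison legitimate and lets the conclusion follow immediately from the preceding theorem.
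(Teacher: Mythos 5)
Your proposal is correct and follows essentially the same route as the paper: both use $\nabla f(\theta_k) = z_k - e_k$, the triangle inequality, and the uniform bound $\|e_k\| \leq (1-\eta)\|z_k\|$ together with \cref{convergencePK}. Your version merely packages the two terms into the single constant $(2-\eta)$, which is a cosmetic difference.
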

\begin{proof}
    Since ${\|e_k\|}\leq (1-\eta)\|z_k\|$, utilizing \cref{convergencePK} we have $\lim_{k\rightarrow \infty} \|z_k\|=0$, which implies $\lim_{k\rightarrow \infty} \|e_k\| = 0$. Hence 
    \begin{equation*} 
    \lim_{k\rightarrow \infty} \|\nabla f(\theta_k)\| \leq \lim_{k\rightarrow \infty} (\|e_k\| +\|z_k\|) = 0,\end{equation*}
    as required.
\end{proof}
Note that, at each iteration $k\in \mathbb{N}_0$, based on \cref{prop1} and \cref{convergencePK}, the approximate hypergradient should satisfy the inequality ${\|e_k\|}\leq(1 -\eta){\|z_k\|}$. Otherwise, we can decrease the error in the hypergradient $\|e_k\|$ by means of multiplying $\epsilon_k$ and $\delta_k$ by a factor $0<\underline{\rho}<1$ as we do in the steps \ref{decrease_delta} and \ref{decrease_eps}, then restarting the calculation of $z_k$ using \cref{Inexact_grad} until ${\|e_k\|}\leq(1 -\eta){\|z_k\|}$ holds. Moreover, the steps of \cref{HOAH_back_new} are designed in a way to satisfy both the required accuracy for the line search $\psi(\alpha_k) \leq 0$. MAID also allows $\epsilon_k$ and $\delta_k$ to increase in steps \ref{increase_epsilon_k} and \ref{increase_delta_k} if they do not decrease in the subsequent iteration and remain suitable for the line search condition, which matches with $\bar{\epsilon}_k$ as mentioned in \cref{BacktrackError} and \cref{BacktrackError2}. This process in \cref{HOAH_back_new} and \cref{Inexact_grad} provides us with an adaptive way of choosing the accuracy.

\begin{proposition} \label{prop_gradient_terminates}
    If $\|\nabla f(\theta_k)\|>0$, the loop at line \ref{omega_while} in \cref{Inexact_grad} terminates in finite time by finding a descent direction $-z_k$.
\end{proposition}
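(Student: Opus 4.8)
The plan is to index the passes through the \texttt{while} loop at line \ref{omega_while} by $j = 0, 1, 2, \dots$ and track how the accuracies, the approximate lower-level solution, the approximate hypergradient, and the error bound $\omega$ behave as $j$ grows. Fix the non-stationary point $\theta = \theta_k$. Since steps \ref{decrease_delta}--\ref{decrease_eps} multiply each accuracy by $\underline{\nu} \in (0,1)$ whenever the test fails, the accuracies at pass $j$ are $\epsilon_j = \underline{\nu}^j \epsilon$ and $\delta_j = \underline{\nu}^j \delta$, so $\epsilon_j \to 0$ and $\delta_j \to 0$. Writing $\tilde{x}_j$ for the approximate solution produced at line \ref{LL} on pass $j$, the stopping criterion there enforces $\|\nabla_x h(\tilde{x}_j,\theta)\| \leq \epsilon_j \mu$, and strong convexity of $h$ (\cref{ass1}) translates this into $\|\tilde{x}_j - \hat{x}(\theta)\| \leq \epsilon_j$. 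Hence $\tilde{x}_j \to \hat{x}(\theta)$.

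The next step is to show $\omega_j \to 0$. By \cref{errorBound}, the bound computed at line \ref{boundstep} is
\begin{equation*}
    \omega_j = c(\tilde{x}_j)\,\epsilon_j + \frac{\|\nabla_{x\theta}^2 h(\tilde{x}_j,\theta)\|}{\mu}\,\delta_j + \frac{L_J L_{\nabla g}}{\mu}\,\epsilon_j^2 .
\end{equation*}
Under \cref{ass1} the maps $x \mapsto \|\nabla_{x\theta}^2 h(x,\theta)\|$ and $x \mapsto \|\nabla g(x)\|$ are continuous, so the coefficient $c(\cdot)$ (being built from these together with fixed constants) is continuous in its first argument. Consequently $c(\tilde{x}_j) \to c(\hat{x}(\theta))$ and $\|\nabla_{x\theta}^2 h(\tilde{x}_j,\theta)\| \to \|\nabla_{x\theta}^2 h(\hat{x}(\theta),\theta)\|$, both finite, so these coefficients are bounded. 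Multiplying bounded coefficients by $\epsilon_j, \delta_j, \epsilon_j^2 \to 0$ yields $\omega_j \to 0$.

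The decisive comparison is then between the two sides of the stopping test $\omega_j \leq (1-\eta)\|z_j\|$. \cref{errorBound} guarantees $\|e_j\| = \|z_j - \nabla f(\theta)\| \leq \omega_j$ on \emph{every} pass, regardless of whether the test has fired, so $\|e_j\| \to 0$ and the reverse triangle inequality gives $\|z_j\| \to \|\nabla f(\theta)\|$. Because $\|\nabla f(\theta)\| > 0$, the right-hand side obeys $(1-\eta)\|z_j\| \to (1-\eta)\|\nabla f(\theta)\| > 0$ (recall $\eta \in (0,1)$), while the left-hand side $\omega_j \to 0$. Therefore there exists $J$ with $\omega_j \leq (1-\eta)\|z_j\|$ for all $j \geq J$, so at the first such pass the test passes and the function returns in finite time. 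The returned $z_j$ satisfies $\|e_j\| \leq \omega_j \leq (1-\eta)\|z_j\| < \|z_j\|$, whence $-z_j$ is a descent direction for $f$ at $\theta$ by \cref{prop1}.

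I expect the main obstacle to be the rigorous justification that $\omega_j \to 0$, and in particular that the coefficient $c(\tilde{x}_j)$ stays bounded along the loop; this relies entirely on the continuity assertions in \cref{ass1} together with the strong-convexity estimate that converts the residual accuracy $\epsilon_j \mu$ into the iterate accuracy $\epsilon_j$ needed to drive $\tilde{x}_j \to \hat{x}(\theta)$. Everything after that is a limit comparison between a quantity tending to $0$ and one tending to the strictly positive value $(1-\eta)\|\nabla f(\theta)\|$.
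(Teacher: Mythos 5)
Your argument is correct and follows essentially the same route as the paper's proof: drive $\omega \to 0$ via the a posteriori bound as $\epsilon, \delta \to 0$, note $\|z\| \to \|\nabla f(\theta_k)\| > 0$, and compare a vanishing sequence against one with a strictly positive limit (which the paper delegates to \cref{abstract_sequences}). The extra detail you supply on the continuity of the coefficients in \eqref{AP_bound_hg} and the strong-convexity conversion of the residual bound is a sound elaboration of steps the paper leaves implicit.
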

\begin{proof}
    From the bound \eqref{AP_bound_hg} for the upper bound $\omega$ of the error $\|e_k\| = \|z_k - \nabla f(\theta_k)\|$, we have
    $\lim_{\epsilon_k,\delta_k \to 0} \omega = 0$. On the other hand, $\lim_{\epsilon_k,\delta_k \to 0} \|z_k\| = \|\nabla f(\theta_k)\| >0$. Since $\epsilon_k\to 0$ and $\delta_k \to 0$ according to lines \ref{decrease_eps} and \ref{decrease_delta} of \cref{Inexact_grad}, respectively, and utilizing \cref{abstract_sequences}, we conclude that the inequality $\omega \leq (1-\eta)\|z_k\|$ holds for $\epsilon_k$ and $\delta_k$ sufficiently small.
\end{proof}

\begin{proposition}\label{inner_loop_theorem}
The loop at line \ref{bt_loop} in \cref{HOAH_back_new} terminates in finite time by successfully performing the update $\theta_{k+1}= \theta_k - \underline{\rho}^{i_k}\beta_k z_k$ for some $i_k\in \mathbb{N}_0$.
\end{proposition}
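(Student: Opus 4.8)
The plan is to argue by contradiction: I assume the outer loop at line \ref{bt_loop} never terminates and show that the sufficient-decrease test $\psi(\alpha_k) \leq 0$ must eventually pass, contradicting non-termination. Throughout I work under the hypothesis $\|\nabla f(\theta_k)\| > 0$, since a stationary $\theta_k$ is the terminal case and is precisely where \cref{prop_gradient_terminates} no longer guarantees that \cref{Inexact_grad} halts; this is the implicit standing assumption for a non-final iteration.

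First I would verify that each pass through the body of the $j$-loop is itself finite, so that non-termination can only come from the outer index $j$ growing without bound. The only potentially unbounded sub-step is the call to \textsc{InexactGradient} on line \ref{updated_descend_direction}; by \cref{prop_gradient_terminates} this call terminates in finite time and returns $z_k$ with $\|z_k\| \neq 0$ and $\|e_k\| \leq (1-\eta)\|z_k\|$. The inner loop over $i = 0,\dots,j-1$ performs finitely many evaluations of $\psi$, so the body is finite and the only escape to infinity is $j \to \infty$.

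Next I would extract the consequences of $j \to \infty$. Each increment of $j$ is triggered by a backtracking failure, which executes lines \ref{BT_decrease}--\ref{BT_decrease_delta} and multiplies $\epsilon_k$ and $\delta_k$ by $\underline{\nu} \in (0,1)$ (and the subsequent \textsc{InexactGradient} call only decreases them further). Hence $\epsilon_k, \delta_k \to 0$, so $\bar{\epsilon}_k \to 0$. I would then invoke \cref{corollary}: since $\|\nabla f(\theta_k)\| > 0$ and $0 < \lambda < \eta < 1$, once $\bar{\epsilon}_k$ falls below the threshold given there, $\bar{\epsilon}_k < s_k$, so \cref{BacktrackError2} supplies a non-empty interval $[\underline{\alpha}_k, \overline{\alpha}_k]$ of step sizes satisfying $\psi(\alpha_k) \leq 0$, with $\underline{\alpha}_k \to 0$ and $\overline{\alpha}_k \to 2(\eta-\lambda)/L_{\nabla f} > 0$. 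Applying the first bullet of \cref{step_hit_interval} then yields a finite backtracking index $j_k$ with $\beta_k \underline{\rho}^{j_k} \in [\underline{\alpha}_k, \overline{\alpha}_k]$, so the line-search test passes at that index.

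Finally I would close the contradiction using the growing budget: because $j$ increases by one at every outer failure and is unbounded, there is an outer iteration at which both $\bar{\epsilon}_k$ lies below the \cref{corollary} threshold and $j$ exceeds the associated index $j_k$; there the inner loop reaches $i = j_k$, the test $\psi(\alpha_k) \leq 0$ holds, and control passes to line \ref{gd_update_step}, contradicting non-termination. I expect the main obstacle to be the bookkeeping that makes the two moving quantities compatible: $\bar{\epsilon}_k$ shrinks while the valid interval $[\underline{\alpha}_k, \overline{\alpha}_k]$ (and hence $j_k$) shifts from one outer iteration to the next, and the step size $\alpha_k$ is carried over and only shrinks across outer iterations rather than being reset. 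One must check that the window and the budget become jointly adequate and that this cumulative, never-reset geometric sequence of $\alpha_k$ still lands inside the interval; this is under control because as $\bar{\epsilon}_k \to 0$ the ratio $\overline{\alpha}_k / \underline{\alpha}_k \to \infty$, so a sequence with fixed contraction ratio $\underline{\rho}$ cannot step over the interval, exactly as quantified in \cref{step_hit_interval}.
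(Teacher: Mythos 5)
Your proposal is correct and follows essentially the same route as the paper's proof: non-termination forces $\epsilon_k,\delta_k\to 0$ and the backtracking budget $j\to\infty$, so \cref{corollary} and \cref{BacktrackError2} eventually supply a non-empty step-size interval that \cref{step_hit_interval} guarantees the geometric sequence $\beta_k\underline{\rho}^{i}$ must hit. Your contradiction framing and your explicit handling of the standing assumption $\|\nabla f(\theta_k)\|>0$ and of the never-reset, carried-over step size are slightly more careful than the paper's two-scenario sketch, but the key lemmas and the overall argument are the same.
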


\begin{proof}
If the inner loop at the line \ref{inner_loop} fails to find a step size within the given maximum number of iterations $j$, two scenarios may occur. Firstly, if $\epsilon_k$ and $\delta_k$ are not sufficiently small, they are decreased in steps \ref{BT_decrease} and \ref{BT_decrease_delta}, followed by updating the descent direction $z_k$ and a possible further decrease in accuracies $\epsilon_k$ and $\delta_k$ in step \ref{updated_descend_direction} as a part of the next iteration of the outer loop at line \ref{bt_loop}. This sequence of accuracy values will eventually become small enough to satisfy the assumptions in  \cref{corollary} and \cref{step_hit_interval}. Consequently, the required result holds. 

Secondly, if $\underline{\rho}^{j-1} \beta_k > \overline{\alpha}_k$ and the backtracking process is unsuccessful, then the maximum number of backtracking iterations $j$ goes to infinity during the iterations of the outer loop \ref{bt_loop}, and $\underline{\rho}^{j} \beta_k \to 0$. Since $\overline{\alpha}_k>0$, this implies that there exists $j_k \in \mathbb{N}_0$ such that for all $ i_k\geq j_k$, $\underline{\rho}^{i_k} \beta_k \leq \overline{\alpha}_{k}$. Therefore, the loop \cref{bt_loop} terminates in a finite process.
\end{proof}
\begin{theorem}\label{alg_covergence}
    Under Assumptions \ref{ass1} and \ref{ass2}, the iterates $\theta_k$ of \cref{HOAH_back_new} satisfy
    $$\lim_{k \rightarrow \infty}\|\nabla f(\theta_k)\| = 0.$$
\end{theorem}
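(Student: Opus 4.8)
The plan is to treat \cref{alg_covergence} as the synthesis of the two termination results (\cref{prop_gradient_terminates,inner_loop_theorem}) with the convergence analysis (\cref{convergencePK} and \cref{grad_convergence_cor}): the former guarantee that \cref{HOAH_back_new} is well defined and actually produces a sequence $\{\theta_k\}$, while the latter deliver the limit once we check that this sequence meets their hypotheses. I would first note that the preceding propositions require non-stationarity: if $\nabla f(\theta_k)=0$ at some iteration, a stationary point has already been reached, so I assume $\|\nabla f(\theta_k)\|>0$ for every $k\in\mathbb{N}_0$.

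Under this assumption, \cref{prop_gradient_terminates} ensures the \textsc{InexactGradient} loop terminates, returning $z_k$ together with tolerances $\epsilon_k,\delta_k$ for which the acceptance test $\omega\le(1-\eta)\|z_k\|$ holds; since $\omega$ bounds $\|e_k\|$ from above via \cref{errorBound}, this yields the descent-direction condition $\|e_k\|\le(1-\eta)\|z_k\|$. In particular $\|z_k\|\neq 0$, because $z_k=0$ would force $\|e_k\|=\|\nabla f(\theta_k)\|>0=(1-\eta)\|z_k\|$, contradicting the test. Next, \cref{inner_loop_theorem} guarantees the backtracking loop at line \ref{bt_loop} terminates, so a valid step $\alpha_k=\underline{\rho}^{i_k}\beta_k$ with $\psi(\alpha_k)\le 0$ and the update $\theta_{k+1}=\theta_k-\alpha_k z_k$ are produced at every iteration.

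The remaining task is to verify the hypotheses of \cref{uniform_bound}, and thereby of \cref{convergencePK}. The parameters satisfy $0<\underline{\rho}<1<\overline{\rho}$ and $0<\lambda<\eta<1$ by assumption, the update in line \ref{increase_beta_k} enforces $\beta_{k+1}=\overline{\rho}\alpha_k$, and the previous step established $\|z_k\|\neq 0$ and $\|e_k\|\le(1-\eta)\|z_k\|$. Crucially, the adaptive accuracy reduction in lines \ref{BT_decrease}--\ref{BT_decrease_delta} and inside \textsc{InexactGradient} drives $\bar{\epsilon}_k$ small enough that \cref{step_hit_interval} applies, so the smallest accepted $i_k$ satisfies $\alpha_k\le\overline{\alpha}_k$ and $\alpha_k>\underline{\rho}\,\overline{\alpha}_k$ whenever $i_k>0$; this is exactly the step-size characterization required by \cref{uniform_bound}. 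Invoking that lemma gives the uniform lower bound $\alpha_k\ge\tau>0$. Then \cref{convergencePK} yields $\lim_{k\to\infty}\|z_k\|=0$, and \cref{grad_convergence_cor} upgrades this to $\lim_{k\to\infty}\|\nabla f(\theta_k)\|=0$, completing the proof.

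I expect the main obstacle to be the reconciliation between the idealized step-size selection assumed in \cref{uniform_bound} and the actual backtracking executed by the algorithm, which only tests the computable condition $\psi(\alpha_k)\le 0$ rather than membership in $[\underline{\alpha}_k,\overline{\alpha}_k]$. The subtlety is that $\psi(\alpha_k)\le 0$ may hold for step sizes outside that interval, so the uniform lower bound $\tau$ must be argued from the second part of \cref{step_hit_interval} (for the case $i_k>0$) together with \cref{aux} (for the case $i_k=0$, where $\beta_k$ may keep growing by the factor $\overline{\rho}$). Equally delicate is confirming that the failure-driven accuracy reduction really forces $\bar{\epsilon}_k$ into the admissible range $[0,s_k)$ of \cref{corollary} in finitely many outer iterations, uniformly enough that the lower bound on $\alpha_k$ does not degrade along the sequence.
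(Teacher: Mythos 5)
Your proposal is correct and follows essentially the same route as the paper's own proof: both combine the termination guarantees (\cref{prop_gradient_terminates,inner_loop_theorem}) with the verification that the iterates satisfy the hypotheses of \cref{convergencePK}, and then conclude via \cref{grad_convergence_cor}. Your version is in fact somewhat more careful than the paper's, since you explicitly flag and resolve the reconciliation between the computable test $\psi(\alpha_k)\leq 0$ and the idealized step-size interval assumed in \cref{uniform_bound}, a point the paper's proof passes over implicitly.
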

\begin{proof}
    From \cref{inner_loop_theorem} we know that the loop at line \ref{bt_loop} terminates after a finite number of iterations and finds a suitable step size $\alpha_k$ satisfying the line search condition $\psi(\alpha_k) \leq 0$. Moreover, the update of the initial step size in line \ref{increase_beta_k}  fulfills $\beta_{k+1} = \overline{\rho} \alpha_k$. Furthermore, $z_k$ computed in \cref{Inexact_grad} satisfies  ${\|e_k\|}\leq(1-\eta){\|z_k\|}$. Hence, all the assumptions of \cref{convergencePK} are satisfied by \cref{HOAH_back_new} and convergence follows from  \cref{grad_convergence_cor}.
\end{proof}
\begin{remark}
\rvv{
The results presented in this section assume that the upper-level loss  $g$  does not depend on the parameters  $\theta$ . However, if  $g$  depends on  $\theta$  and is differentiable with respect to  $\theta$ , the hypergradient takes the following form:
\begin{equation}
    \nabla f(\theta) = \nabla (g \circ \hat{x})(\theta)= \nabla_\theta g(\hat{x}(\theta), \theta) +  \partial \hat{x}(\theta)^T \nabla_x g(\hat{x}(\theta), \theta).
\end{equation}
This modification affects the calculation of the inexact hypergradient, while the line search condition remains unchanged. This is because the bounds in \cref{computable_bounds} were derived by expanding  $g$  with respect to its first argument (i.e.,  $x$). Moreover, assuming that $\nabla_{\theta} g(\hat{x}(\theta), \theta)$ is  $L_{\nabla_{\theta} g}$-Lipschitz in $x$, the right-hand side of bound \eqref{AP_bound_hg} will include an additional term,  $L_{\nabla_{\theta} g} \epsilon_k$.
}
\end{remark}
\section{Numerical Experiments}\label{sec:numerical_experiments}

In this section, we present the numerical results of our proposed algorithm MAID (\cref{HOAH_back_new}) and compare it with HOAG \cite{Pedregosa} and the dynamic derivative-free algorithm DFO-LS \cite{DFO}. For HOAG, we take the adaptive step size strategy used in the numerical experiments of \cite{Pedregosa} with accuracies either following a geometric sequence $\epsilon_k = 0.9^k \epsilon_0$, a quadratic sequence $\epsilon_k = {k^{-2}} \epsilon_0$, or a cubic sequence $\epsilon_k = {k^{-3}} \epsilon_0$ for $k=1,2, \dots$. The implementation of HOAG\footnote{\url{https://github.com/fabianp/hoag}} and the Fast Iterative Shrinkage-Thresholding Algorithm (FISTA) variant of DFO-LS\footnote{\url{https://github.com/lindonroberts/inexact_dfo_bilevel_learning}} is based on the available code from the corresponding papers \cite{Pedregosa} and \cite{DFO}, respectively. \textcolor{red}{We will make the codes of our implementations available on GitHub upon the acceptance of the paper}. Moreover, we use warm-starting in the lower-level iterations, which means we save and use the lower-level solution as the initial point for the next call of the lower-level solver. This strategy is also utilized in DFO-LS and HOAG. For the constant $L_{H^{-1}}$ in the bound \eqref{AP_bound_hg} used in \cref{Inexact_grad}, we approximate it as $$\frac{\|\nabla_x^2 h(\tilde{x}(\theta),\theta)^{-1} \nabla g(\tilde{x}(\theta)) - \nabla_x^2 h(\tilde{x}(\theta),\theta)^{-1} u\|}{\|\nabla g(\tilde{x}(\theta)) - u\|},$$ where $u \sim \mathcal{N}(0,1)$ is a random perturbation of $\nabla g(\tilde{x}(\theta))$. We then store the maximum value of this approximation across the upper-level iterations. In this approach, since we have already calculated $\nabla_x^2 h(\tilde{x}(\theta),\theta)^{-1} \nabla g(\tilde{x}(\theta))$ to compute the hypergradient, we possess an efficient approximation that improves by taking its maximum across upper-level iterations. We use a similar process for estimating $L_J$. \rv{Moreover, we estimate the $\|\nabla_{x\theta}^2 h(x(\theta),\theta)\|$ on the right-hand side of \eqref{AP_bound_hg} using one iteration of the power method (\rvv{to compute largest  eigenvalue of $\nabla_{x\theta}^2 h(x(\theta),\theta)^T\nabla_{x\theta}^2 h(x(\theta),\theta)$}).} We include these calculations in our lower-level computational cost. Note that in all experiments where the upper-level loss is convex, the bound \eqref{psi_convex} from \cref{inexact_bt_convex} will be used in step \ref{descent_condition_check} of \cref{HOAH_back_new}.
\subsection{Validating the performance with known optimal parameters}\label{subsec:quad}
Similar to \cite{boundMatthias}, we first consider a simple linear least-squares problem taken from \cite{Li_Gu_Huang_2022}: 
\begin{subequations}\label{Quadratic}
\begin{align}
    \min_{\theta \in \mathbb{R}^{10}} f(\theta) &\coloneqq \|A_1\hat{x}(\theta) - b_1\|^2  \\
    s.t. \quad \hat{x}(\theta) &\coloneqq \arg\min_{x \in \mathbb{R}^{10}} \|A_2x + A_3\theta - b_2\|^2, 
\end{align}
\end{subequations}
where $A_i \in \mathbb{R}^{1000 \times 10}$ have random entries uniformly distributed between 0 and 1, and $b_i \in \mathbb{R}^{1000}$ are given by $b_1 = A_1\hat{x}_1 + 0.01y_1$ and $b_2 = A_2\hat{x}_2 + A_3\Bar{\theta} + 0.01y_2$, where $\hat{x}_1, \hat{x}_2,$ and $\Bar{\theta} \in \mathbb{R}^{10}$ have i.i.d.\ entries uniformly distributed between 0 and 1, and $y_1, y_2 \in \mathbb{R}^{1000}$ are independent standard Gaussian vectors with entries from $\mathcal{N}(0,1)$. For our experiments, we pick $\theta_0$ as the vector of all ones.
In this problem, we have the capability to analytically compute $\hat{x}(\theta)$, $\nabla f(\theta)$, and all of the Lipschitz constants ($L_{\nabla f}$, $L_{\nabla g}$, $L_{H^{-1}}$, and $L_J$), along with determining the optimal $\theta^*$. Therefore, this problem serves as a benchmark to evaluate the performance of our inexact algorithm.

For the experimental evaluation, we have chosen several configurations for the parameters in our algorithm. Specifically, these encompass $\epsilon_0 = \delta_0 = 10^{-1}$, $\epsilon_0 = \delta_0 = 10^{-3}$, $\epsilon_0 = \delta_0 = 10^{-5}$. Moreover, in MAID, we set the hyperparameters $\overline{\rho} = \frac{10}{9}$, $\underline{\rho} = 0.5$, $\overline{\nu} = 1.25$, and $\underline{\nu} = 0.5$. Across all instances of fixed accuracy, the line search mechanism of MAID ($\psi(\alpha_k) \leq 0$) is employed to determine the step size. Subsequently, we employ FISTA adapted to strongly convex problems to solve the lower-level problem \cite{chambolle_pock_2016}.

Since the main computational cost involved in solving bilevel optimization problems lies in solving the lower-level problems, we have imposed a cap on the total number of lower-level iterations. 
Furthermore, recognizing the computational analogy between the cost of each CG iteration in the IFT approach, dominated by a single Hessian-vector product, and each iteration of the lower-level solver (e.g., FISTA), dominated by a single gradient evaluation, we treat CG iterations with comparable significance to that of an iteration of the lower-level solver. Consequently, the amalgamation of lower-level solver iterations and CG iterations is regarded as constituting the cumulative computational cost of the lower-level phase. For solving the problem \eqref{Quadratic}, we set a termination budget of $1.5 \times 10^{5}$ lower-level iterations.

\begin{figure}[tbhp]
\centering \subfloat[Accuracy $\epsilon$ of the lower-level solver in each upper-level iteration]{\label{fig:QD_eps}\includegraphics[width = 0.40\textwidth]{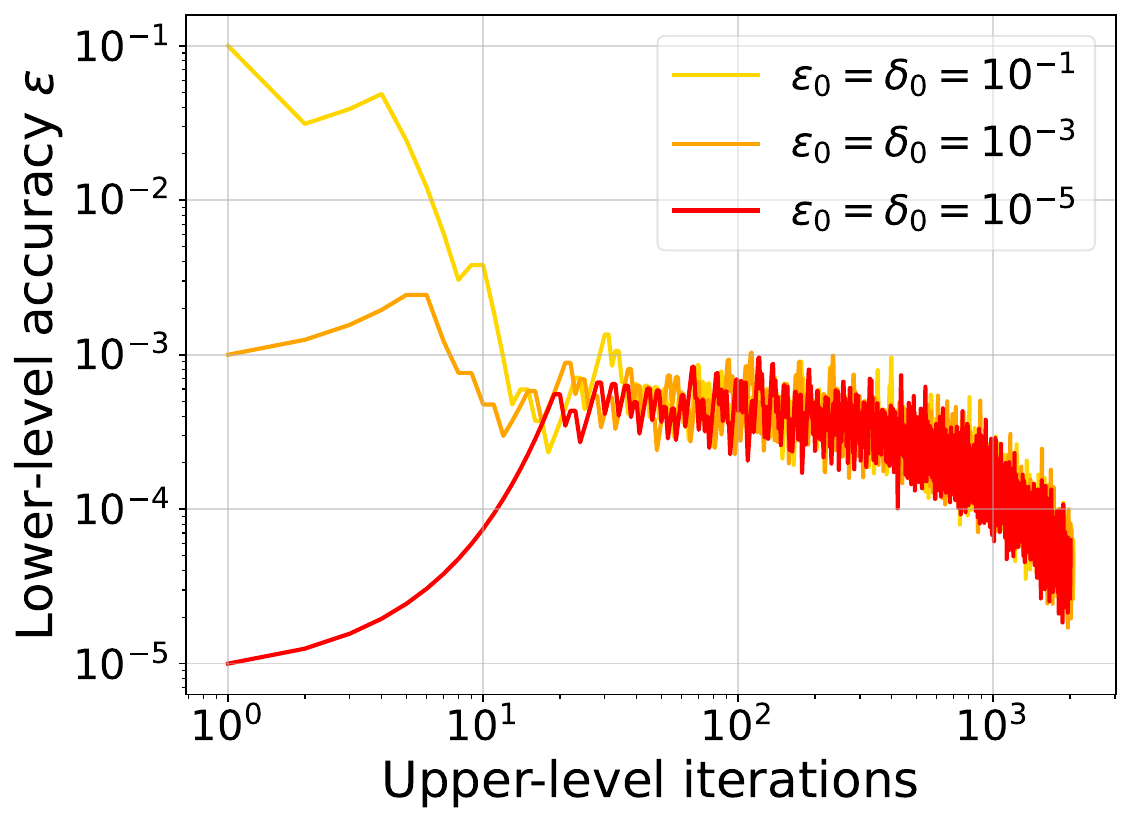}} 
\hspace{5pt}\subfloat[Upper-level loss vs total calls of linear solver and lower-level solver]{\label{fig:QD_LL_CG}\includegraphics[width = 0.40\textwidth]{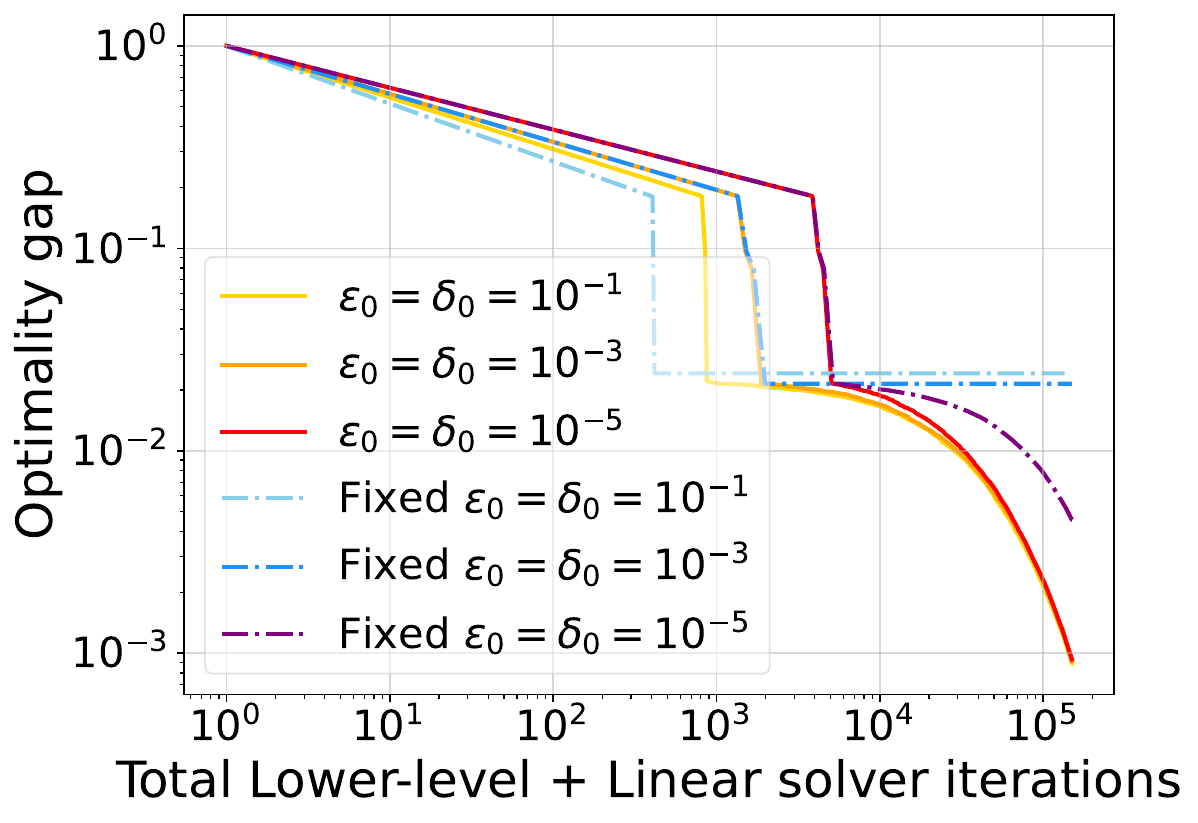}}
\caption{The value of $\epsilon$ per upper-level iteration, alongside the performance and computational cost of MAID, is illustrated across various settings for solving the problem \eqref{Quadratic}. All MAID configurations achieve lower loss values compared to the high fixed accuracy, and they do so at a lower computational cost. Furthermore, they consistently converge to the same range of the required accuracy $\epsilon = \delta$, regardless of the initial choices of $\epsilon_0$.}
\label{fig:QD_eps_delta_loss}
\end{figure}

\Cref{fig:QD_eps} illustrates the determination of the necessary values for $\epsilon$ in solving the problem~\eqref{Quadratic} through the MAID algorithm. As shown in \cref{fig:QD_eps}, irrespective of the initial $\epsilon_0 = \delta_0$ as the accuracy of the lower-level solver and the linear solver, respectively, MAID adaptively reduces $\epsilon$ and $\delta$, ultimately opting for accuracies around $2 \times 10^{-5}$. 

\cref{fig:QD_LL_CG} illustrates the computational advantages of adaptivity in MAID compared to fixed low and high accuracies, with the main computational costs stemming from lower-level and linear solver iterations. Setting the low fixed accuracy to $\epsilon = 10^{-1}$ and $\epsilon = 10^{-3}$ results in loss stalling after a few upper-level iterations due to insufficient accuracy for line search progression. Conversely, using the higher fixed accuracy of $\epsilon = 10^{-5}$ yields a higher loss than dynamic configurations, due to exhausting the total lower-level iteration budget. This underscores the importance of the adaptive approach in MAID, as the suitable fixed accuracy is not known a priori; it may be too small, leading to failure (e.g., $\epsilon = 10^{-1}$), or require significantly higher computational costs for progress (e.g., $\epsilon = 10^{-5}$). Overall, dynamic configurations successfully solve problem \eqref{Quadratic} with lower computational costs compared to a fixed accuracy of $10^{-5}$, regardless of the starting accuracy.
\begin{figure}[tbhp]
\centering {\includegraphics[width = 0.4\textwidth]{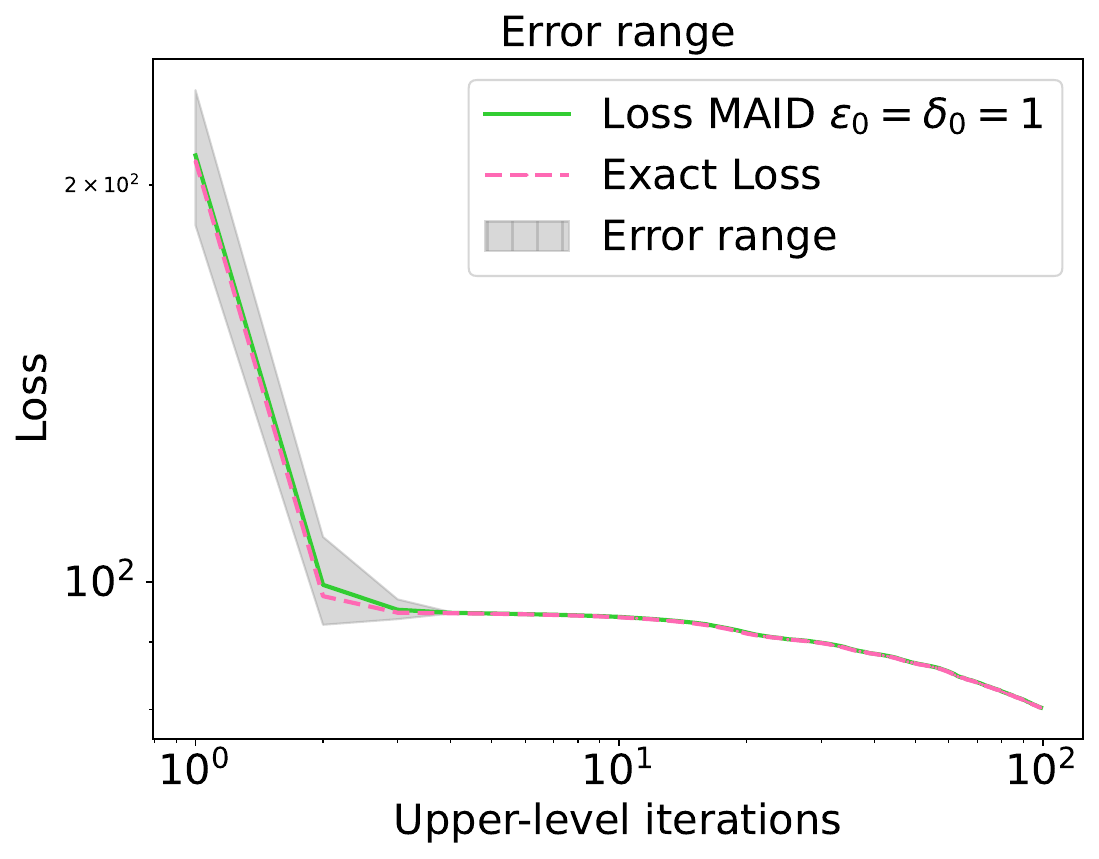}} 
\caption{The range of error in upper-level function evaluation given error bounds \eqref{upperApprox} and \eqref{convexApprox}, inexact loss given the approximated lower-level solution, and exact loss evaluation $f(\theta) = g(\hat{x}(\theta))$.}
\label{fig:QD_error}
\end{figure}
While in \cref{fig:QD_LL_CG} we plot the optimality gap, it is also possible to compute the exact solution of the lower-level problem in the specific problem \eqref{Quadratic}. This enables the calculation of the precise loss at each upper-level iteration. This ability allows us to verify both the sufficient decrease condition and the progress of MAID. To facilitate this verification, we present the exact loss alongside the loss obtained using MAID with $\epsilon_0 = \delta_0 = 1$ in \cref{fig:QD_error}. Additionally, this figure visually represents the bounds for inexact loss evaluation, as defined by \eqref{upperApprox} and \eqref{convexApprox}, wherein the area between these bounds encapsulates both exact and inexact loss values. Furthermore, as observed in \cref{fig:QD_error}, enhancing the accuracy of the lower-level solver leads to a narrower interval between the upper and lower bounds, thereby bringing the exact and inexact loss values closer. 
\subsection{Multinomial logistic loss}\label{subsec:regression}
In order to compare MAID with HOAG \cite{Pedregosa} as a well-known gradient-based algorithm with inexact hypergradients, we replicate the multinomial logistic regression bilevel problem used in \cite{Pedregosa, maclaurin2015gradientbased} on the MNIST\footnote{\url{http://yann.lecun.com/exdb/mnist/}} dataset. Similar to \cite{Pedregosa}, we take the entire $M = 60000$ training samples of MNIST and rescale them to $12\times 12$ pixels. Moreover, we utilize all $N = 10000$ samples of the validation set for the upper-level problem. The bilevel problem corresponding to this model has the following form:
\begin{subequations}\label{multilogistic}
\begin{align}
    \min_{\theta \in \mathbb{R}^{d}} f(\theta) &\coloneqq \sum_{i=1}^N \Psi(\Bar{b}_i, \Bar{a}_i^T\hat{x}(\theta))  \\
    s.t. \quad \hat{x}(\theta) &\coloneqq \arg\min_{x \in \mathbb{R}^{p\times q}} \sum_{j=1}^M \Psi(b_j, a_j^Tx) + \frac{1}{2}\sum_{k=1}^{p}\sum_{l=1}^{q} e^{\theta[{(k-1)q+l}]} x_{k,l}^2, 
\end{align}
\end{subequations}
where $p = 12 \times 12 = 144$ represents the number of features, $q = 10$ represents the number of classes, and $d = p \times q = 1440$ denotes the number of parameters. $\Psi$ represents the multinomial logistic loss \cite[Section 10.3]{pml1Book}, while $a_j \in \mathbb{R}^p$ and $b_j \in \mathbb{R}^q$ correspond to the training images and labels for each $1 \leq j \leq M$. Similarly, $\Bar{a}_i \in \mathbb{R}^p$ and $\Bar{b}_i \in \mathbb{R}^q$ denote the validation images and labels in the upper-level problem for each $1 \leq i \leq N$. To solve this problem, we employ MAID, alongside HOAG using its heuristic adaptive step size with default parameters. We set the initial accuracy for both algorithms to $\epsilon_0 = \delta_0 = 10^{-1}$, as specified in \cite{Pedregosa}. Moreover, to ensure a fair comparison, similar to HOAG, we set the hyperparameters $\overline{\rho} = \frac{10}{9}$, $\underline{\rho} = 0.5$, $\overline{\nu} = 1.05$, and $\underline{\nu} = 0.5$. In HOAG, the initial step size $\beta_0 = \sqrt{d}/\|z_0\|$ was determined using the same strategy as outlined in \cite{Pedregosa}. We set the initial upper-level step size $\alpha_0$ in MAID equal to the initial step size in HOAG. Additionally, we consider a fixed total lower-level budget of $6 \times 10^{5}$ iterations and a maximum of $300$ upper-level iterations.

\begin{figure}[tbhp]
    \centering
    \subfloat[\footnotesize Upper-level step size of MAID vs HOAG]{\label{fig:logistic_step}\includegraphics[width=0.326\textwidth]{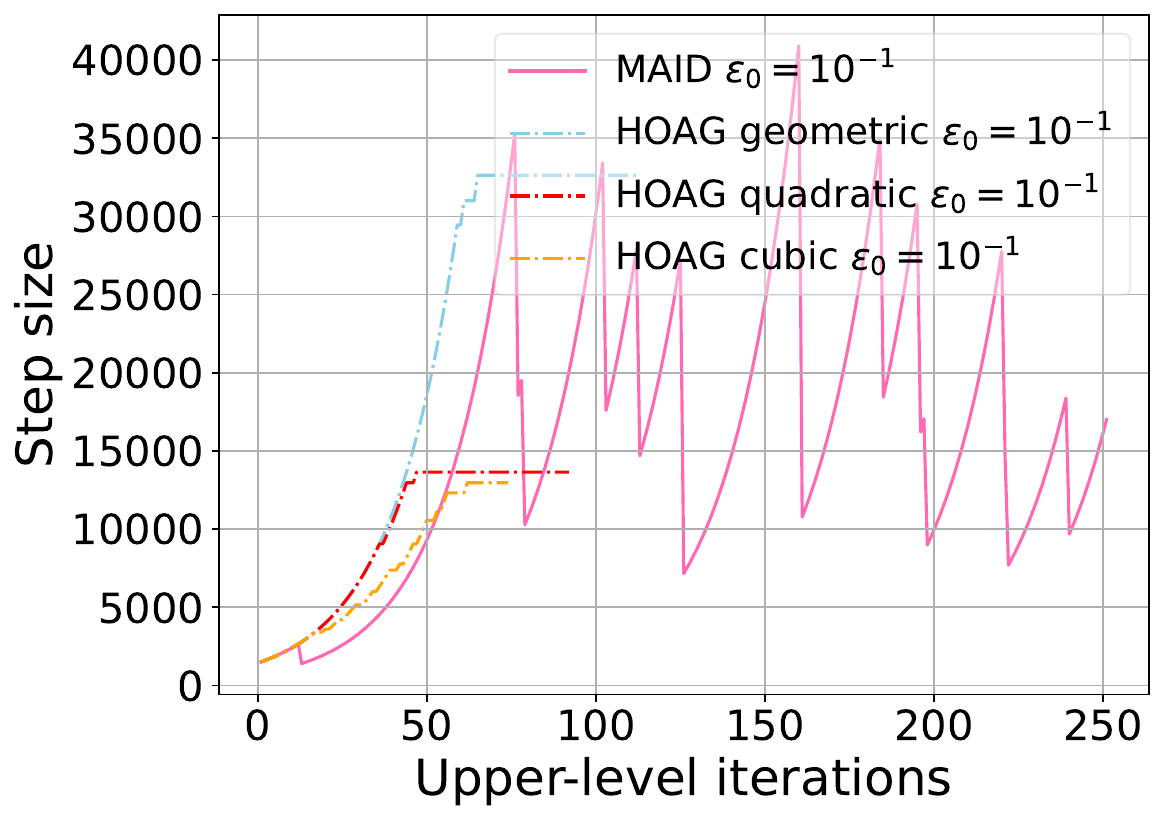}} \hfill \vspace{5pt}  
    \subfloat[\footnotesize Accuracy of the lower-level solver in each upper-level iteration]
{\label{fig:logistic_eps}\includegraphics[width=0.326\textwidth]{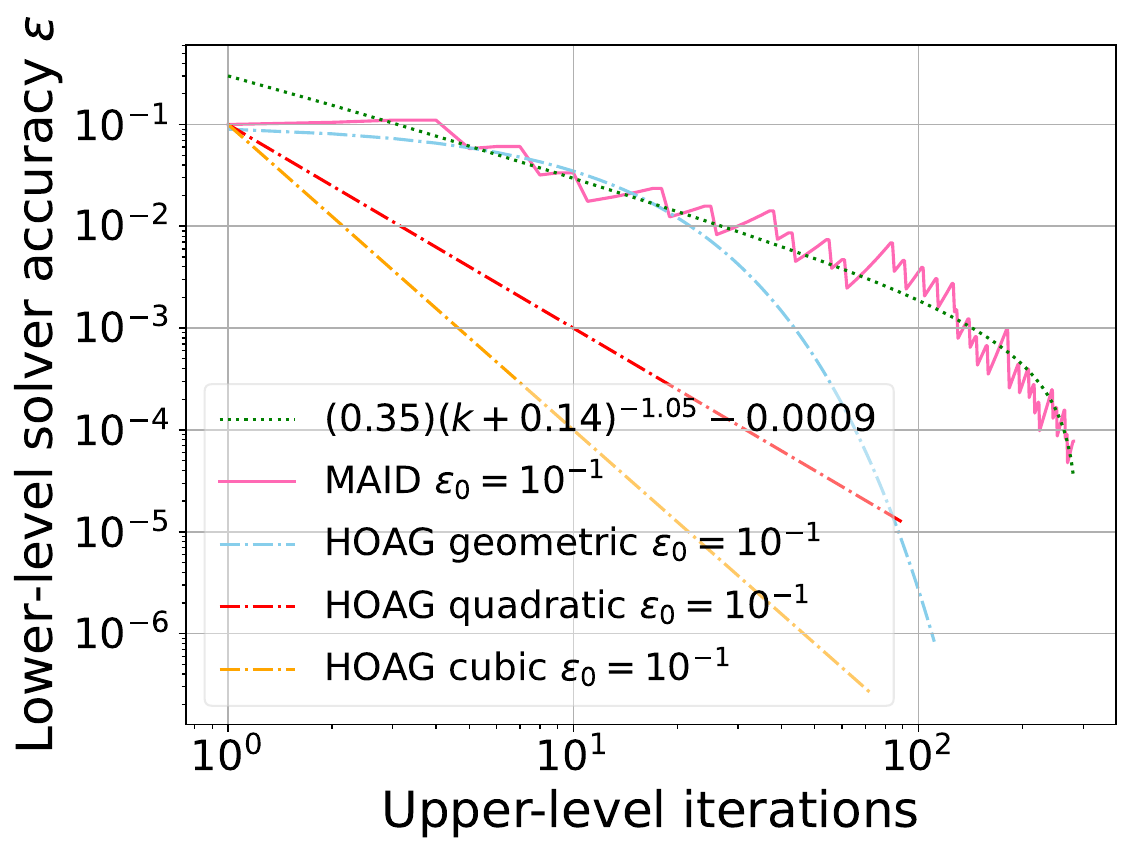}}
     \hfill 
     \vspace{5pt}  
    \subfloat[\footnotesize Upper-level loss vs total lower-level computational cost]{\label{fig:logistic_loss_LL}\includegraphics[width=0.326\textwidth]{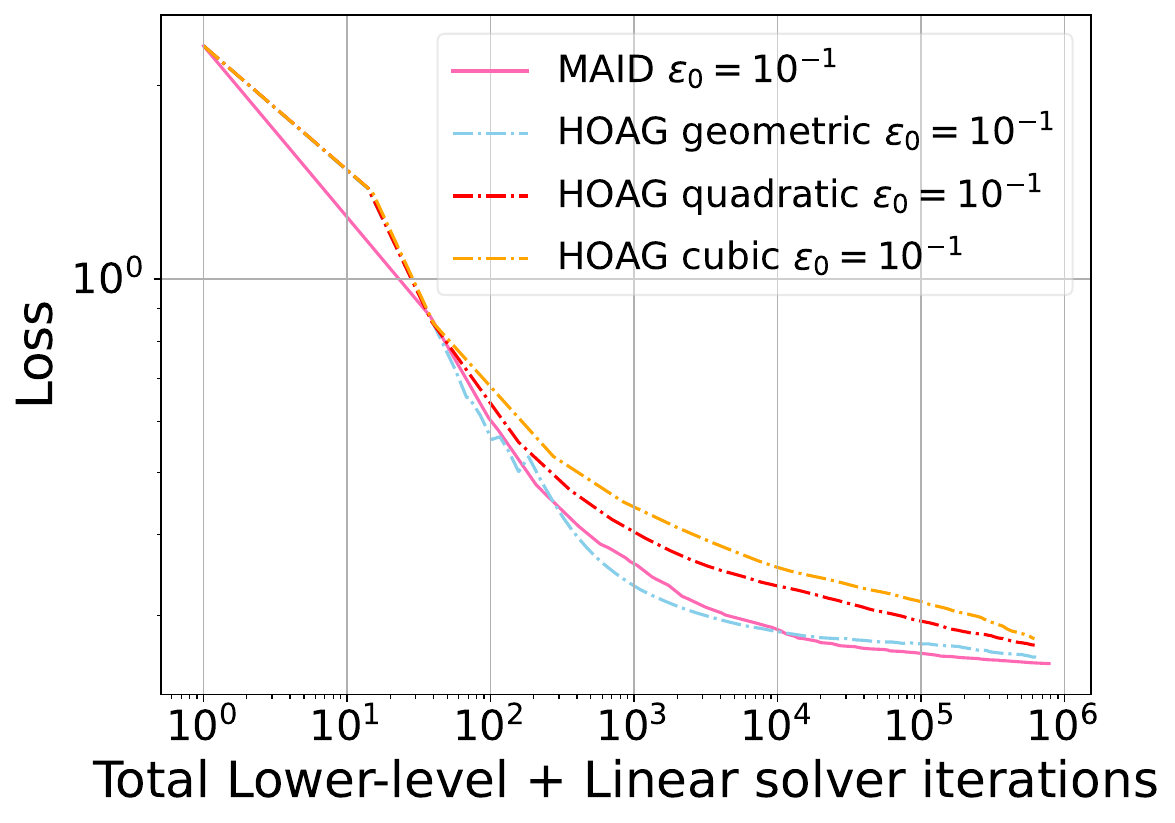}}
    \caption{The value of $\epsilon$, and upper-level loss per total lower-level computational cost in the algorithms MAID and HOAG for solving the problem \eqref{multilogistic}. It illustrates that MAID adaptively adjusts the upper-level step size and the required accuracy for the solving the lower-level problem, while HOAG with the adaptive step size does not adjust the step size after a few iterations and exhibits a decreasing sequence for the accuracy $\epsilon$. In all HOAG and MAID settings, $\epsilon_0 = \delta_0$ is set.}
    \label{fig:logistic_step_eps_loss}
\end{figure}
Considering the fixed computational budget, \cref{fig:logistic_step_eps_loss} offers a comparison between HOAG and MAID in terms of upper-level step size, the accuracy of the lower-level solver, and loss. Throughout this experiment, all configurations of HOAG, employing the adaptive step size strategy from \cite{Pedregosa} and depending on their accuracy regime, demonstrated the ability to effectively adjust the step size and accuracy, leading to loss reduction in \eqref{multilogistic} within a few upper-level iterations and their corresponding lower-level computational cost. However, HOAG's adaptive step size eventually stops changing and becomes fixed at different levels, depending on the accuracy regime, while diminishing accuracy for the lower-level solver to negligible values. As a result, the lack of adaptivity in accuracy and reaching high accuracies lead to slower loss reduction using the quadratic and cubic sequences of accuracies.

\begin{figure}[tbhp]
    \centering
\includegraphics[width=0.326\textwidth]{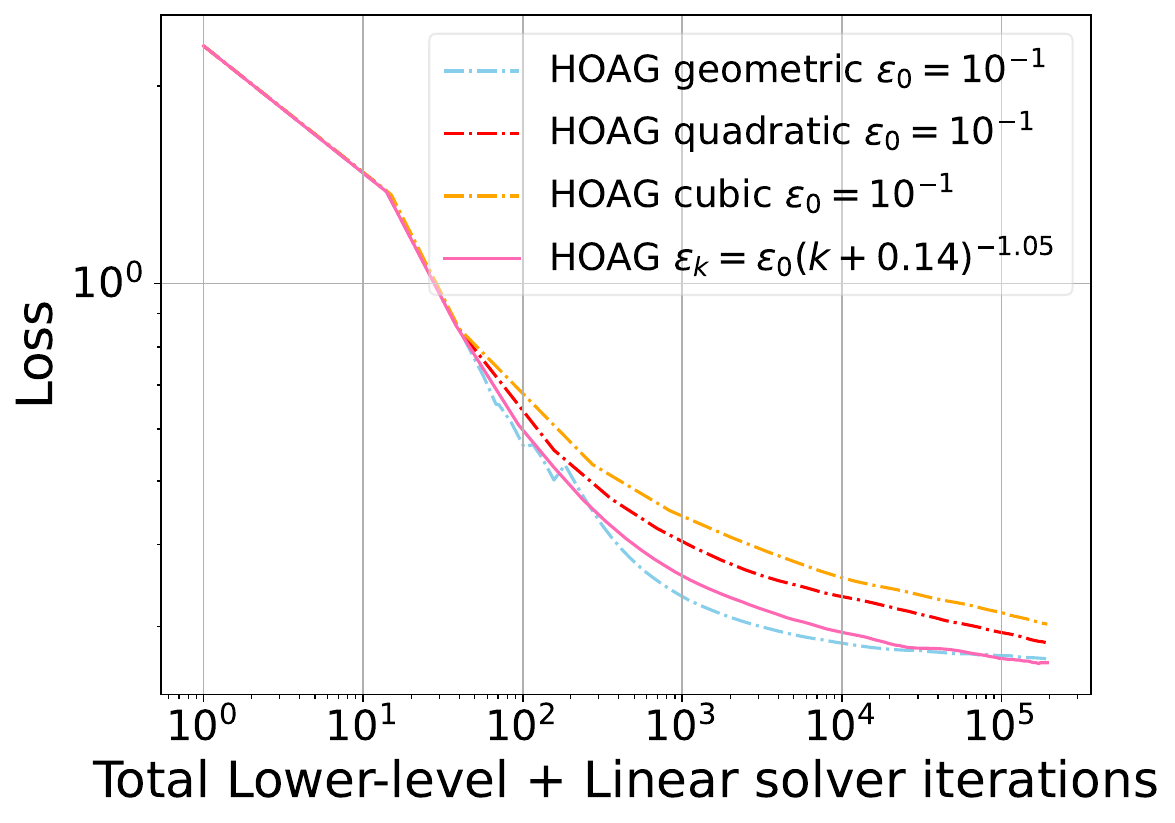}
    \caption{\rv{Comparison of HOAG with geometric, quadratic, and cubic accuracy sequences against a custom sequence obtained by fitting an accuracy regime to the sequence chosen by MAID.}}
    \label{fig:logistic_loss_costume}
\end{figure}
In contrast, while using the same initial step size and accuracy as HOAG, MAID consistently identified an appropriate range for accuracy and upper-level step size. This led to driving the loss to lower values than those achieved by HOAG within the same budget. Moreover, upon fitting a polynomial regime to the accuracy sequence taken by MAID, the sequence $\epsilon_k = {0.35}{(k+0.14)^{-1.05}}\epsilon_0 - 0.0009$ for $k=1,2,\dots$ could potentially be a suitable option for HOAG when compared to the three other named sequences. \rv{\cref{fig:logistic_loss_costume} supports this guess, showing that HOAG with the specified sequence performs comparably to the best of the named sequences, which in this case is geometric.} Importantly, since this sequence is summable, the theoretical convergence of HOAG also holds. Nevertheless, knowing such a sequence a priori is not straightforward, and MAID finds it adaptively.

In summation, MAID demonstrates superior adaptability and performance compared to HOAG, achieving lower loss values within the same lower-level computational budget. MAID achieved this by adaptively adjusting accuracy and step size, and showcased an enhanced efficiency. Additionally, the step size selection in MAID comes with a theoretical convergence guarantee, whereas the strategy in HOAG is heuristic.

\subsection{Total variation denoising}\label{subsec:TV}

In this part, we consider the smoothed version of the Rudin-Osher-Fatemi (ROF) image-denoising model \cite{RUDIN1992259} as implemented in \cite{DFO}. Here, a bilevel problem is formulated to learn the smoothing and regularization parameters of total variation (TV) regularization. In contrast to the first numerical experiment in this section, access to the optimal hyperparameters is not possible a priori.

Similar to \cite{DFO}, we apply this model to $2D$ images and the training set is $m = 25$ images of the Kodak dataset\footnote{\url{https://r0k.us/graphics/kodak/}} resized to $96 \times 96$ pixels, converted to grayscale, and Gaussian noise with $\mathcal{N}(0, \sigma^2)$ with $\sigma = 0.1$ is added. The bilevel problem we examine is as follows:
\begin{subequations}\label{bilevel_TV}
    \begin{align}
    &\min_\theta \frac{1}{m} \sum_{t=1}^{m} \frac{1}{2}\|\hat{x}_t(\theta) - x^*_t \|^2 \label{upper_TV}\\ & s.t.  \quad \hat{x}_t(\theta) = \arg \min_x {\frac{1}{2}\|x-y_t\|^2} + e^{\theta[1]} {\sum_{i,j} \sqrt{|\nabla x_i|^2+|\nabla x_j|^2 + (e^{\theta[2]})^2}}\label{lower_TV},
\end{align}
\end{subequations}

Here, $\nabla x_i$ and $\nabla x_j$ represent the $i$-th and $j$-th elements of the forward differences of $x$ in the horizontal and vertical directions, respectively. It is important to note that this problem satisfies the assumption \ref{ass1}. To initiate the process, we set $\theta_0 = (-5, -5)$ and utilize Algorithm \ref{HOAH_back_new} \rv{with hyperparameters $\overline{\rho} = \frac{10}{9}$, $\underline{\rho} = 0.5$, $\overline{\nu} = 1.25$, and $\underline{\nu} = 0.5$}, along with the FISTA variant of DFO-LS with dynamic accuracy \cite{DFO} to solve the problem \eqref{bilevel_TV}. The DFO-LS with dynamic accuracy is a derivative-free bilevel algorithm that employs FISTA for solving the lower-level problem with an accuracy determined dynamically from the trust region radius in the derivative-free optimization algorithm for solving least-squares \cite{cartis2017derivativefree} in the upper-level problem. This algorithm serves as an efficient approach for solving bilevel problems and in particular for the problem \eqref{bilevel_TV}. However, it should be noted that, as discussed in \cite{Crockett_2022}, in contrast with gradient-based algorithms, it may not scale well with the number of hyperparameters due to the nature of the derivative-free algorithm used for the upper-level problem. To maintain consistency with the first numerical example, we set a limit on the total number of lower-level iterations in this case.

\begin{figure}[tbhp]
\centering \subfloat[\footnotesize Accuracy of the lower-level solver in each upper-level iteration]{\label{fig:TV_eps}\includegraphics[width = 0.4\textwidth]{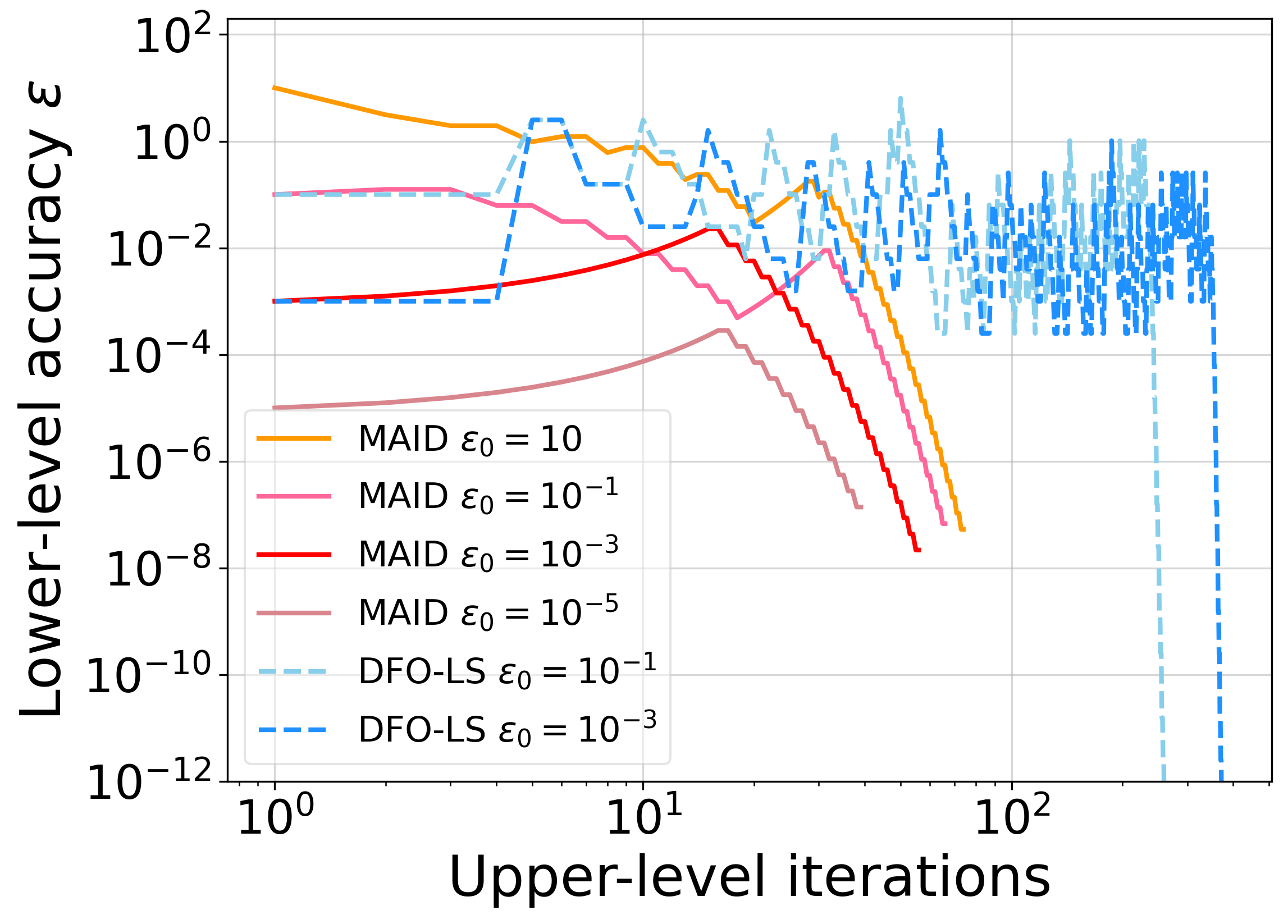}}\hspace{10pt}
\subfloat[\footnotesize Upper-level loss vs total lower-level computational cost]{\label{fig:TV_loss_ll_cg}\includegraphics[width = 0.4\textwidth]{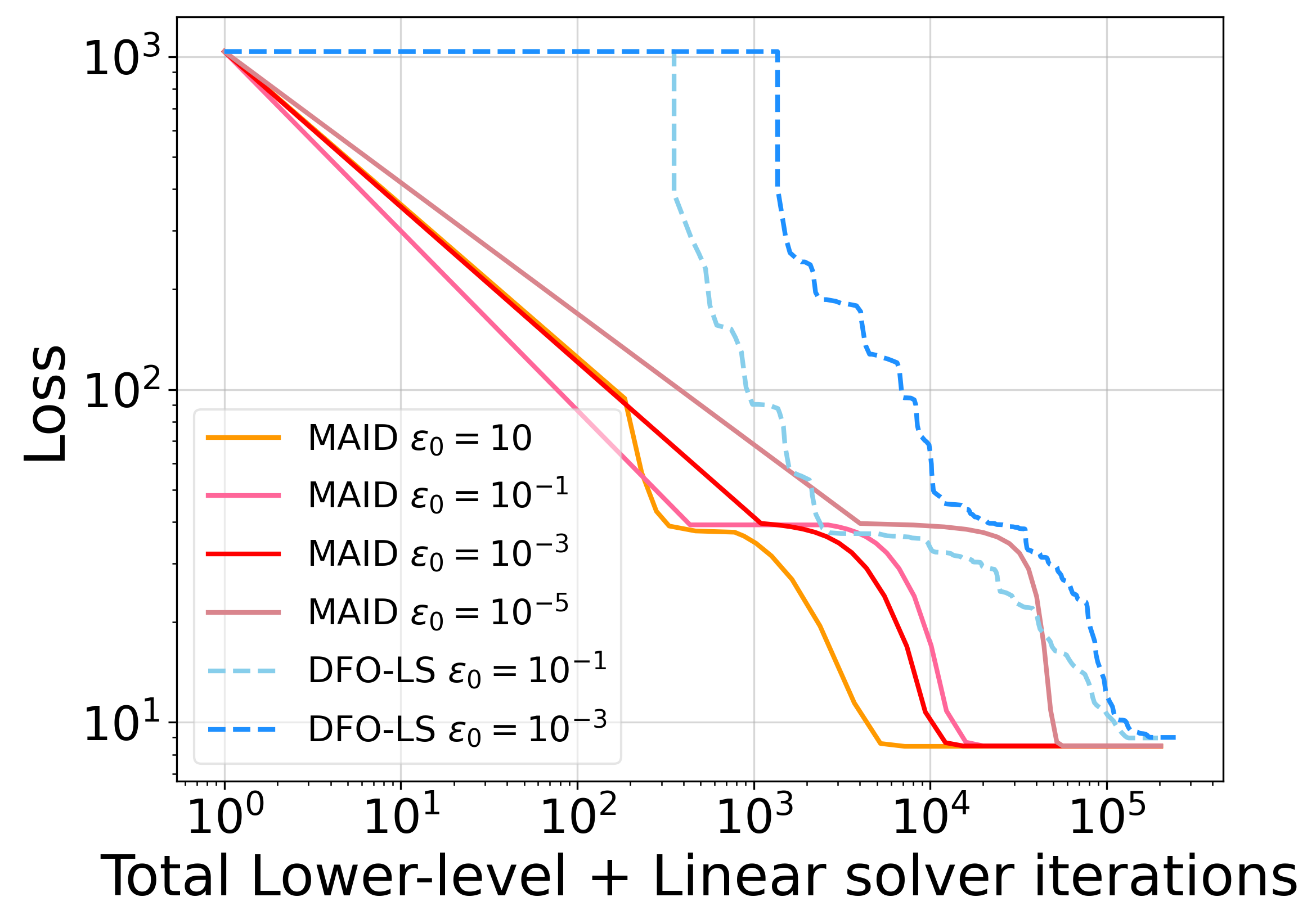}}
\caption{The values of $\epsilon$, and upper-level loss per total lower-level computational cost in the algorithms MAID and DFO-LS for solving the problem \eqref{bilevel_TV}. In various MAID settings, $\epsilon_0 = \delta_0$ is set.}
\label{fig:TV_step_eps_loss}
\end{figure}
Illustrated in \cref{fig:TV_eps}, MAID demonstrates that, regardless of the starting accuracy of the lower-level solver $\epsilon$---chosen with different orders of magnitude---it adapts the accuracy and converges to the same range after a few upper-level iterations. This highlights the robustness of MAID in the selection of an initial accuracy, enabling it to find values as large as necessary and as small as needed to make progress in backtracking line search. On the other hand, the FISTA variant of DFO-LS initially selects an equal or higher value of $\epsilon$ compared to MAID when using the same starting accuracies. However, at a certain point, it begins to significantly decrease $\epsilon$. 

In terms of loss, as indicated in \cref{fig:TV_loss_ll_cg}, with an equal total lower-level computational budget (assuming CG iterations of DFO-LS as zero), MAID surpasses the performance of the FISTA variant of DFO-LS, even in the case where the initial $\epsilon$ is chosen too small. All different setups of MAID converged to the loss value of $8.49$, while DFO-LS with $\epsilon_0= 10^{-1}$ and $\epsilon_0= 10^{-3}$ reached $8.97$ and $9.00$, respectively. Furthermore, it is important to note that DFO-LS does not utilize any hypergradient information, which could place it at a disadvantage when compared to gradient-based methods.

\rv{To further compare MAID with the existing inexact first-order method HOAG, we selected the best initial setting from \cref{fig:TV_loss_ll_cg} (i.e., $\epsilon_0 = \delta_0 = 10$) and tuned HOAG with geometric, quadratic, and cubic accuracy sequences. \cref{fig:TV_step_eps_loss_HOAG} presents a performance comparison between these variants of HOAG and MAID.
As shown in \cref{fig:TV_eps_HOAG}, MAID adaptively selected the required accuracy, while the HOAG variants followed three different a priori accuracy sequences, each giving different results in \cref{fig:TV_loss_ll_cg_HOAG}. After $10^4$ lower-level computations (lower-level solver plus linear solver iterations), both MAID and the quadratic variant HOAG converged. Extending the computational budget allowed HOAG with cubic accuracy to converge as well, though at a significantly higher computational cost. In contrast, HOAG with a geometric sequence converged to a higher loss. These outcomes, illustrated in \cref{fig:TV_step_eps_loss_HOAG}, highlight MAID's adaptability and robustness in selecting suitable accuracy without tuning or requiring prior knowledge. Moreover, MAID's faster convergence with lower computational costs underscores its better performance compared to HOAG for solving \cref{bilevel_TV}.}

\begin{figure}[tbhp]
\centering \subfloat[\footnotesize \rv{Accuracy of the lower-level solver in each upper-level iteration}]{\label{fig:TV_eps_HOAG}\includegraphics[width = 0.4\textwidth]{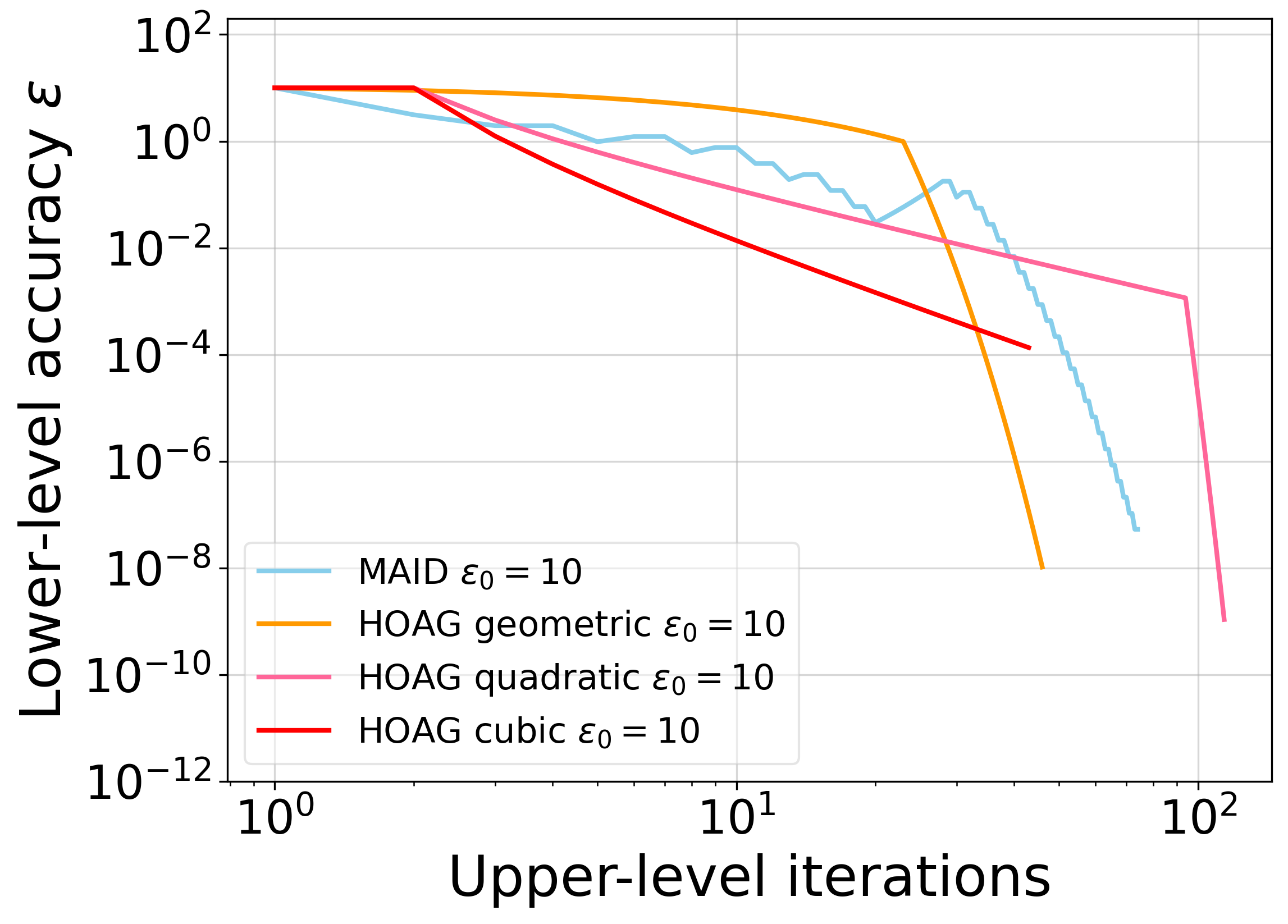}}\hspace{10pt}
\subfloat[\footnotesize \rv{Upper-level loss vs total lower-level computational cost}]{\label{fig:TV_loss_ll_cg_HOAG}\includegraphics[width = 0.4\textwidth]{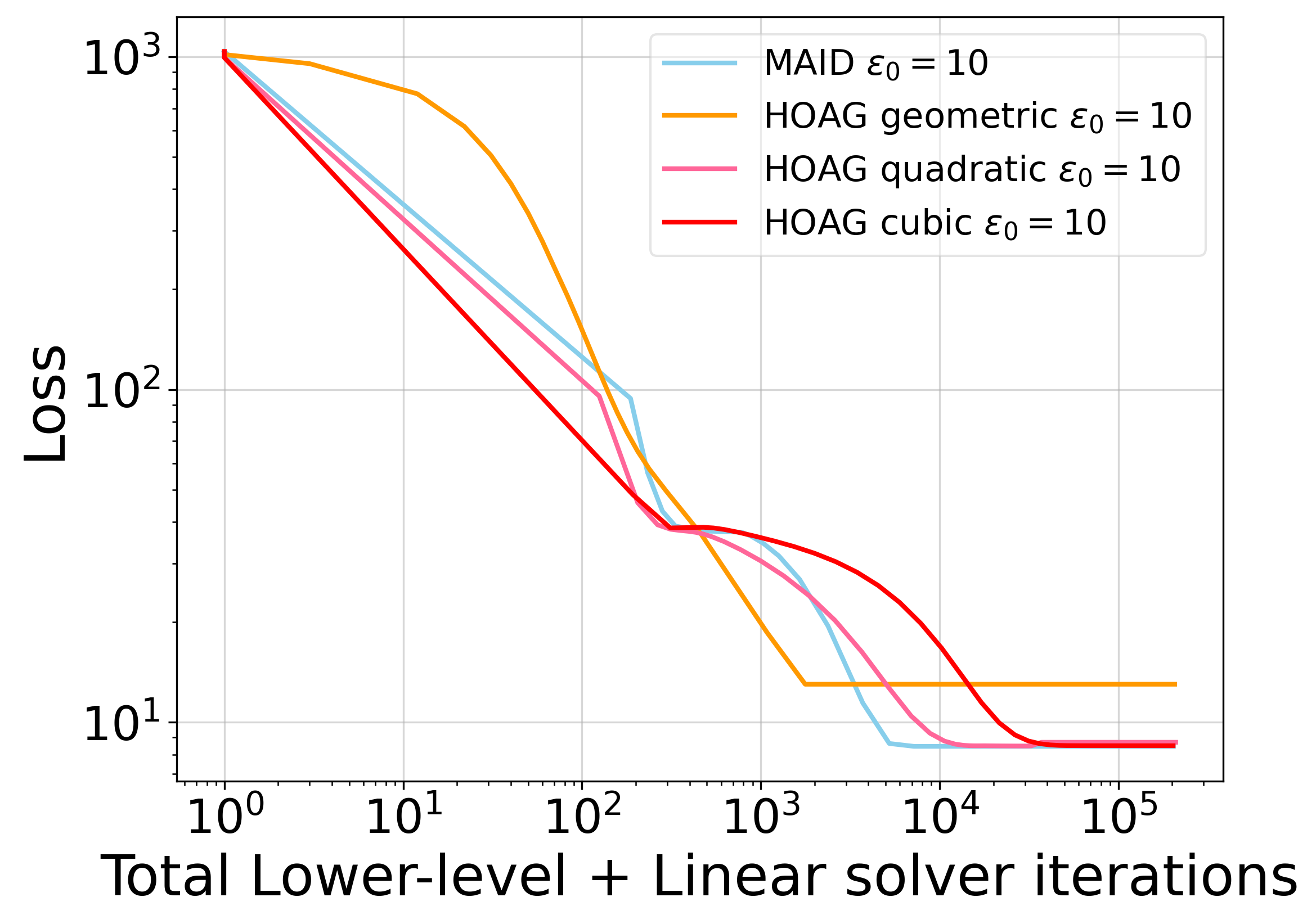}}
\caption{\rv{The values of $\epsilon$, and upper-level loss per total lower-level computational cost in the algorithms MAID and HOAG for solving the problem \eqref{bilevel_TV}. In all settings of HOAG and MAID $\epsilon_0 = \delta_0$ is set.}}
\label{fig:TV_step_eps_loss_HOAG}
\end{figure}
Now, to evaluate the effectiveness of the learned parameters obtained through solving \eqref{bilevel_TV} using MAID and dynamic DFO-LS, We solved the training problem \eqref{bilevel_TV} for $20$ images of the Kodak dataset, each of size $256 \times 256$ with Gaussian noise $\sigma = 0.1$, using MAID with $\epsilon_0 = \delta_0 = 10^{-1}$ and the FISTA variant of dynamic DFO-LS with $\epsilon_0 = \Delta_{0} = 10^{-1}$, where $\Delta_{0}$ represents the trust region radius in the first upper-level iteration. Subsequently, we applied these learned parameters to the lower-level total variation denoising problem for a test noisy image of size $256 \times 256$ and standard deviation $\sigma = 0.1$.
\begin{figure}[tbhp]
\centering 
\subfloat[\centering Ground truth]{\label{fig:Original_TV}\includegraphics[width = 0.24\textwidth]{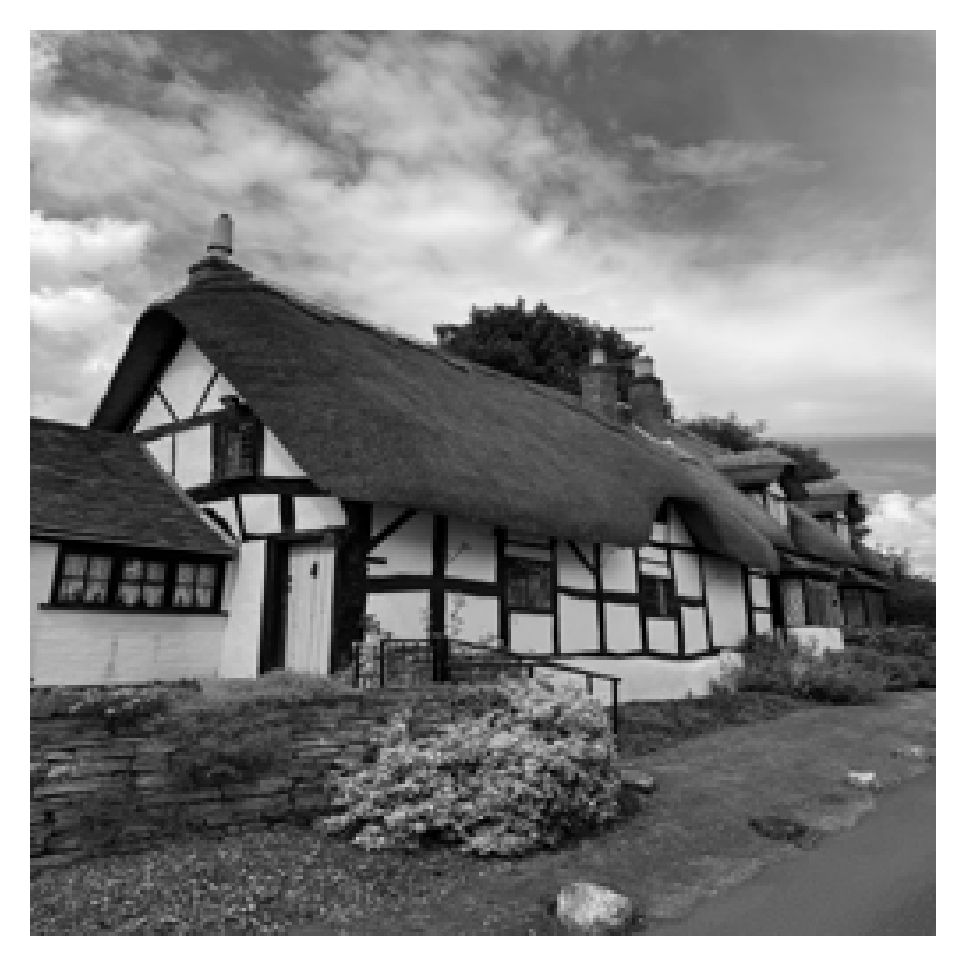}} \hfill \hspace{2pt}
\subfloat[\centering Noisy image, 
$\text{PSNR} = 19.99 \ \text{dB}$]{\label{fig:Noisy_TV}\includegraphics[width = 0.24\textwidth]{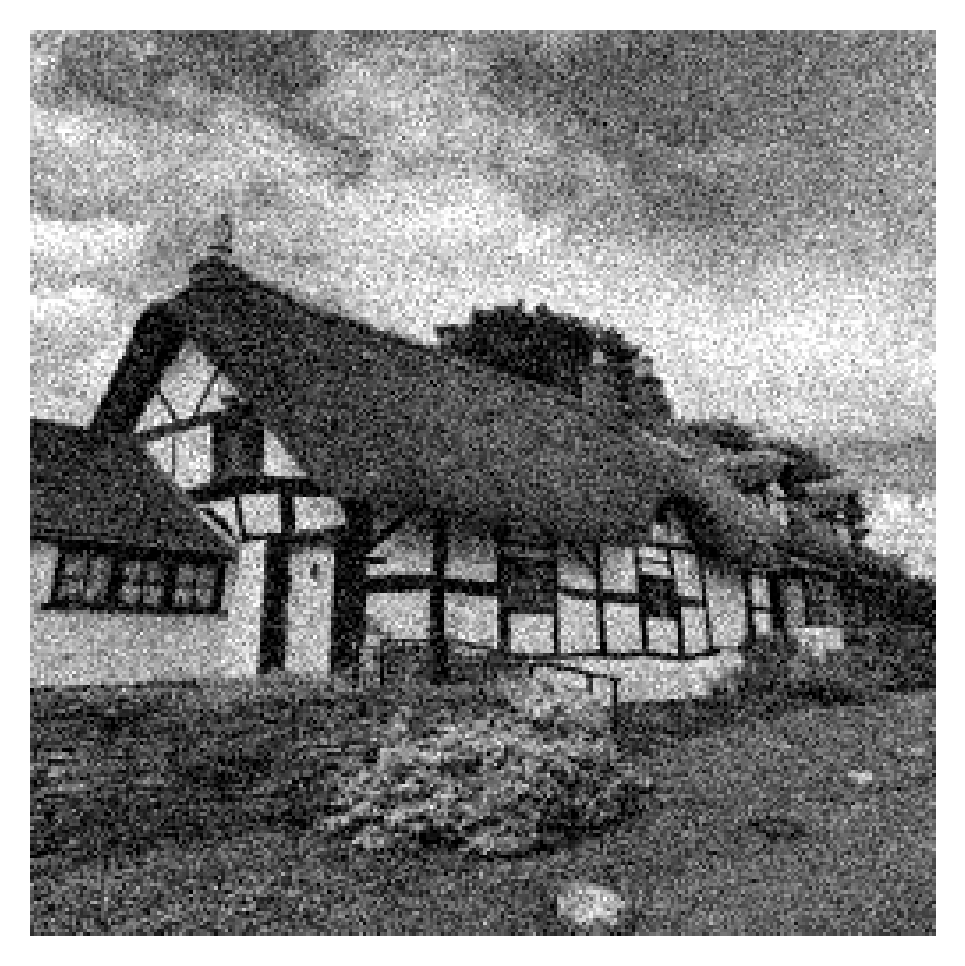}} \hfill \hspace{2pt}
\subfloat[\centering MAID, $\text{PSNR} = 26.90 \ \text{dB}$]{\label{fig:MAID_denoise_TV}\includegraphics[width = 0.24\textwidth]{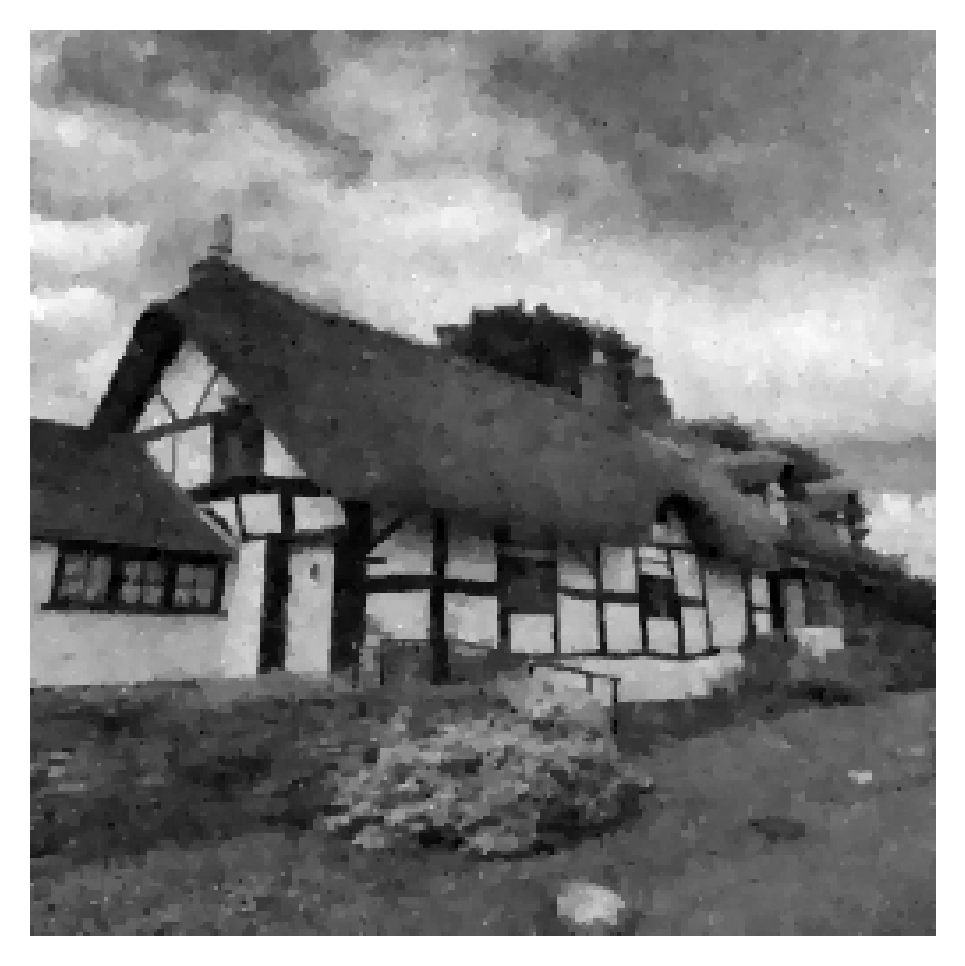}}\hfill \hspace{2pt}
\subfloat[\centering DFO-LS, $\text{PSNR} = 26.71 \ \text{dB}$]{\label{fig:DFO_denoise_TV}\includegraphics[width = 0.24\textwidth]{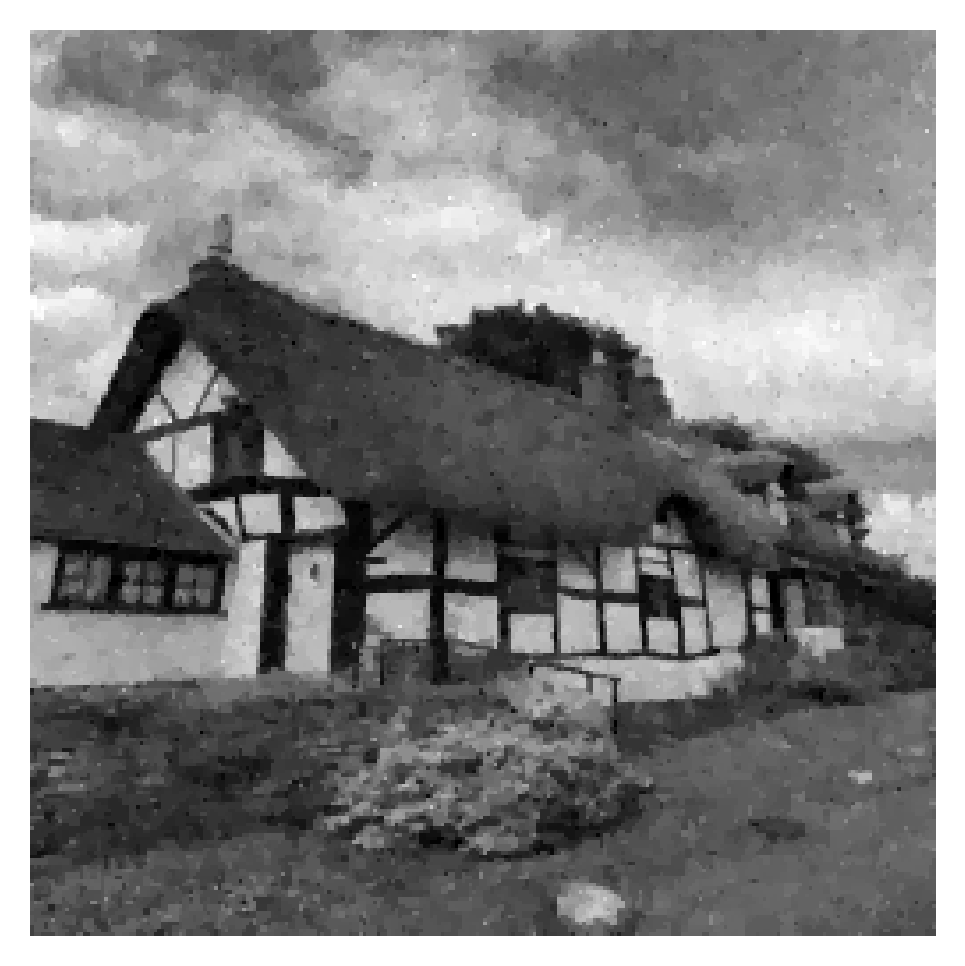}}

\caption{Total variation denoising with learned parameters by MAID ($\epsilon_0 = \delta_0 = 10^{-1}$), dynamic DFO-LS ($\epsilon_0 = 10^{-1}$).}
\label{denoising_TV}
\end{figure}

Figure \ref{denoising_TV} shows the denoising results for both algorithms. It displays the original and noisy image as well as the denoised images obtained using MAID and DFO-LS, respectively. Each denoised image is accompanied by its corresponding peak signal-to-noise ratio (PSNR) value. The learned hyperparameters using MAID result in slightly higher PSNR values than DFO-LS learned hyperparameters on test images, which is consistent with the training results. \rv{This indicates that both MAID and DFO-LS effectively solve \cref{bilevel_TV} with adaptability and robustness to initial accuracy. However, MAID leverages first-order information, resulting in superior performance and scalability.} 

\rv{Using the same setup as in the comparison with DFO-LS illustrated in \cref{denoising_TV}, we solve \cref{bilevel_TV} using MAID and HOAG variants. Given a computational budget of $10^4$, the resulting denoised test image based on parameters learned by MAID and HOAG is shown in \cref{fig:MAID_denoise_TV_HOAG}. As expected from \cref{fig:TV_loss_ll_cg_HOAG}, only the quadratic variant of HOAG achieves comparable results to MAID, while the cubic variant requires more iterations, and the geometric variant remains stalled in a suboptimal solution, potentially requiring further tuning. Thus, \cref{denoising_TV_HOAG} underscores the performance and robustness of MAID over HOAG in this example.}

\begin{figure}[tbhp]
\centering 
\subfloat[\centering \rv{MAID, $\text{PSNR} = 26.90 \ \text{dB}$}]{\label{fig:MAID_denoise_TV_HOAG}\includegraphics[width = 0.24\textwidth]{siamart_220329/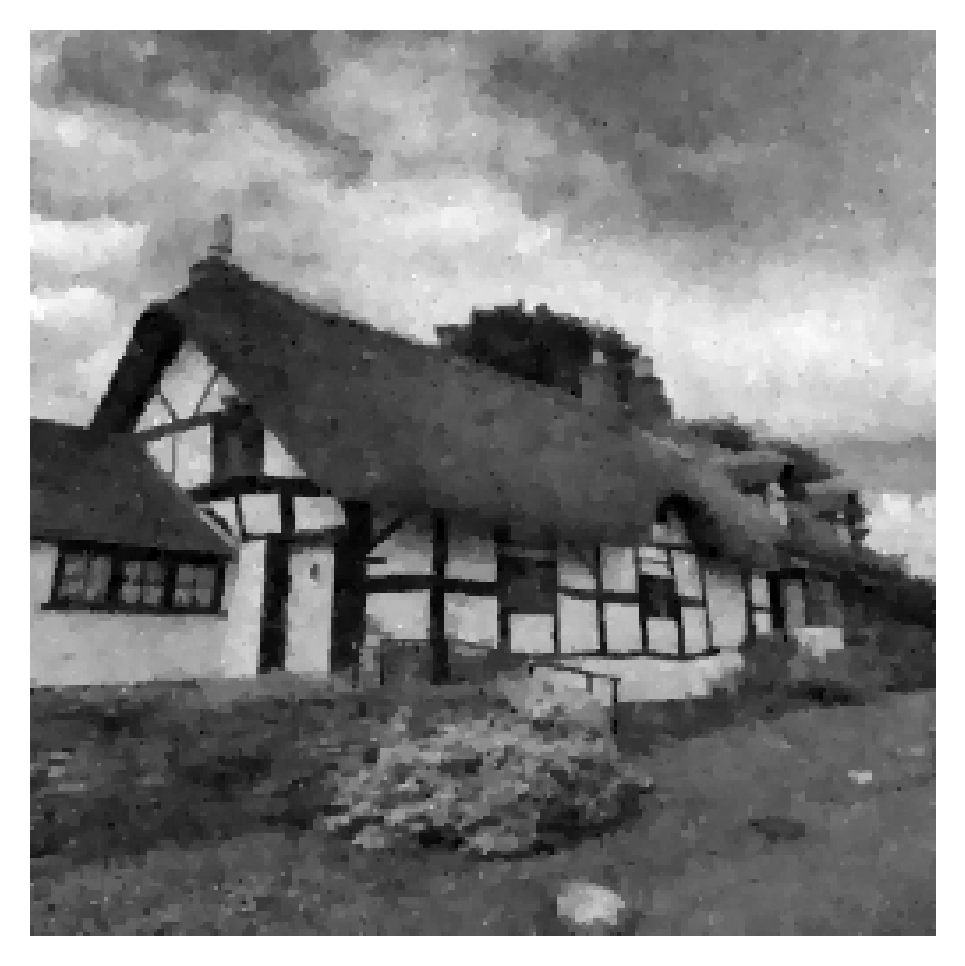}}\hfill \hspace{2pt}
\subfloat[\centering \rv{HOAG quadratic, $\text{PSNR} = 26.90 \ \text{dB}$}]{\label{fig:HOAF_quad_TV}\includegraphics[width = 0.24\textwidth]{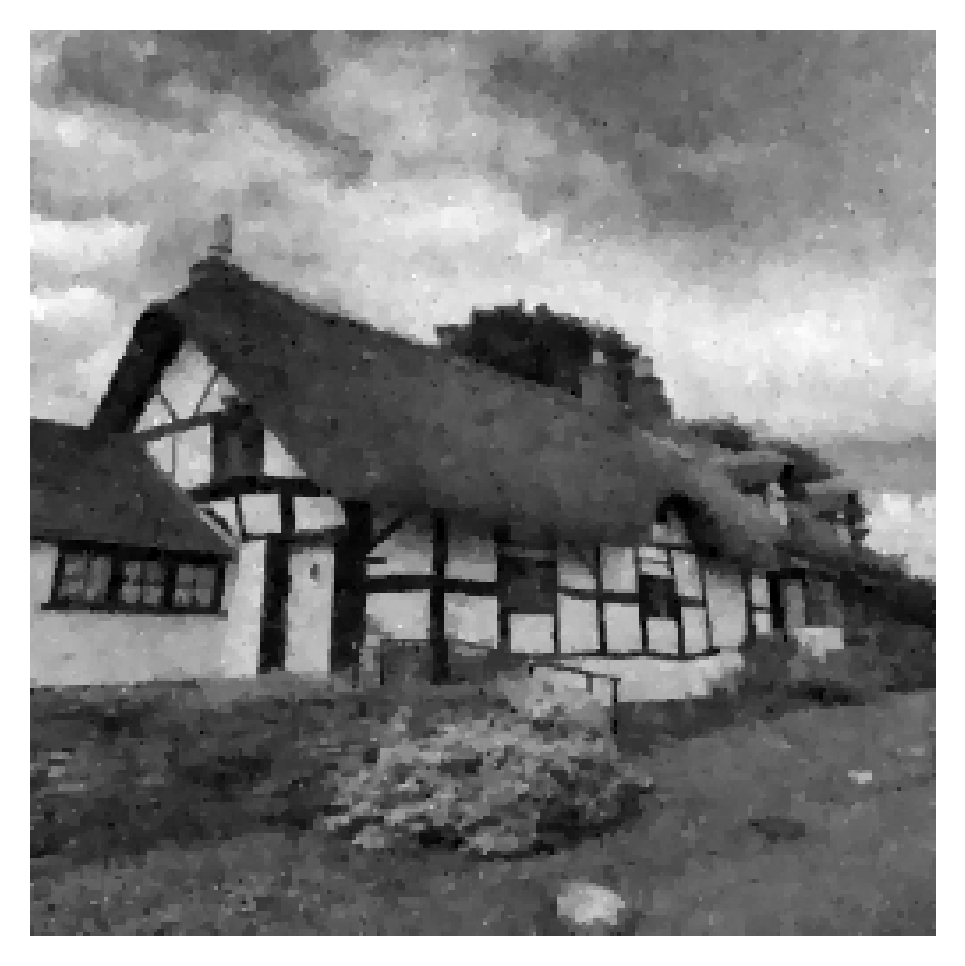}}
\hfill \hspace{2pt}
\subfloat[\centering \rv{HOAG cubic, $\text{PSNR} = 25.17 \ \text{dB}$}]{\label{fig:HOAG_cub_TV}\includegraphics[width = 0.24\textwidth]{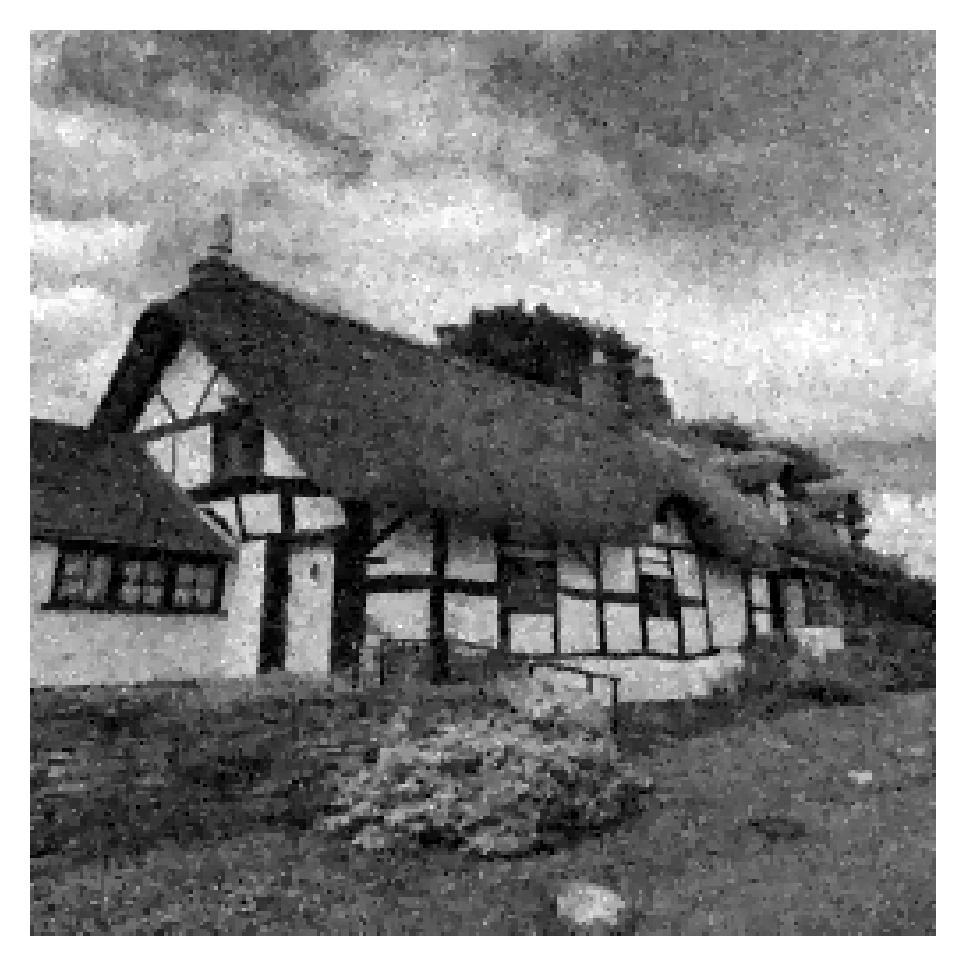}}
\hfill \hspace{2pt}
\subfloat[\centering \rv{HOAG geometric, $\text{PSNR} = 21.77 \ \text{dB}$}]{\label{fig:HOAG_geo_TV}\includegraphics[width = 0.24\textwidth]{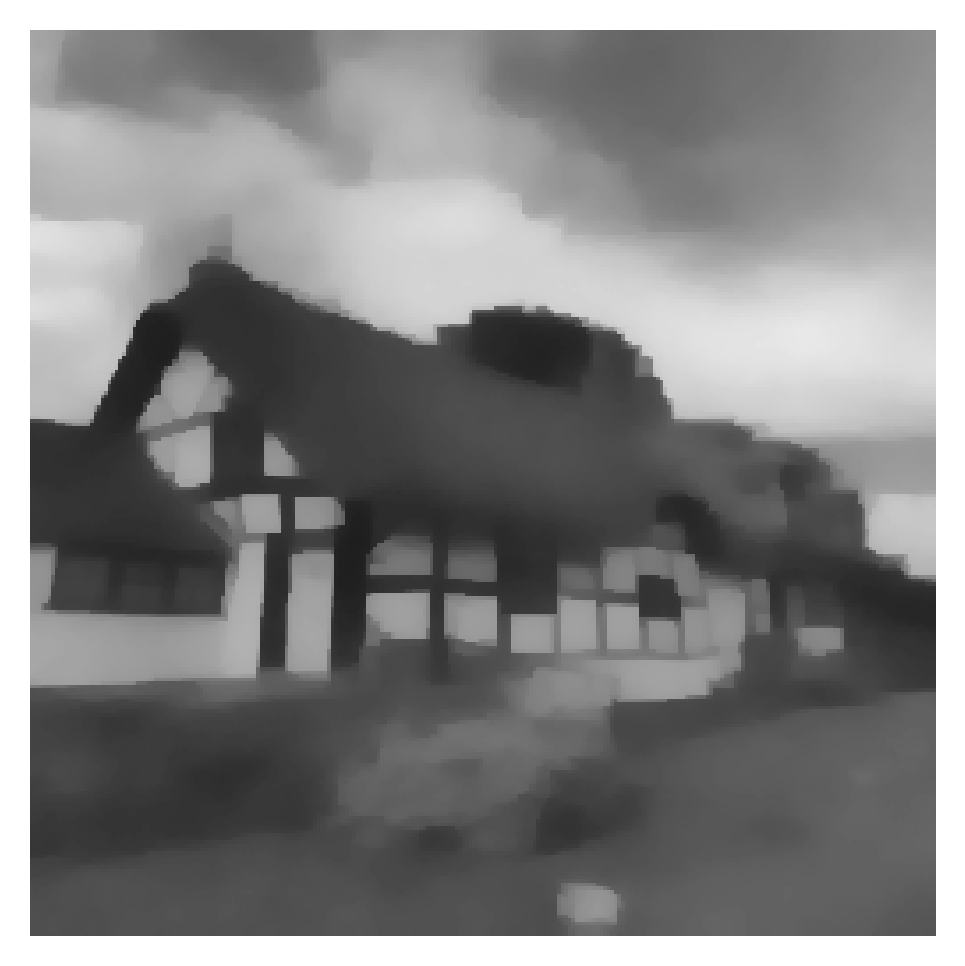}}
\caption{\rv{Total variation denoising with learned parameters by MAID and HOAG ($\epsilon_0 = \delta_0 = 10$) with $10^4$ computational budget.}}
\label{denoising_TV_HOAG}
\end{figure}

\rv{
We now demonstrate MAID’s performance on a problem involving a non-convex upper-level loss. Consider the total variation denoising lower-level problem \eqref{lower_TV} within the bilevel setting \eqref{bilevel_TV}. We replace the upper-level loss in \eqref{upper_TV} with the following smooth, non-convex loss from \cite{Tuckey,Carmon2017ConvexUP}:
\begin{equation}\label{robust_MSE_nonconvex}
    g(x) = \frac{\|x - x^*\|^2}{1 + \|x - x^*\|^2}.
\end{equation}
This results in the modified upper-level problem below:
\begin{equation}\label{upper_robust_MSE_nonconvex}
    \min_\theta \frac{1}{m}\sum_{t = 1}^m \frac{\|x_t - x^*_t\|^2}{1 + \|x_t - x^*_t\|^2}.
\end{equation}
Using the same setting as in \eqref{bilevel_TV} and applying \eqref{psi} in \cref{HOAH_back_new}, we solve this problem. \cref{fig:TV_robust_step_eps_loss} illustrates MAID’s convergence behavior and robustness to the initial accuracy when solving the bilevel problem with the non-convex upper-level loss \eqref{robust_MSE_nonconvex}.}
\begin{figure}[tbhp]
\centering \subfloat[\footnotesize \rv{Accuracy of the lower-level solver in each upper-level iteration}]{\label{fig:TV_robust_eps}\includegraphics[width = 0.4\textwidth]{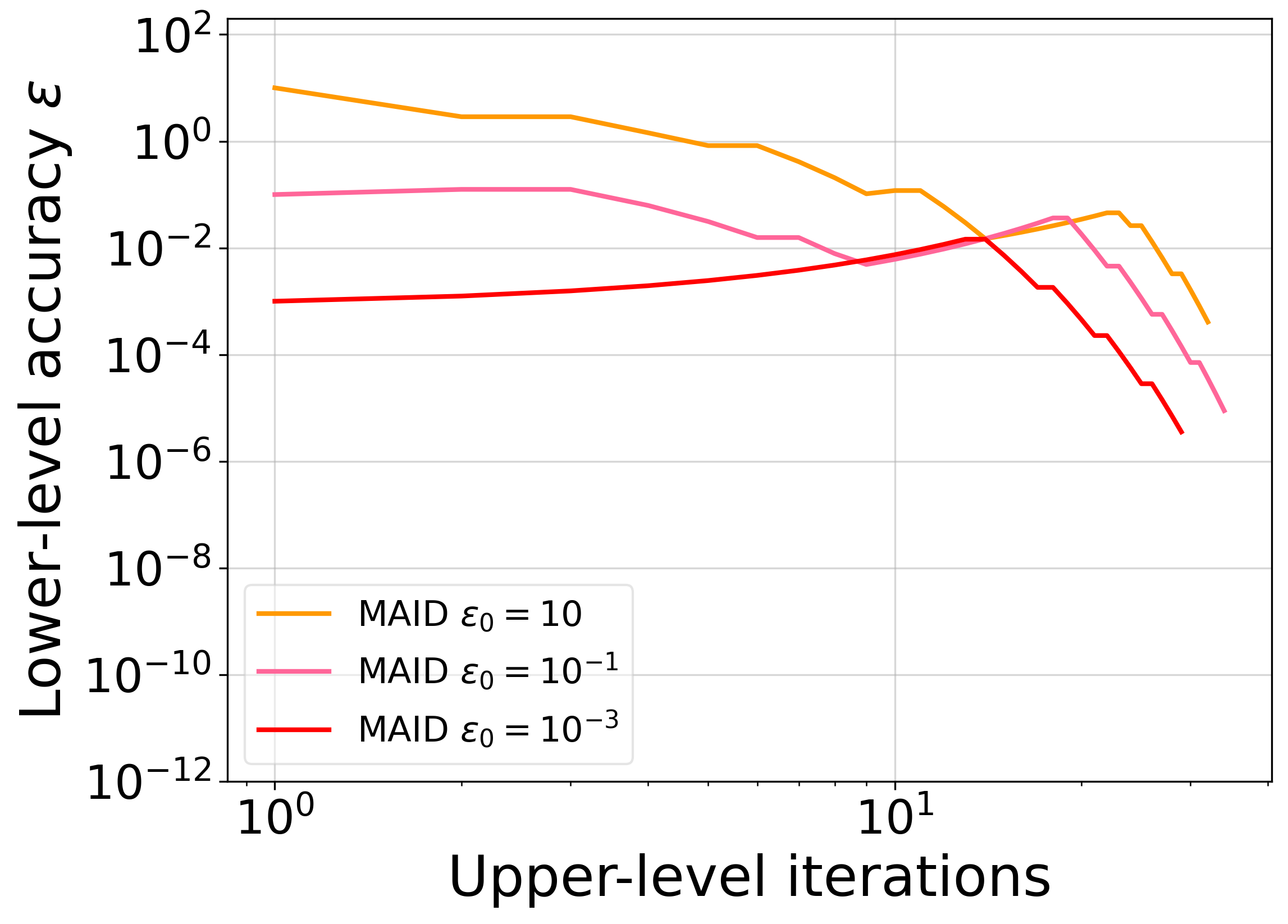}}\hspace{10pt}
\subfloat[\footnotesize \rv{Upper-level loss vs total lower-level computational cost}]{\label{fig:TV_robust_loss_ll_cg}\includegraphics[width = 0.4\textwidth]{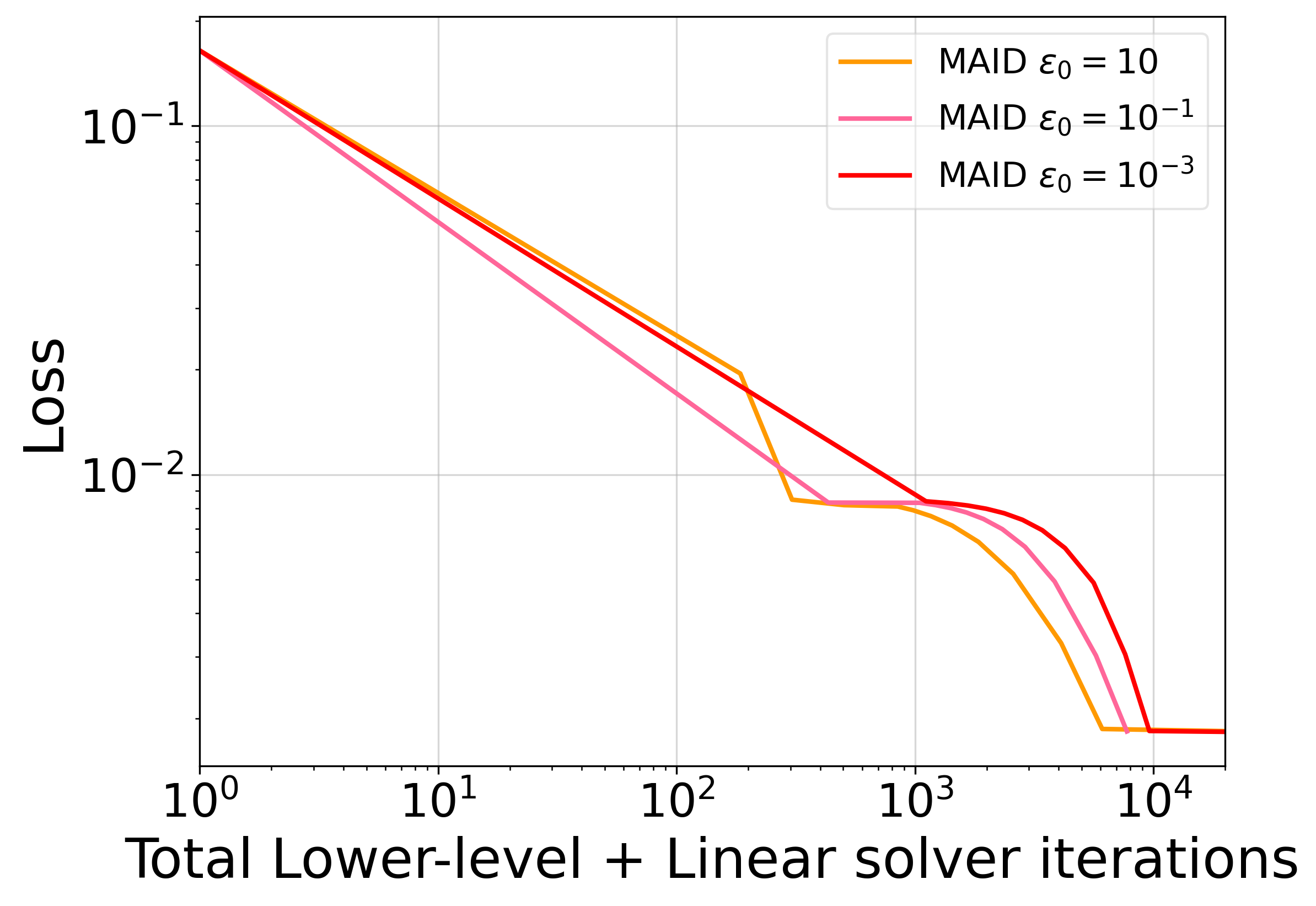}}
\caption{\rv{Values of $\epsilon$, and upper-level loss per total lower-level computational cost in MAID for learning parameters in the ROF model \eqref{lower_TV} with the non-convex upper-level loss \eqref{robust_MSE_nonconvex} and corresponding upper-level problem \eqref{upper_robust_MSE_nonconvex}. In various MAID settings, $\epsilon_0 = \delta_0$ is set.}}
\label{fig:TV_robust_step_eps_loss}
\end{figure}

\subsection{Field of experts denoising}\label{subsec:FoE}
In this part, we explore image denoising using a more data-adaptive regularizer known as the Field of Experts (FoE) approach. This approach was previously employed in a bilevel setting in \cite{Yunjin_Chen_2014} to learn the parameters of the regularizer in the variational setting. However, their choice of regularizer was non-convex and does not align with our assumptions. Therefore, we adopt the convex case, as described in \cite{Crockett_2022}, resulting in the following bilevel problem:
\begin{subequations}\label{FoE}
\begin{align}
&\min_\theta \frac{1}{m} \sum_{i=1}^{m} \frac{1}{2}\|\hat{x}_i(\theta) - x^*_t \|^2\\
&s.t. \quad \hat{x}_i(\theta) = \arg \min_x \left\{ \frac{1}{2}\|x-y_i\|^2 + e^{\theta[0]} \sum_{k=1}^K e^{\theta[k]} \| c_k \ast x \|_{\theta[{K+k}] } \right\}, \quad i = 1,\dots, m,
\end{align}
\end{subequations}
where each $c_k$ is a $2D$ convolution kernel. We set $K = 30$ and take each $c_k$ as a $7 \times 7$ kernel with elements $[\theta[{2K+ 49(k-1)+1}], \dots,\theta[{2K+ 49 k}]]$. Also, $\|x\|_{\upsilon} = \sum_{j = 1}^n \sqrt{x_j^2+\upsilon^2}$ represents the smoothed $1$-norm with smoothing parameter $\upsilon$ for any $x\in \mathbb{R}^n$. So, this problem has $1531$ hyperparameters that we represent by $\theta$. We initialize the kernels by drawing randomly from a Gaussian distribution with mean $0$ and standard deviation $1$, and we set the initial weights to $e^{-3}$. During training, similar to TV denoising, we use 25 images from the Kodak dataset and rescale them to $96\times 96$ pixels, converting them to grayscale. Additionally, we add Gaussian noise with $\mathcal{N}(0, \sigma^2)$, where $\sigma = 0.1$, to the ground truth images. \rv{Additionally, as in the previous experiment, we set the MAID hyperparameter $\overline{\rho} = \frac{10}{9}$, $\underline{\rho} = 0.5$, $\overline{\nu} = 1.25$, and $\underline{\nu} = 0.5$. } 

\begin{figure}[tbhp]
\centering 
\subfloat[\footnotesize Accuracy of the lower-level solver in each upper-level iteration]{\label{fig:FoE_eps}\includegraphics[width = 0.4\textwidth]{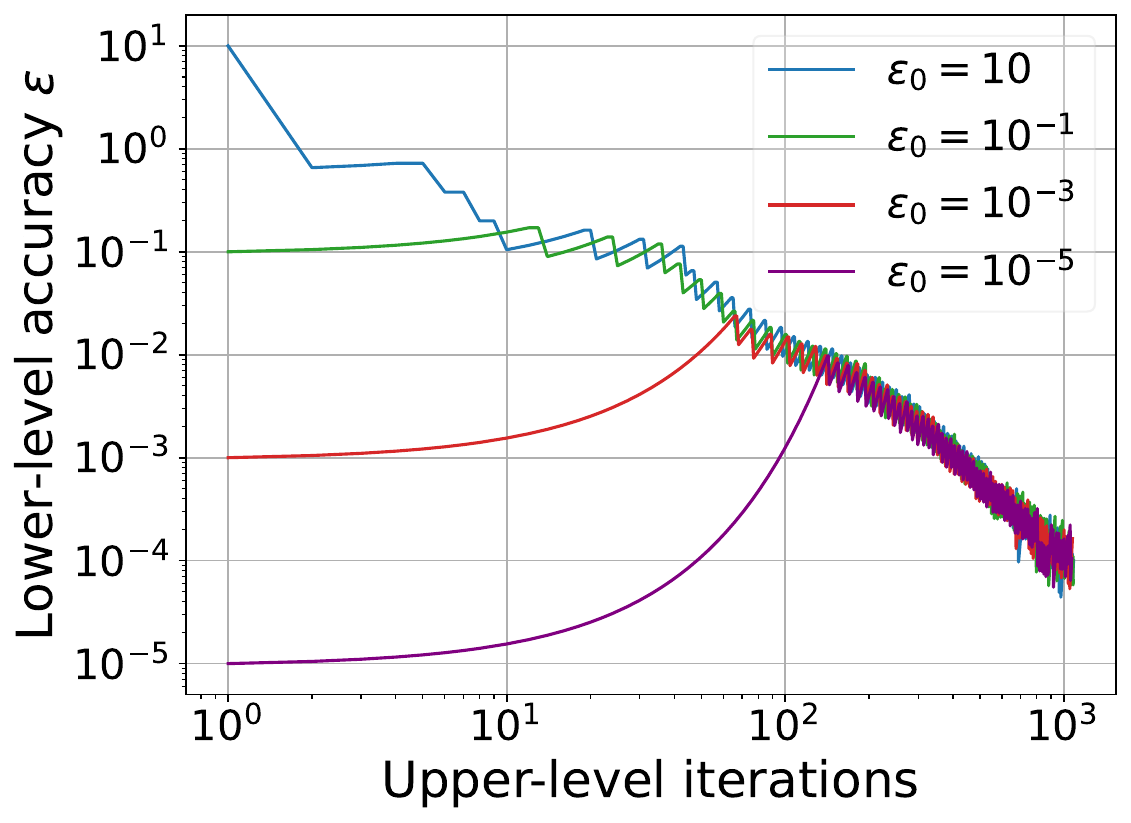}}\hspace{10pt}
\subfloat[\footnotesize Upper-level loss vs total lower-level computational cost]{\label{fig:FoE_LL}\includegraphics[width = 0.4\textwidth]{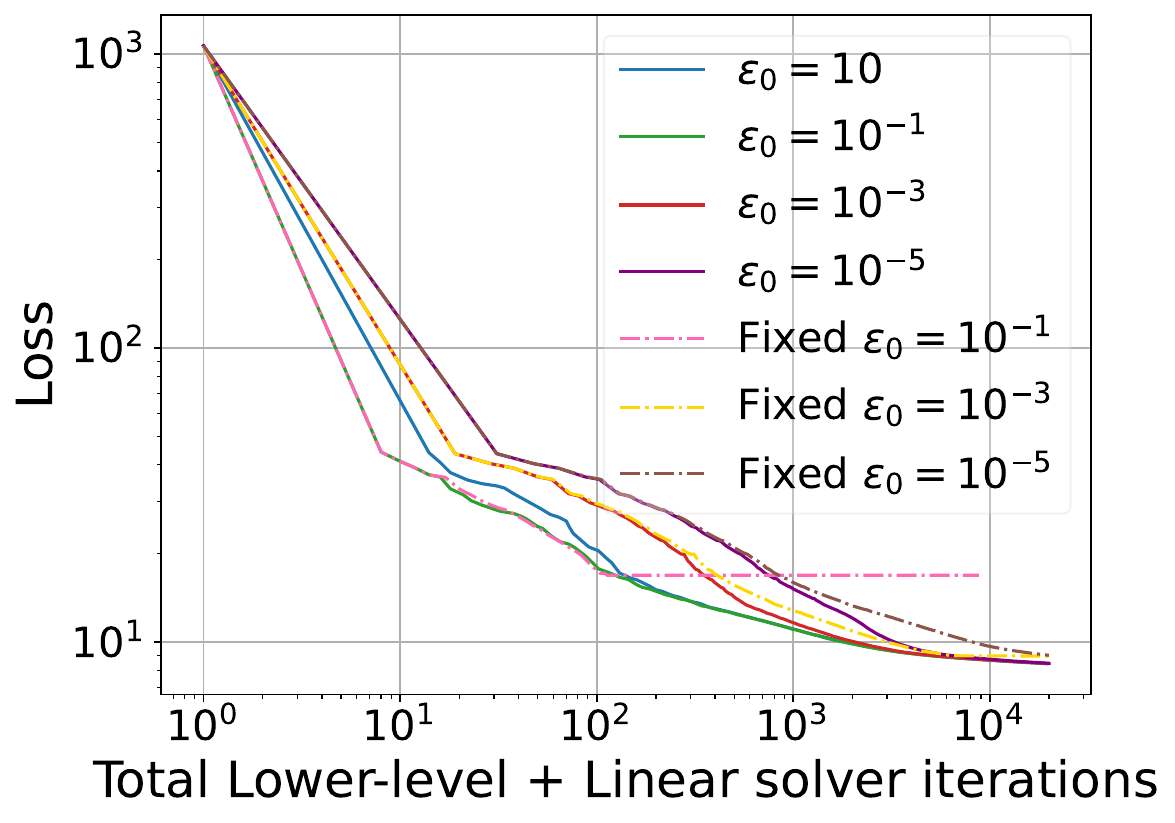}}
\caption{Investigating the impact of MAID configurations on the accuracy of the lower-level solver, and loss at each lower-level computation unit with varied $\epsilon_0 = \delta_0$ for solving the problem \eqref{FoE}.}
\label{fig:FoE_step_eps_LL}
\end{figure}
As seen in \cref{fig:FoE_eps}, by dynamically adjusting the accuracy within the framework of MAID, After a few upper-level iterations in various settings, the algorithm ultimately selects the same range of accuracies. Akin to the patterns observed in previous numerical experiments, choosing a too-large initial accuracy $\epsilon_0 = 10$ in this case prompts the algorithm to undergo reductions and recalculations to find a descent direction, resulting in higher computational costs initially. However, after sufficiently reducing $\epsilon$, it manages to make progress with a reasonable computational cost. 

The advantage of adaptive accuracy in MAID becomes evident when comparing it to fixed accuracies. Using a low fixed accuracy $\epsilon_0 = 10^{-1}$ along with the line search mechanism of MAID, once can see a rapid progress initially, as indicated by the pink dashed curve, but it reaches a plateau after a few upper-level iterations. Furthermore, it does not use the full lower-level budget because the upper-level progress stalls at some point, and due to warm-starting, the lower-level solution is accurate enough and does not require many more iterations. A medium fixed accuracy, such as $10^{-3}$, performs comparably to adaptive cases; however, it cannot be known a priori and must be chosen experimentally. A high fixed accuracy, like $\epsilon_0 = 10^{-5}$, appears sufficiently small but exhausts the lower-level solver and runs out of budget while achieving higher loss. All adaptive instances demonstrate enhanced overall efficiency compared to their fixed counterparts, as showcased in \cref{fig:FoE_LL}.

\rv{To compare MAID with HOAG, a scalable inexact first-order algorithm for solving \cref{FoE}, we ran HOAG with geometric, quadratic, and cubic accuracy sequences under the same settings, with $\epsilon_0 = \delta_0 = 10^{-1}$. As illustrated in \cref{fig:FoE_eps_HOAG}, MAID adaptively selected larger $\epsilon$ values while maintaining steady progress in \cref{fig:FoE_LL_HOAG}. For any budget exceeding $10^3$, MAID outperforms HOAG. Different HOAG variants vary in performance and likely require further tuning, while MAID remains robust to accuracy choices. The best-performing HOAG variant, quadratic, fluctuates significantly after $2\times 10^{4}$ computations, whereas MAID exhibits a steady, monotonic behavior. Overall, \cref{fig:FoE_eps_LL_HOAG} highlights MAID’s superior performance and stability in this experiment.}

\begin{figure}[tbhp]
\centering 
\subfloat[\footnotesize Accuracy of the lower-level solver in each upper-level iteration]{\label{fig:FoE_eps_HOAG}\includegraphics[width = 0.4\textwidth]{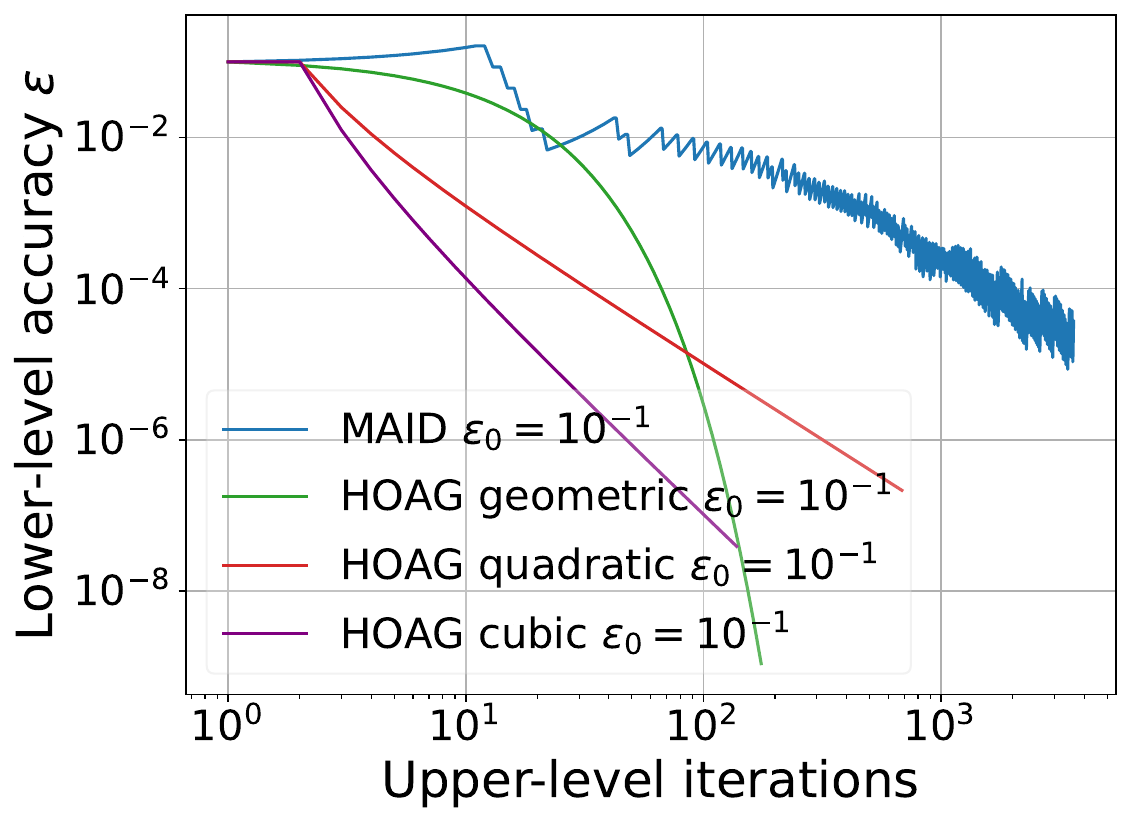}}\hspace{10pt}
\subfloat[\footnotesize Upper-level loss vs total lower-level computational cost]{\label{fig:FoE_LL_HOAG}\includegraphics[width = 0.4\textwidth]{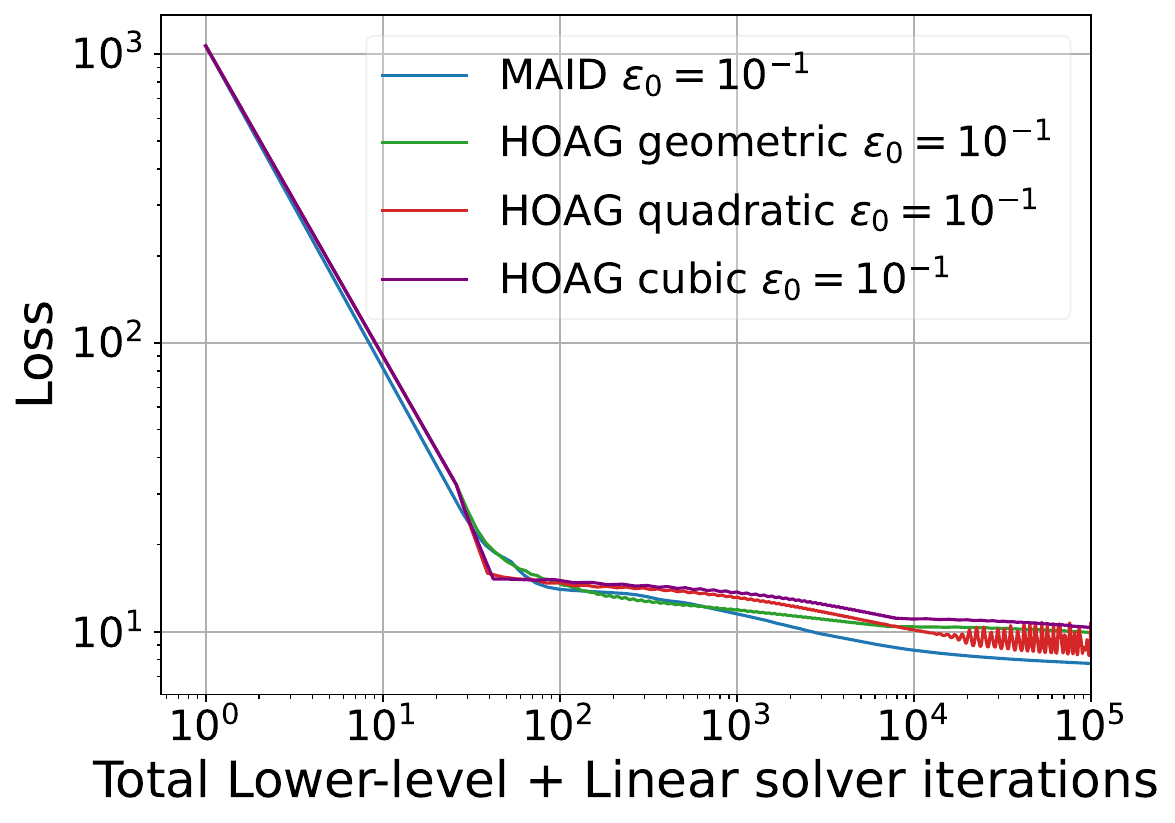}}
\caption{\rv{Comparison of MAID and HOAG on the lower-level solver accuracy and loss per lower-level computation unit, with initial settings $\epsilon_0 = \delta_0 = 10^{-1}$ for solving problem \eqref{FoE}.}}
\label{fig:FoE_eps_LL_HOAG}
\end{figure}
Noting the importance of the problem \eqref{FoE} as a large-scale and data-adaptive problem, to evaluate the learned parameters, we utilized the same set of $20$ images used in the total variation problem, each of size $256\times 256$. Additionally, we employed $48$ filters of size $7\times 7$, initialized with zero-mean random filters. After training using MAID \rv{ and variation of HOAG} with $\epsilon_0 = \delta_0 = 10^{-1}$, we evaluated the learned model on a noisy test image of size $256 \times 256$ pixels with Gaussian noise of $\sigma = 0.1$, as depicted in \cref{fig:Original_FOE} and \cref{fig:Noisy_FOE} respectively.

The denoised image obtained using parameters learned by MAID is shown in \cref{fig:MAID_denoise_FOE}, achieving a PSNR of 29.73 dB. \rv{ In comparison, the best-performing variant of HOAG in this experiment—the quadratic variant—produces a denoised image with a PSNR of 28.79 dB, as shown in \cref{fig:MAID_denoise_FOE_quad}. The quality drops to 28.03 dB and 27.96 dB when using the geometric and cubic variants of HOAG, respectively, as shown in \cref{fig:HOAG_denoise_FOE_geo} and \cref{fig:HOAG_denoise_FOE_cub}. In contrast, MAID maintains robust performance without requiring fine-tuning or prior knowledge of accuracy.}

\begin{figure}[tbhp]
\centering 
\subfloat[\footnotesize Ground truth]{\label{fig:Original_FOE}\includegraphics[width = 0.25\textwidth]{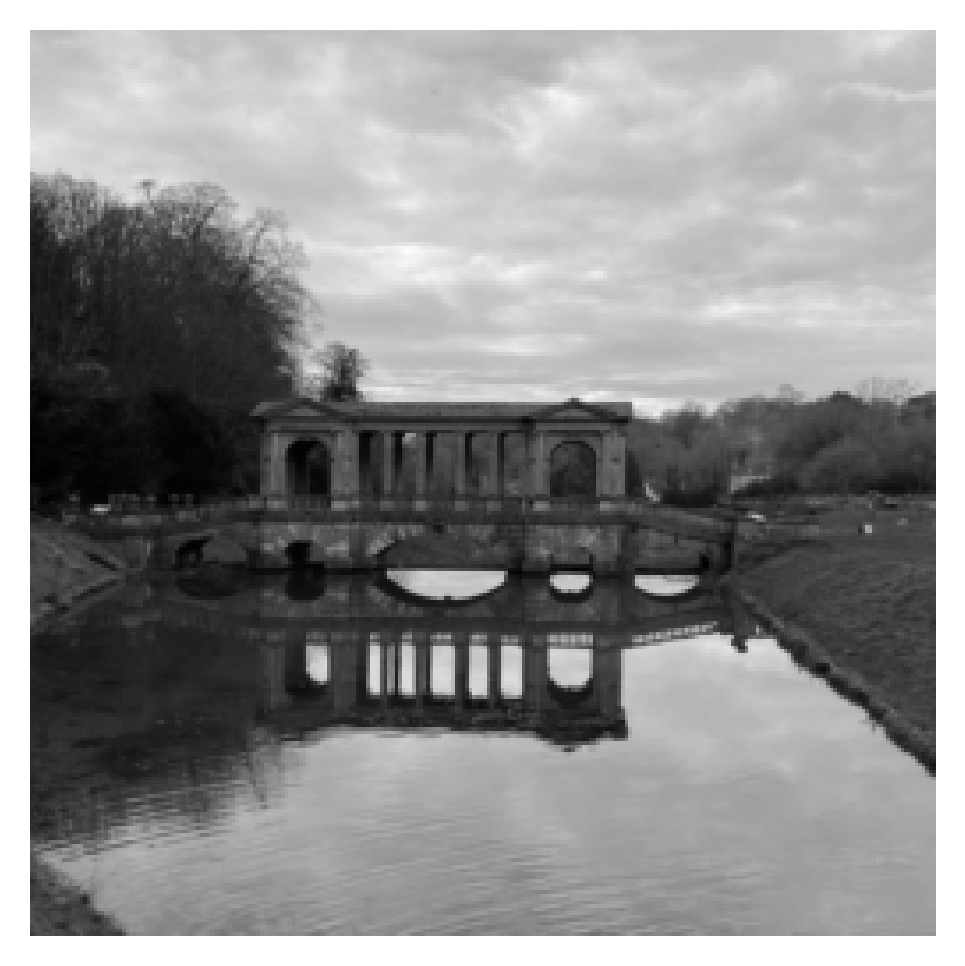}}\hfill \hspace{2pt}
\subfloat[\centering \footnotesize Noisy image, $\text{PSNR} = 20.28 \ \text{dB}$]{\label{fig:Noisy_FOE}\includegraphics[width = 0.25\textwidth]{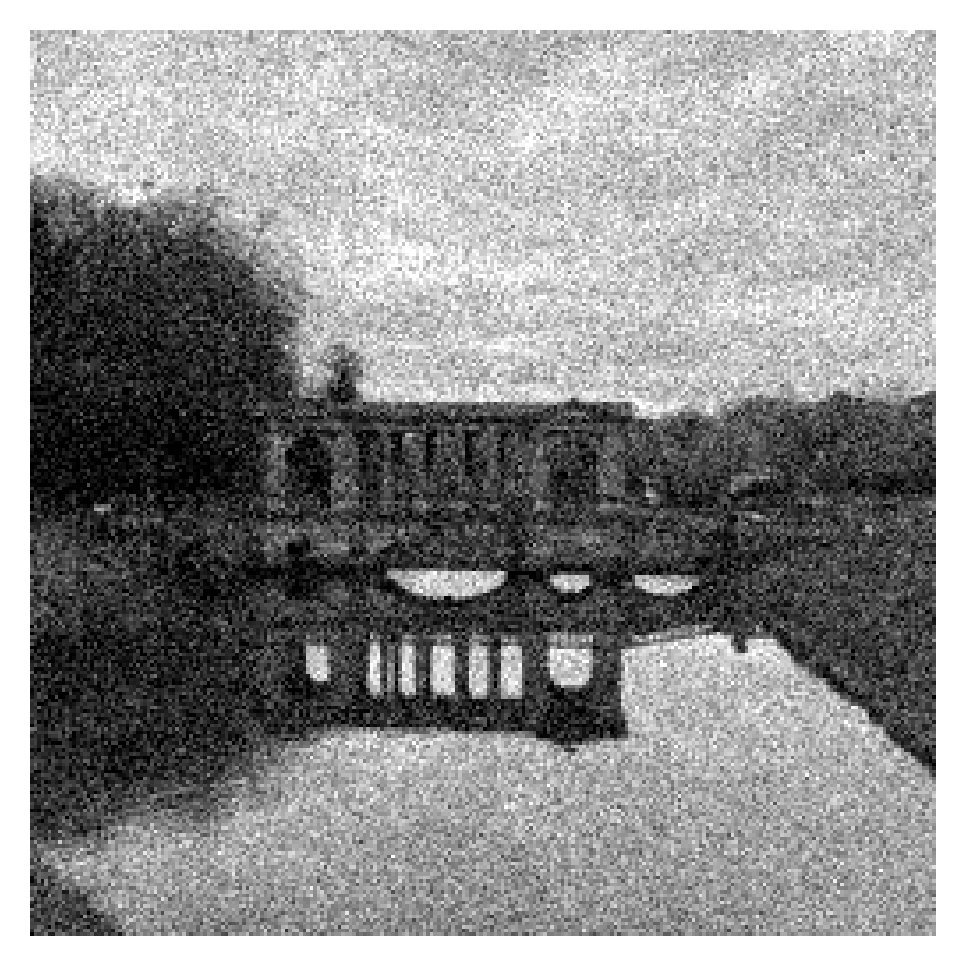}} \hfill \hspace{2pt}
\subfloat[\centering \footnotesize MAID, $\text{PSNR} = 29.73 \ \text{dB}$]{\label{fig:MAID_denoise_FOE}\includegraphics[width = 0.25\textwidth]{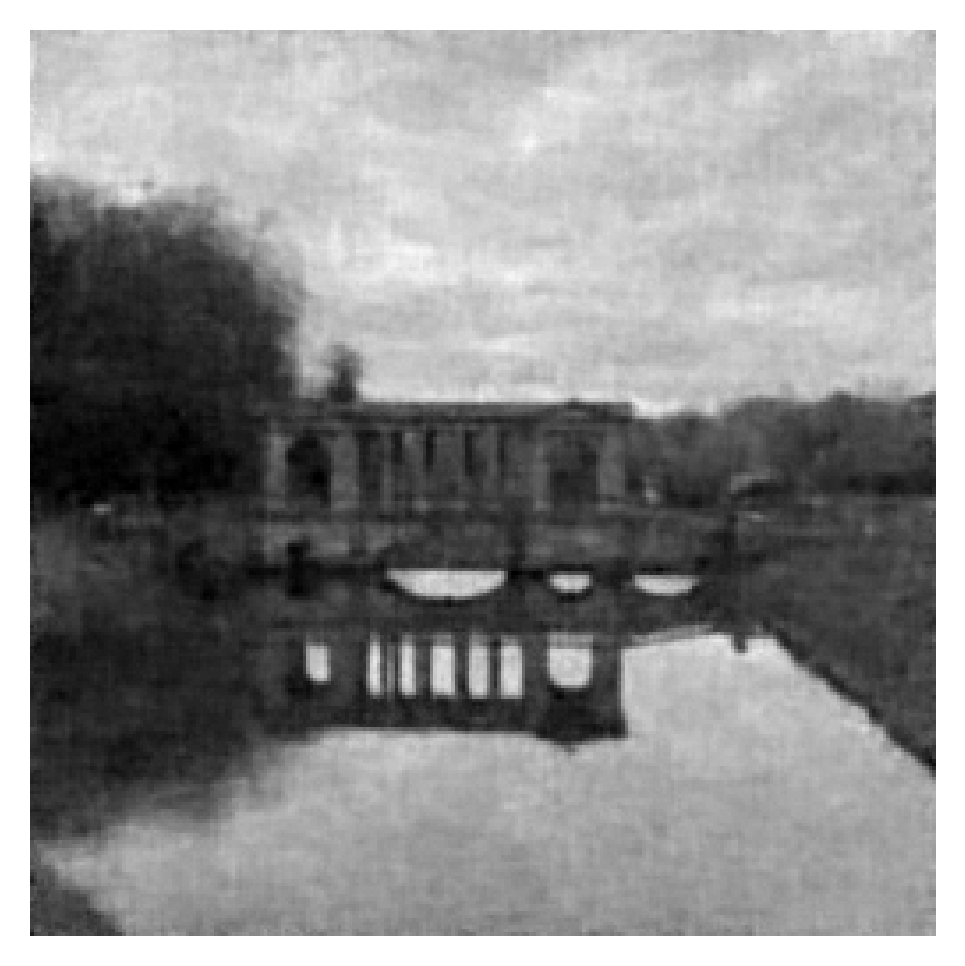}}  
 \vfill \hspace{2pt}
\subfloat[\centering \footnotesize \rv{HOAG quadratic, $\text{PSNR} = 28.79 \ \text{dB}$}]{\label{fig:MAID_denoise_FOE_quad}\includegraphics[width = 0.25\textwidth]{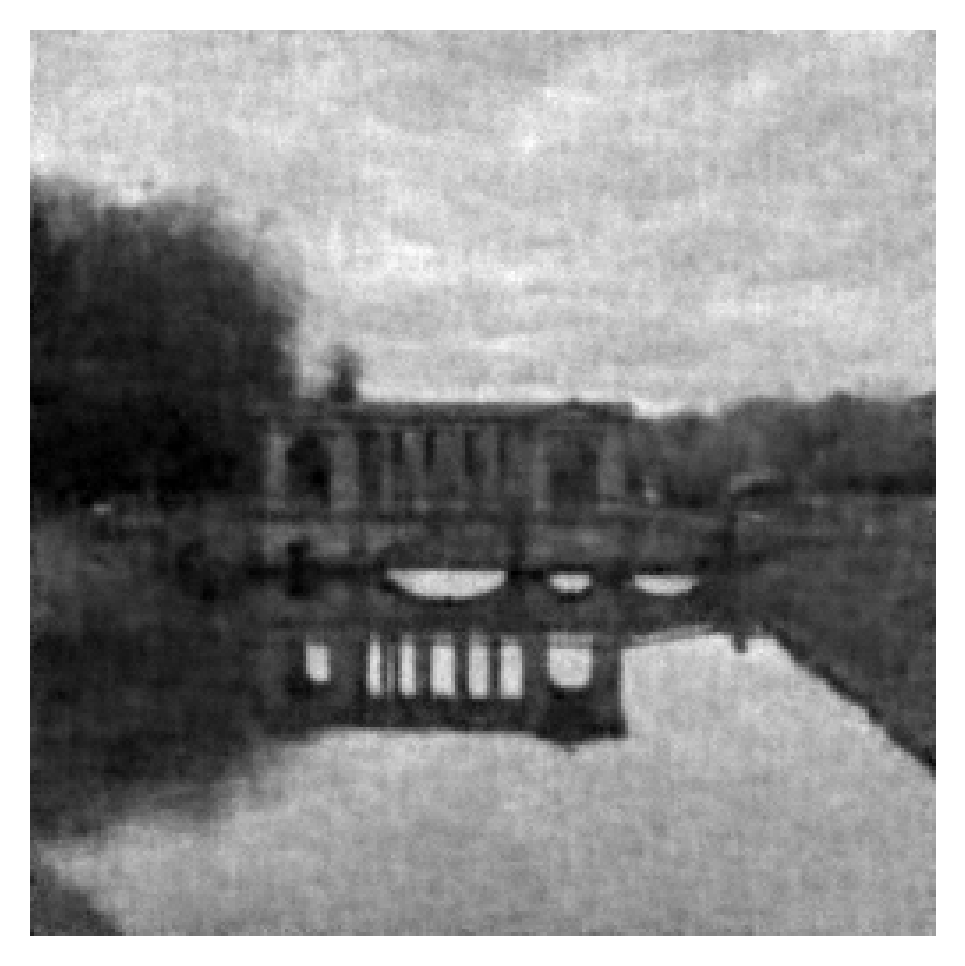}}
\hfill \hspace{2pt}
\subfloat[\centering \footnotesize \rv{HOAG geometric, $\text{PSNR} = 28.03 \ \text{dB}$}]{\label{fig:HOAG_denoise_FOE_geo}\includegraphics[width = 0.25\textwidth]{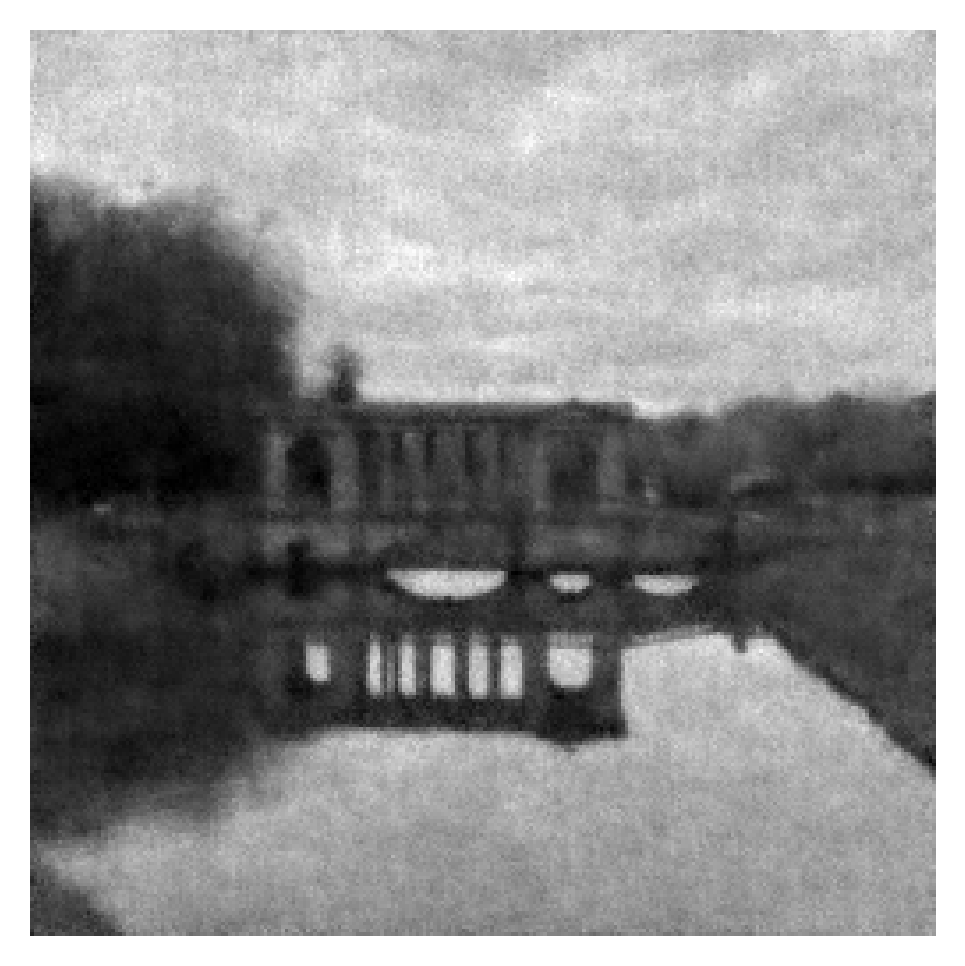}
}
\hfill \hspace{2pt}
\subfloat[\centering \footnotesize \rv{HOAG cubic, $\text{PSNR} = 27.96 \ \text{dB}$}]{\label{fig:HOAG_denoise_FOE_cub}\includegraphics[width = 0.25\textwidth]{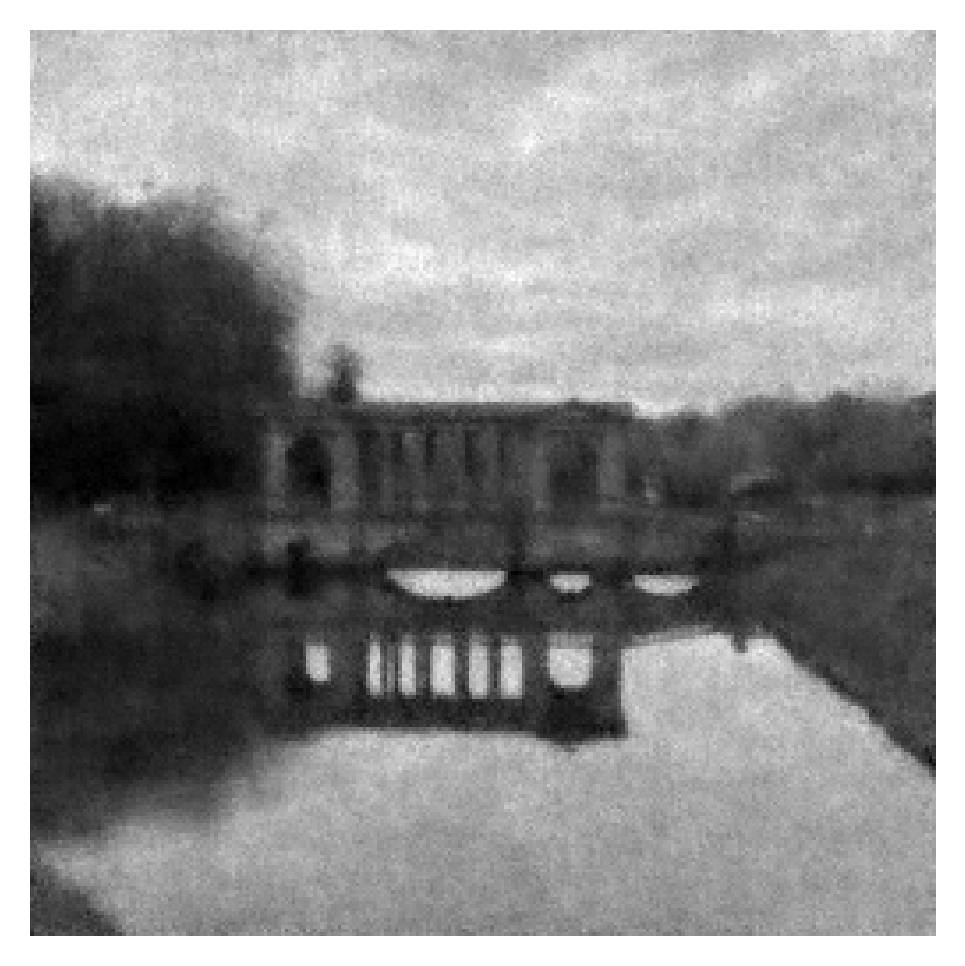}}
\caption{Field of experts denoising with learned parameters utilizing MAID \rv{in comparison with HOAG}.}
\label{denoising_FoE}
\end{figure}
\subsection{Runtime comparison}
\rvv{In our numerical results, the lower-level computational cost includes iterations of the lower-level solver as well as the matrix-vector products in the linear solver and power method involved in computing the bound \eqref{AP_bound_hg}. These were considered the primary computational costs, as CPU time can vary across problems and depends heavily on implementation. To support this choice, we present a comparison of CPU times for the lower-level solver, the matrix-vector product (linear solver plus power method), the computation of the bound \eqref{AP_bound_hg} for checking a descent direction, and the line search condition \eqref{psi} in \cref{inexact_bt_lemma} across all experiments in \cref{table:time_experiments}. The results show that, as expected, the majority of CPU time per upper-level iteration is spent on lower-level iterations and matrix-vector product.}

\rv{In addition to these results, note that, in terms of computational complexity, the extra calculations of $\nabla_{x\theta}^2h(x(\theta), \theta)$ on the right-hand side of \eqref{AP_bound_hg} (for example, when approximating $\|\nabla_{x\theta}^2h(x(\theta), \theta)\|$ using one step of the power method) are approximately twice as costly as computing a single gradient  $\nabla_x h(x, \theta)$, assuming the dimension  $d$  of the parameter  $\theta$  is comparable to the dimension $n$ of the input $x$. This is because the calculations are performed in a matrix-vector product manner, resulting in roughly two additional lower-level gradient computations per upper-level iteration.} \rvv{Moreover, we found the choice of using only one step of the power method to be numerically effective when computing the operator norm in \cref{AP_bound_hg}. Increasing the number of power method iterations did not change the behavior of the algorithm but led to higher computational costs than necessary.}
\begin{table}[htbp]
    \centering
    \setlength{\tabcolsep}{4pt} %
    \begin{tabular}{|c|c|c|c|c|}
        \hline
        \textbf{\rvv{Experiment}} & 
        \textbf{\rvv{FISTA}} &
        \textbf{\rvv{Matrix-vector}} & 
        \textbf{\rvv{Bound \eqref{AP_bound_hg}}} & 
        \textbf{\rvv{Line search \eqref{psi}}} \\ 
        \hline
        \rvv{Quadratic Section \ref{subsec:quad}} & \rvv{$7.8 \times 10^{-2}$} & \rvv{$7.1 \times 10^{-2}$} & \rvv{$6.9 \times 10^{-5}$} & \rvv{$5.8 \times 10^{-5}$} \\ 
        \hline
        \rvv{Regression Section \ref{subsec:regression}} & \rvv{$1.2 \times 10^1$} & \rvv{$2.9 \times 10^0$} & \rvv{$2.3 \times 10^{-2}$} & \rvv{$2.4 \times 10^{-3}$} \\ 
        \hline
        \rvv{TV denoising Section \ref{subsec:TV}} & \rvv{$6.2 \times 10^0$} & \rvv{$1.5 \times 10^{-1}$} & \rvv{$2.6 \times 10^{-4}$} & \rvv{$1.4 \times 10^{-4}$} \\ 
        \hline
        \rvv{FoE denoising Section \ref{subsec:FoE}} & \rvv{$1.8 \times 10^1$} & \rvv{$4.0 \times 10^0$} & \rvv{$2.8 \times 10^{-4}$} & \rvv{$2.4 \times 10^{-4}$} \\ 
        \hline
    \end{tabular}
    \caption{\rvv{Average CPU time (seconds) for total lower-level computational cost (lower-level solver (FISTA) iterations + matrix-vector products), descent direction check (evaluating bound \eqref{AP_bound_hg}), and backtracking line search per upper-level iteration of MAID. All experiments use the settings from \cref{sec:numerical_experiments} with $\epsilon_0 = \delta_0 = 10^{-1}$.}}
    \label{table:time_experiments}
\end{table}
\section{Conclusions and future work}\label{sec:future}
\rv{
In this paper, we proposed the MAID algorithm, along with its convergence analysis and numerical evaluation. For future work, it would be valuable to explore the convergence rate of MAID in addition to the asymptotic convergence results presented here.
One promising direction is to study non-monotone line search techniques \cite{nonmonotone_classic, non_monotone_stochastic_linesearch} as an extension to the current analysis, which is based on an Armijo-type monotone line search.
Additionally, given the success of stochastic line search methods \cite{Painless_Stochastic_linesearch, non_monotone_stochastic_linesearch, LineSearchNoise_stochastic}, investigating stochastic variants of MAID could enable scalability to large datasets and potentially provide speed-ups.}

\rv{
A future direction could involve extending the analysis of our method to accommodate potentially non-convex lower-level functions utilizing the \textit{value function} approach \cite{dempe_bilevel_2020}. Additionally, comparing fully first-order methods \cite{bome,fullyFirstOrderStochastic,penaltyMethodsNonconvexBO} with inexact IFT+CG-based approaches, as well as exploring ways to combine the adaptivity and robustness of MAID with fully first-order methods to leverage their avoidance of second-order computations, presents a promising avenue for further study.}

MAID is the first algorithm which actively exploits the fundamental trade-off in the accuracy and computational cost in estimating function values and gradients in bilevel learning problems. Rather than a-priori selected, MAID selects accuracies for both function values and gradients following the overarching paradigm to be as low accurate as possible to save computational cost but as accurate as needed to guarantee global convergence to critical points. A key novelty is that MAID performs backtracking using inexact computations to assure monotonic descent.
Our numerical results underscore the superiority of MAID over state-of-the-art derivative-free and first-order methods for bilevel learning across a range of problems in data science. Importantly, it demonstrates remarkable robustness in the face of hyperparameters such as the initial accuracy and starting step size choices, alleviating the need for tuning these algorithm parameters specifically for each application. 


\section*{Acknowledgments}
This research made use of Hex, the GPU Cloud in the Department of Computer Science at the University of Bath.

\bibliographystyle{siamplain}
\bibliography{references}
\end{document}